\newtheorem{theorem}{Theorem}[section]
\newtheorem{proposition}[theorem]{Proposition}
\newtheorem{definition}[theorem]{Definition}
\newcommand{\RR}{\mathbb{R}}
\newcommand{\NN}{\mathbb{N}}
\newcommand{\F}{\mathcal{F}}
\newcommand{\G}{\mathcal{G}}
\newcommand{\E}{\mathcal{E}}
\newcommand{\C}{\mathcal{C}}
\def\signff{\bigskip\bigskip\hspace{80mm}
\vbox{{\sc Francis Filbet\par\vspace{3mm}
Universit\'e de Lyon,\par
UL1, INSAL, ECL, CNRS \par 
UMR5208, Institut Camille Jordan,\par
43 boulevard 11 novembre 1918,\par
F-69622 Villeurbanne cedex,  FRANCE
\par\vspace{3mm}e-mail:} filbet@math.univ-lyon1.fr }}
\def\signcn{\bigskip\bigskip\hspace{80mm}
\vbox{{\sc Claudia Negulescu\par\vspace{3mm}
Universit\'e de Provence,\par
39, rue Joliot Curie, \par 
13453 Marseille Cedex, FRANCE
\par\vspace{3mm}e-mail:} claudia.negulescu@cmi.univ-mrs.fr  }}
\def\signcy{\bigskip\bigskip\hspace{80mm}
\vbox{{\sc Chang Yang \par\vspace{3mm}
Laboratoire Paul Painlev\'e U.M.R CNRS 8524, \par 
Universit\'e Lille 1 -- Sciences et Technologies, \par 
Cité Scientifique 59655, \par 
59650 Villeneuve d'Ascq Cedex, FRANCE
\par\vspace{3mm}e-mail:} chang.yang@math.univ-lille1.fr  }}
\begin{document}

\title[Numerical study of a nonlinear heat equation for plasmas]{Numerical study of a nonlinear heat equation for  plasma physics}\thanks{F. Filbet is partially supported by the European Research Council ERC Starting Grant 2009,  project 239983-\textit{NuSiKiMo}, C. Negulescu is partially supported by the ANR project ESPOIR}

\author{Francis Filbet, Claudia Negulescu and Chang Yang}

\hyphenation{bounda-ry rea-so-na-ble be-ha-vior pro-per-ties
cha-rac-te-ris-tic}

\maketitle

\begin{abstract}
This paper is devoted to the numerical approximation of a nonlinear temperature balance equation, which describes the heat evolution of a magnetically confined  plasma in the edge region of a tokamak. The nonlinearity implies some numerical difficulties, in particular long time behavior, when solved with standard methods. An efficient numerical scheme is presented in this paper, based on a combination of a directional splitting scheme and the IMEX scheme introduced in \cite{bibAclass}.
\end{abstract}

\noindent {\sc Keywords.}{ Nonlinear heat equation, IMEX scheme, finite volume method} 



\section{\bf Introduction} 
\setcounter{equation}{0}
\label{sec:1}
The description and simulation of the transport, especially the turbulence of magnetically confined fusion plasmas in the edge region called scrape off layer (SOL) of a tokamak, is nowadays one of the main problems for fusion generated energy production (ITER). The understanding of the physics in this edge region is fundamental for the performances of the tokamak, in particular the plasma-wall interactions as well as the occurring turbulence have an important impact on the confinement properties of the plasma. From a numerical point of view, an accurate approximation of the plasma evolution in the edge region is essential since energy fluxes as well as  particle fluxes at the boundary are used as boundary conditions for the mathematical model applied to describe the plasma evolution in the center region (core) of the tokamak. The physical properties of these two regions (core/edge) are rather different, so that different models are used for the respective plasma-evolution modeling: the gyrokinetic approach for the collisionless core-plasma and the fluid approach for the collisional edge-plasma.\\

A large variety of models can be found in literature \cite{livia,patrick_ph} for the description  of the SOL, based on various assumptions and aimed to describe different physical phenomena. We shall concentrate in this paper on the TOKAM3D model, introduced in \cite{patrick}. The aim of this model is the investigation of the instabilities occurring in this plasma edge region, as for example the Kevin-Helmholtz instability, the electron-temperature-gradient (ETG), ion-temperature-gradient instabilities (ITG) , etc.
\\
The TOKAM3D model is based on a two-fluid description (ions, electrons) and consists of the usual continuity equation, equation of motion and energy balance equation, closed by the so-called ``Braginskii closure''. These equations are
\begin{equation} \label{EQ_MF}
\left\{
\begin{array}{l}
\displaystyle \partial_t n_\alpha + \nabla \cdot (n_\alpha u_\alpha) =S_{n\alpha}\,, \\[3mm]
\displaystyle m_\alpha n_\alpha \left[ \partial_t u_\alpha + (u_\alpha \cdot \nabla ) u_\alpha \right] = - \nabla p_\alpha + n_\alpha e_\alpha (E + u_\alpha \times B) - \nabla \cdot \Pi_\alpha + R_\alpha\,, \\[3mm]
{3 \over 2} n_\alpha \left[ \partial_t T_\alpha + (u_\alpha \cdot \nabla ) T_\alpha \right] + p_\alpha  \nabla \cdot u_\alpha = - \nabla \cdot q_\alpha - \Pi_\alpha : \nabla u_\alpha + Q_\alpha\,,
\end{array}
\right.
\end{equation}
where $n_\alpha$ is the particle density ($\alpha=e$ for electrons and $\alpha=i$ for ions), $u_\alpha$ the velocity, $\Gamma_\alpha:= n_\alpha u_\alpha$ the particle flux, $m_\alpha$ the particle mass, $ e_\alpha$ the particle charge ($e_e=-1$ for electrons and $e_i=1$ for ions), $p_\alpha$ the pressure, $\Pi_\alpha$ the stress (viscosity) tensor, $S_{n \alpha}$ a particle source term (coming from the core plasma), $R_\alpha$ the friction force due to collisions, $T_\alpha$ the temperature, $q_\alpha$ the energy flux and finally $Q_\alpha$ the particle exchange energy term, due to collisions. In the Braginskii closure, the pressure is specified as $p_\alpha:=n_\alpha T_\alpha$ (perfect gas assumption), the plasma viscosity is supposed negligible, such that $\nabla \cdot \Pi_\alpha=0 $ and $\Pi : \nabla u_\alpha=0$ and the energy flux $q_\alpha$ is supposed to have a diffusive form, given in terms of the temperature gradient, as follows $q_\alpha:= - \kappa_\alpha \nabla T_\alpha$ (coming from the Fourier law) with $\kappa_\alpha$ the thermal conductivity coefficient. The energy exchange term $Q_\alpha$ is taken under the form
$$
Q_\alpha:= \pm 3 {m_e \over m_i} {n_\alpha \over \tau_e} (T_e -T_i)\,,
$$
where $\tau_e$ is the electron-ion collision time.\\
Due to the high complexity of the problem, several other hypothesis are assumed, permitting to concentrate on the desired features and to filter out the insignificant/disturbing details. These hypothesis, as for example the quasi-neutrality $n_e \sim n_i$, are not detailed here and we refer the reader to the more physical works \cite{briz,patrick_ph,wesson}. \\

Several difficulties arise when trying to solve numerically the system (\ref{EQ_MF}). We shall concentrate in this paper only on the temperature equation, which requires  at the moment still a lot of effort, due to its inherent numerical burden. The resolution of the two other equations was the aim of the PhD thesis \cite{patrick}. The numerical difficulties in solving the temperature equation are firstly related to the thermal conductivity coefficients, which depend on the temperature itself, leading thus to a non-linear problem. Secondly, the strong magnetic field which confines the tokamak plasma introduces a sharp anisotropy into the problem. Indeed, the charged particles gyrate around the magnetic field lines, moving thus freely along the field lines, but their dynamics in the perpendicular directions is rather restricted. Quantities as for example the resistivity or the conductivity, differ thus in several orders of magnitude when regarded in the parallel or perpendicular directions. Finally, boundary conditions have to be imposed, which is a rather delicate task from a physical, mathematical and numerical point of view.\\

Let us now present in more details the model we are interested in. In this paper, we shall study a simplified version of the temperature evolution equation, which contains however all the numerical difficulties of this last one. We shall focus on how to handle with the nonlinear terms and the boundary conditions, the high anisotropy being the aim of a forthcoming work \cite{BMN,MNN}. The simulation domain $\Omega=(0,1)\times(0,1)$ with boundary $\partial \Omega$ is presented in Figure \ref{ima1}. It consists of a periodic core region, separated by a Separatrix from the non-periodic SOL region. Its axes represent the direction parallel to the magnetic field lines ($s$) and the radial direction ($r$). We assume in this paper that all quantities are invariant with respect to the poloidal angle $\varphi$. The parallel thermal conductivities $\kappa_{s,||}$ depend on  $T_\alpha^{5/2}$ whereas the perpendicular ones $\kappa_{s,\perp}$, governed by the turbulence, are independent of the temperature \cite{bragi}.\\

The system we are interested in, is composed of the evolution equation
\begin{equation} 
\label{temp_simpl}
\partial_t T_\alpha - \partial_s (K_{||,\alpha}\, T^{5/2}_\alpha\, \partial_s T_\alpha) - \partial_r ( K_{\perp,\alpha} \, \partial_r T_\alpha) = \pm \beta_\alpha(T_e - T_i)\,, \quad  \,\, (s,r) \in \Omega\,,
\end{equation}
completed with the boundary conditions
\begin{equation} \label{BC}
\left\{
\begin{array}{l}
 \displaystyle \partial_r T_\alpha =  -{Q_{\perp,\alpha}}\,, \,\, r=0\,, \,s \in (0,1)\,,
\\
\,
\\
\displaystyle\partial_r T_\alpha = 0 \,, \,\, r=1\,, \, s \in (0,1)\,,
\\
\,
\\
\displaystyle K_{||,\alpha}\, T^{5/2}_\alpha\, \partial_s T_\alpha =  \,\gamma_\alpha \, T_\alpha, \,\, r \in (1/2,1)\,, \, s=0\,,
\\
\,
\\
\displaystyle K_{||,\alpha}\, T^{5/2}_\alpha\, \partial_s T_\alpha = -\gamma_\alpha \,  T_\alpha, \,\, r \in (1/2,1)\,, \, s=1 \,,
\\
\,
\\
\displaystyle T(t,0,r)= T(t,1,r)\,, \,\, r \in (0,1/2),
\end{array}
\right.
\end{equation}
and the initial condition
\begin{equation} \label{IC}
T_\alpha(0)=T_{\alpha,0} \ge 0\,.
\end{equation}
The diffusion parameters $0<K_{\perp,\alpha}\ll K_{||,\alpha}$ and the core-heat flux $Q_{\perp,\alpha}>0$ are considered as given. The non-linear boundary conditions at the limiter express the fact, that we have continuity of the heat fluxes at the boundary. Indeed, the heat flux $q:= \gamma \Gamma_{||} T$ at the boundary is given as the sum of a diffusive and a convective term, like
$$
\gamma\, \Gamma_{||}\, T = {5 \over 2}\, \Gamma_{||}\, T - | \kappa_{||} |\, \partial_s T\,, \quad \Gamma_{||} = n \, u_{||}\,.
$$ 
At $s=0$ the particle velocity $u_{||} <0$ is negative, whereas at $s=1$ we have $u_{||} >0$, which gives rise to the boundary conditions in (\ref{BC}). The constant $\gamma_\alpha$ is different for electrons and ions, in particular $\gamma_i \sim 0$ for ions and $\gamma_e \sim 5/2$ for electrons. In the case of ions, we have thus homogeneous Neumann boundary conditions at the limiter.

\vspace{0.2cm}
\begin{figure}[htbp]
\begin{center}
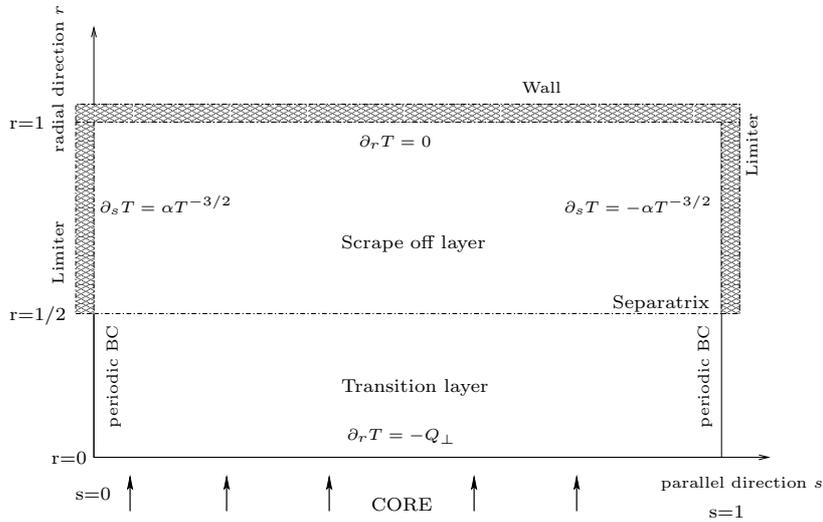
\end{center}
\caption{\label{ima1} The 2D domain.}
\end{figure}
\vspace{0.2cm}

The outline of this paper is the following. In Section \ref{sec:2},  we will focus on the 1D nonlinear parabolic problem
\begin{equation*}
  \partial_tT-\partial_s(K_\parallel T^{5/2}\partial_sT)=0,
\end{equation*}
completed with the nonlinear boundary conditions in $s=0,1$. A mathematical study is firstly performed. Then, explicit, implicit and IMEX schemes are compared for the resolution of this 1D problem, with respect to precision and simulation time. In Section \ref{sec:3} we consider the complete 2D problem for one species (without the source term). A directional Lie splitting method is used in order to transform the 2D problem in two 1D problems and to apply the results of the previous section. Finally, in Section \ref{sec:4} we solve the complete 2D ion-electron coupled problem. The shapes of the different electron/ion temperatures are compared.

\section{\bf The 1D nonlinear problem}
\label{sec:2}
\setcounter{equation}{0}
Let us consider in this section the 1D nonlinear problem,
corresponding to the temperature balance equation in the parallel direction, {\it i.e.}
\begin{equation}
  \left\{
  \begin{array}{ll}
 \displaystyle   \partial_tT-\partial_s(K_\parallel |T|^{5/2}\partial_sT)=0,\,\,(t,s)\in \RR^+\times (0,1), 
\\
\,
\\
  \displaystyle   K_{\parallel}|T|^{5/2}\partial_sT=\gamma T,\, s=0, 
\\
\,
\\
  \displaystyle   K_{\parallel}|T|^{5/2}\partial_sT=-\gamma T,\,s=1,
\\
\,
\\
  \displaystyle   T(0,\cdot)=T^0,
  \end{array}
\right.
\label{eq1D}
\end{equation}
where $\gamma \ge 0$ is a given constant, $K_{||} \in
L^{\infty}(\Omega)$, $T^0 \in L^{2}(\Omega)$, with $T^0 \ge 0$ and  $K_{||}\ge 0$ almost everywhere. Let us denote in this section the domain by $\Omega=(0,1)$ and the time-space cylinder by $Q\,:= \RR^+ \times (0,1)$. The aim of this section is to study from a mathematical point of view this equation and to introduce an efficient numerical scheme for its resolution. From a physical point of view, problem
(\ref{eq1D}) describes
the rapid diffusion process of the initial temperature $T^0$ and the
outflow through the boundary.
\subsection{Mathematical study}
Before starting with the numerical discretization, we first establish some properties of the 1D diffusion problem (\ref{eq1D}), like
existence, uniqueness of a solution, positivity etc. To simplify the presentation, we shall assume for the present study that $K_{||} \equiv 1$, the general case being treated equally.\\

We also denote $p >2$ and $p'$ its conjugate number $1< p':={p \over p-1}<2$. The
diffusion coefficient can now be written as $a(T):= |T|^{p-2}$. Moreover, let us define the primitive
$$
\Lambda(T):= \int_0^T a(x)\, dx = {1 \over p-1} |T|^{p-2} T\,.
$$
With these notations, the diffusion equation can be simply rewritten
under one of the two forms
$$
\partial_t T - \partial_s \left( a(T) \partial_s T  \right)=0\,, \quad \partial_t T - \partial_{ss} \left( \Lambda(T) \right)=0\,.
$$
We shall now introduce the concept of weak solution of problem (\ref{eq1D}) and state the
existence/uniqueness theorem.
\begin{definition}
Let us consider $T^0 \in L^2(\Omega)$ and define $\mathcal{W}\subset L^p(Q)$ as the space 
$$
\mathcal{W}:= \left\{ T \in L^2(Q), \quad |T|^{p-2 \over 2} T \in
L^2_{loc}(\RR^+,H^1(\Omega)), \quad \partial_t T \in
L^{p'}_{loc}(\RR^+,(W^{1,p}(\Omega))^*) \right\},
$$
and we denote by $\mathcal{D}=\C^1_c(\RR^+,W^{1,p}(\Omega))$ the space of test functions. Then the temperature $T \in \mathcal{W}$ is a weak solution to (\ref{eq1D}) if and only it satisfies
\begin{eqnarray}
  &&\int_{\RR^+}\int_{\Omega} T(t,s)\,\partial_t\varphi(t,s) \,ds\,dt-\int_{\RR^+}\int_{\Omega} | T(t,s)|^{p-2}\partial_{s}T(t,s) \partial_{s}\varphi(t,s) dsdt\nonumber\\
 && -\,\gamma\int_{\RR^+} \left[T(t,1)\varphi(t,1)+T(t,0)\varphi(t,0) \right]dt\,+\, \int_{\Omega} T^0(s)\,\varphi(0,s)ds\,=\,0,\quad \forall\varphi\in\mathcal{D}.\nonumber
\label{eqweaksolution}
\end{eqnarray}
\end{definition}

Remark that all the terms in this variational formulation are well-defined. 
Moreover, we observe that a function $T \in L^p(\RR^+\times\Omega)$ satisfying $\partial_t T \in L^{p'}_{loc}(\RR^+,(W^{1,p}(\Omega))^*)$ belongs to $\C([0,\tau],(W^{1,p}(\Omega))^*)$, for all $\tau>0$ by Aubin's Lemma, such that the initial condition is well-defined.

\begin{theorem}
  Let $T^0 \in L^2(\Omega)$ with $T^0 \ge 0$. Then, there exists a unique weak solution $T \in \mathcal{W}$ of
  (\ref{eq1D}), which satisfies $T \in L^{\infty}(\RR^+, L^2(\Omega))$, $T \ge
  0$ almost everywhere and 
$$
{d \over dt} \| T(t,\cdot)\|^2_{L^2(\Omega)} \le 0.
$$
\end{theorem}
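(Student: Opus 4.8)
The strategy is the classical vanishing–viscosity (regularization) method for degenerate parabolic equations. Once positivity is known (which we prove below), the absolute values may be dropped and \eqref{eq1D} reads $\partial_t T=\partial_{ss}\Lambda(T)$ with $\Lambda(T)=\tfrac1{p-1}T^{p-1}$: a one–dimensional filtration (slow–diffusion, porous–medium type) equation, degenerate at $T=0$, supplemented with the nonlinear outflow conditions $\partial_s\Lambda(T)=\gamma T$ at $s=0$ and $\partial_s\Lambda(T)=-\gamma T$ at $s=1$. I would regularize the diffusion to make it uniformly parabolic, solve the regularized problems, derive $\varepsilon$–uniform bounds, and let $\varepsilon\to0$. (Alternatively one may check that $u\mapsto-\partial_{ss}\Lambda(u)$ with these boundary conditions is $m$–accretive in $L^1(\Omega)$ — the boundary contributions have the good sign precisely because $\gamma\ge0$ and $\Lambda$ is increasing — and invoke the Crandall–Liggett theorem; I prefer the regularization route since it delivers the $L^2$ properties directly.)

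\noindent\textbf{Regularized problems and a priori estimates.} For $\varepsilon>0$ put $a_\varepsilon(T)=|T|^{p-2}+\varepsilon$ and pick smooth $T^0_\varepsilon\ge0$ with $T^0_\varepsilon\to T^0$ in $L^2(\Omega)$. The problem $\partial_tT_\varepsilon-\partial_s(a_\varepsilon(T_\varepsilon)\partial_sT_\varepsilon)=0$ with the same boundary and initial data is quasilinear and uniformly parabolic; existence and uniqueness of a solution $T_\varepsilon$ in the usual parabolic energy space follow from standard theory (a Galerkin scheme in $H^1(\Omega)$ — the boundary terms $\pm\gamma T_\varepsilon(t,0),\pm\gamma T_\varepsilon(t,1)$ enter as lower–order contributions — together with the energy estimate below and Aubin–Lions compactness). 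Testing against $\min(T_\varepsilon,0)$ and using the boundary conditions, the boundary contributions produced are $\le0$ since $\gamma\ge0$, so that $\tfrac{d}{dt}\|\min(T_\varepsilon,0)(t,\cdot)\|_{L^2(\Omega)}^2\le0$ and $\min(T^0_\varepsilon,0)=0$; hence $T_\varepsilon\ge0$. Testing against $T_\varepsilon$ itself gives
\[
\tfrac12\,\tfrac{d}{dt}\|T_\varepsilon(t,\cdot)\|_{L^2(\Omega)}^2+\int_\Omega a_\varepsilon(T_\varepsilon)|\partial_sT_\varepsilon|^2\,ds+\gamma\bigl(T_\varepsilon(t,0)^2+T_\varepsilon(t,1)^2\bigr)=0 .
\]
Since $a_\varepsilon(T_\varepsilon)\ge|T_\varepsilon|^{p-2}$ and $|T_\varepsilon|^{p-2}|\partial_sT_\varepsilon|^2=\tfrac4{p^2}\bigl|\partial_s(|T_\varepsilon|^{(p-2)/2}T_\varepsilon)\bigr|^2$, this bounds, uniformly in $\varepsilon$, $T_\varepsilon$ in $L^\infty(\RR^+;L^2(\Omega))$ and $v_\varepsilon:=|T_\varepsilon|^{(p-2)/2}T_\varepsilon$ in $L^2_{loc}(\RR^+;H^1(\Omega))$; by the $1$D embedding $H^1(\Omega)\hookrightarrow L^\infty(\Omega)$ one deduces $T_\varepsilon$ bounded in $L^p_{loc}(Q)$. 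Writing $a_\varepsilon(T_\varepsilon)\partial_sT_\varepsilon=a_\varepsilon(T_\varepsilon)^{1/2}\cdot\bigl(a_\varepsilon(T_\varepsilon)^{1/2}\partial_sT_\varepsilon\bigr)$ and applying H\"older shows this flux is bounded in $L^{p'}_{loc}(Q)$, so that, reading the equation, $\partial_tT_\varepsilon$ is bounded in $L^{p'}_{loc}(\RR^+;(W^{1,p}(\Omega))^*)$; moreover $\varepsilon\,\partial_sT_\varepsilon\to0$ in $L^2_{loc}(Q)$.

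\noindent\textbf{Passage to the limit, positivity, energy decay.} By a standard compactness argument for degenerate parabolic equations (Aubin–Lions combined with the monotone change of unknown, in the spirit of Alt–Luckhaus), a subsequence satisfies $v_\varepsilon\rightharpoonup v$ in $L^2_{loc}(\RR^+;H^1(\Omega))$ and, using the $1$D Sobolev embedding with time–compactness, $v_\varepsilon\to v$ in $L^2_{loc}(\RR^+;\C(\overline\Omega))$; $T_\varepsilon\to T$ in $L^p_{loc}(Q)$ and a.e., whence $v=|T|^{(p-2)/2}T$ and the traces $T_\varepsilon(\cdot,0),T_\varepsilon(\cdot,1)\to T(\cdot,0),T(\cdot,1)$; and $a_\varepsilon(T_\varepsilon)\partial_sT_\varepsilon=\tfrac2p|T_\varepsilon|^{(p-2)/2}\partial_sv_\varepsilon+\varepsilon\partial_sT_\varepsilon\rightharpoonup\tfrac2p|T|^{(p-2)/2}\partial_sv=|T|^{p-2}\partial_sT$ in $L^{p'}_{loc}(Q)$ (product of a strongly and a weakly convergent factor). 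Passing to the limit in the weak formulation yields $T\in\mathcal{W}$ a weak solution of \eqref{eq1D}, and $T_\varepsilon\ge0$ gives $T\ge0$. Finally, integrating the energy identity above in time and letting $\varepsilon\to0$ (using $T^0_\varepsilon\to T^0$ in $L^2$, that $T_\varepsilon(t,\cdot)\to T(t,\cdot)$ in $L^2(\Omega)$ for a.e.\ $t$ since $p>2$, and weak lower semicontinuity of the dissipation) shows $t\mapsto\|T(t,\cdot)\|_{L^2(\Omega)}$ is nonincreasing; a time–mollification argument legitimates $T$ itself as a test function, which gives the energy identity on the limit and hence, since $\gamma\ge0$, $\tfrac{d}{dt}\|T(t,\cdot)\|_{L^2(\Omega)}^2\le0$.

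\noindent\textbf{Uniqueness and the main obstacle.} For two solutions $T_1,T_2\in\mathcal{W}$ with the same datum, $w=T_1-T_2$ satisfies $\partial_tw=\partial_{ss}(cw)$ with $c:=(\Lambda(T_1)-\Lambda(T_2))/(T_1-T_2)\ge0$ bounded ($\Lambda$ locally Lipschitz, $T_i$ locally bounded), together with the boundary conditions $\partial_s(cw)=\pm\gamma w$ at $s=0,1$ obtained by subtracting those of $T_1,T_2$. A Holmgren–Oleinik duality argument — solving the backward adjoint problem $\partial_t\phi+(c+\delta)\partial_{ss}\phi=g$ with zero final data and appropriate (adjoint) boundary conditions, testing, and letting $\delta\to0$ — yields $\int_0^\tau\!\!\int_\Omega wg=0$ for all smooth $g$ and all $\tau>0$, the boundary terms dropping with the favorable sign because $\gamma\ge0$; hence $w\equiv0$ (equivalently, this is the $L^1$–contraction of the underlying nonlinear semigroup). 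The crux of the whole proof is the limit step: because the diffusion degenerates where $T=0$, the nonlinear flux cannot be identified by weak convergence alone — one needs the strong (a.e.)\ convergence of $T_\varepsilon$, which comes only from the $\varepsilon$–uniform $L^2_{loc}(H^1)$ bound on $v_\varepsilon=|T_\varepsilon|^{(p-2)/2}T_\varepsilon$ together with the bound on $\partial_tT_\varepsilon$, plus the convergence of the boundary traces. The other recurring technicality, in the last two steps, is the justification of non–smooth test functions ($T$ itself, and the adjoint function) by time regularization; the nonstandard outflow conditions appear throughout but, thanks to $\gamma\ge0$, always with the helpful sign.
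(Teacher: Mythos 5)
Your proposal is correct and follows essentially the same route as the paper: regularize the degenerate diffusion coefficient, derive $\varepsilon$-uniform energy estimates (the $L^\infty(L^2)$ bound, the $L^2(H^1)$ bound on $|T_\varepsilon|^{(p-2)/2}T_\varepsilon$, and the $L^{p'}((W^{1,p})^*)$ bound on $\partial_t T_\varepsilon$), and pass to the limit by Aubin--Lions/Simon compactness together with trace convergence. The only notable difference is that you spell out the uniqueness step via a Holmgren--Oleinik duality argument, whereas the paper merely asserts uniqueness; your version is the more complete one on that point.
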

The proof of this theorem is decomposed in several steps. For the
beginning, we shall suppose that $T^0 \in L^{\infty}(\Omega)$, with
$\|T^0\|_{\infty} \le M$ and fixed  $M>0$. A truncation can be done, for more general $T^0 \in L^{2}(\Omega)$.\\
Two main difficulties arise in the mathematical study of (\ref{eq1D}),
the nonlinearity and the degeneracy, which means that the equation
changes its type there where $T=0$. 
\begin{proof}
We shall first regularize the problem, in order to avoid the degeneracy. Then, in a
second step, we shall treat the nonlinearity via a fixed point argument. Finally, {\it a priori} estimates shall help us to pass to the limit, in order to deal with the degenerate problem. Let us thus detail these steps.

\paragraph{\bf First step: Regularization.} Let $0< \epsilon<1$ be fixed and let us define the regularized diffusion coefficient
$$
a_{\epsilon,M}(T):= \left[ \epsilon^2 + \min ( |T|^2, M^2) \right]^{p-2 \over 2}\,,
$$
and the corresponding primitive
$$
\Lambda_{\epsilon,M}(T):= \int_0^T a_{\epsilon,M}(x)\, dx \,.
$$
The diffusion coefficients being now bounded from below and above, standard arguments allow to prove that the regularized problem
\begin{equation}
  \left\{
  \begin{array}{l}
 \displaystyle{   \partial_tT_{\epsilon,M}-\partial_s(a_{\epsilon,M}(T_{\epsilon,M}) \partial_sT_{\epsilon,M})\,=\,0,\quad (t,s)\in \RR^+ \times (0,1), }
\\
\,
\\
  \displaystyle{   a_{\epsilon,M}(T_{\epsilon,M}) \partial_sT_{\epsilon,M}\,=\,\gamma T_{\epsilon,M},\quad s\,=\,0,} 
\\
\,
\\
  \displaystyle{   a_{\epsilon,M}(T_{\epsilon,M}) \partial_sT_{\epsilon,M}\,=\,-\gamma T_{\epsilon,M},\quad s\,=\,1,}
\\
\,
\\
 \displaystyle{   T(0, \cdot)\,=\,T^0,}
  \end{array}
\right.
\label{pb_reg}
\end{equation}
has a unique weak solution $T_{\epsilon,M} \in L^2(\RR^+, H^1(\Omega))$ such that it satisfies the following variational formulation: for any $\varphi\in \C^1_c(\RR^+, H^1(\Omega))$
\begin{eqnarray} 
\label{var}
\int_{\RR^+} \int_{\Omega}  T_{\epsilon,M}(t,s) \partial_t\varphi(t,s) \,ds\,dt\,-\,\int_{\RR^+}\int_{\Omega} a_{\epsilon,M}(T_{\epsilon,M}) \partial_{s}T_{\epsilon,M}(t,s) \partial_{s}\varphi(t,s) \,ds\,dt
\\
\nonumber
 -\,\gamma\,\int_{\RR^+} \left[T_{\epsilon,M}(t,1)\phi(t,1)+T_{\epsilon,M}(t,0)\phi(t,0) \right]dt\,+\,\int_{\Omega}  T^0(s)\varphi(0,s) \,ds \,=\,0.
\end{eqnarray}
These arguments are based on the Schauder fixed point theorem, applied
on the mapping $\mathcal{T}$ : $B_R  \mapsto B_R$ with 
$$
B_R \,:=\, \left\{ v \in L^2(Q), \,\, \|v\|_{L^2(Q)} \le R \right\}\,,
$$
where for $v \in B_R$ we associate $\mathcal{T} v$ the solution of the linearized problem associated to (\ref{pb_reg}).

\paragraph{\bf 2nd step: a priori estimates.} In order to pass to the limit $\epsilon \rightarrow 0$, we will need
some {\it a priori} estimates for the solution $T_{\epsilon,M}$, independent of $\epsilon$. Taking in the variational formulation (\ref{var}) as test function $T_{\epsilon,M}$,  yields first
\begin{eqnarray*}
 {1 \over 2} \int_{\Omega}  |T_{\epsilon,M}(t,s)|^2 ds  \,+\, \int_0^t \int_{\Omega}
a_{\epsilon,M}(T_{\epsilon,M}) |\partial_s T_{\epsilon,M}|^2 ds d\tau 
\\
 +\,\gamma\int_0^t \left[|T_{\epsilon,M}(\tau,1)|^2+ |T_{\epsilon,M}(\tau,0)|^2 \right]d\tau\,\,=\,\, {1 \over 2} \int_{\Omega}
 |T^0(s)|^2 ds
\end{eqnarray*}
which implies that for all $t\geq 0$
$$
\left\{
\begin{array}{l}
\displaystyle \|T_{\epsilon,M}(t)\|_{L^2(\Omega)} \,\,\le\,\, \|T^0\|_{L^2(\Omega)}\,, 
\\
\,
\\ 
\displaystyle\int_0^t \int_{\Omega} a_{\epsilon,M}(T_{\epsilon,M}) |\partial_s T_{\epsilon,M}|^2 ds\, d\tau \,\,\le\,\, \|T^0\|^2_{L^2(\Omega)}\,.
\end{array}\right.
$$
This shows also, that the sequence $\{ |T_{\epsilon,M}|^{p-2 \over 2} T_{\epsilon,M} \}_{\epsilon}$ is bounded in $L^2(\RR^+, H^1(\Omega))$ and hence $\{ T_{\epsilon,M} \}_{\epsilon}$ bounded in $L^p(Q)$. Moreover by standard arguments for parabolic problems we deduce than, that $\{ \partial_t T_{\epsilon,M} \}_{\epsilon}$ is bounded in $L^{p'}(\RR^+, (W^{1,p}(\Omega))^*)$.

\paragraph{\bf Third step: passing to the limit.} The {\it a priori} estimates of the last step permit us to show, that there is a sub-sequence and a function $T_M \in L^{2}(Q)$, such that 
$$
T_{\epsilon,M} \rightharpoonup T_M \quad \textrm{in} \quad L^2(\RR^+\times\Omega)\,\quad\textrm{ as } \epsilon \rightarrow 0.
$$
Moreover, from standard compactness arguments \cite{simon}, we show that the following set of measurable functions
$$
\mathcal{F}:= \left\{ u: \RR^+ \rightarrow (W^{1,p}(\Omega))^*,\quad \int_{\RR^+} \int_{\Omega} |u|^{p-2} |\partial_s u|^2 \,ds \, dt \le C\,, \,\, \partial_t u \in L^{p'}(\RR^+,(W^{1,p}(\Omega))^*) \right\}\,,
$$
is compactly embedded in $L^p(\RR^+\times\Omega)$, implying thus that, up to a sub-sequence
$$
T_{\epsilon,M} \rightarrow T_M, \textrm{ in } L^p(\RR^+\times\Omega)\,,\textrm{ as } \epsilon \rightarrow 0
$$
and then $T_{\epsilon,M} \rightarrow T_M$ a.e. in $Q$ when $\epsilon$ goes to zero.  Furthermore, since $\{ |T_{\epsilon,M}|^{p-2 \over 2} T_{\epsilon,M} \}_{\epsilon}$ is
bounded in $L^2(\RR^+, H^1(\Omega))$, one has 
$$
|T_{\epsilon,M}|^{p-2 \over 2} T_{\epsilon,M}
\rightharpoonup |T_{M}|^{p-2 \over 2}T_{M}\,,
\quad \textrm{in} \quad L^2(\RR^+, H^1(\Omega)),\textrm{ as } \epsilon \rightarrow 0,
$$
implying by the weak continuity of the trace application
$$
|T_{\epsilon,M}|^{p-2 \over 2} T_{\epsilon,M}
\rightharpoonup |T_{M}|^{p-2 \over 2}T_{M},\textrm{ in } L^2(\RR^+\times\partial \Omega),\textrm{ as } \epsilon \rightarrow 0.
$$
Finally, we also have using the same arguments, when  $\epsilon \rightarrow 0$
$$
\left\{
\begin{array}{l}
\displaystyle\Lambda_{\epsilon,M}(T_{\epsilon,M})\rightharpoonup \Lambda_{M}(T_{M}), \textrm{ in } L^2(\RR^+, H^1(\Omega)),
\\
\,
\\
\displaystyle\partial_t T_{\epsilon,M}\rightharpoonup \partial_t T_{M}, \textrm{ in } L^{p'}(\RR^+, (W^{1,p}(\Omega))^*).
\end{array}\right.
$$
All these convergences permit us now to pass to the limit in the variational formulation (\ref{var}) in order to show the existence of a weak solution of problem (\ref{eq1D}). This solution is even unique and satisfies the maximum principle, which can be shown as in step 2.
\end{proof}

%

\subsection{A finite volume approximation}
In this section, we propose to derive a numerical scheme for (\ref{eq1D}) in which we apply a finite volume approach for the discretization in the space variable. Let us consider a set of points $(s_{i-1/2})_{0\leq i\leq n_s}$ of the interval $(0,1)$
with $s_{-1/2}=0$, $s_{n_s-1/2}=1$  and $n_s+1$ represents the number of discrete points. For $0\leq i \leq n_s-1$, we define the control cell $C_i$ by the space interval $C_i=(s_{i-1/2},s_{i+1/2})$. We  also denote by $s_i$ the middle of $C_i$  and by $\Delta s_i$ the space step $\Delta s_i = s_{i+1/2}-s_{i-1/2}$ where we suppose that there exists $\xi\in(0,1)$ such that
\begin{equation}
\label{cond:mesh}
\xi\,\Delta s \,\leq \,\Delta s_i \,\leq\,\Delta s, \quad \forall i\in\{0,\ldots,n_s-1\},
\end{equation}
with $\Delta s=\max_i\Delta s_i$.

We shall construct a set of approximations $T_i(t)$ of the average of the solution to (\ref{eq1D}) on the control volume $C_i$ and first set 
$$
T_i^0 = \frac{1}{\Delta s_i}\,\int_{C_i} T_0(s)\,ds.
$$  
Applying a finite volume discretization  to (\ref{eq1D}), $T_i$ is solution to a system of ODEs, which  can be written as 
\begin{equation}
\label{sch:01}
\left\{
\begin{array}{l}  
\displaystyle\frac{dT_i}{dt}(t) \,=\,\frac{\F_{i+1/2}-\F_{i-1/2}}{\Delta s_i}\,, \quad 0\leq i\leq n_s-1,
\\
\,
\\
T_i(t=0) = T_{i}^0, \quad 0\leq i\leq n_s-1,
\end{array}\right.
\end{equation}
where the numerical flux is given by
\begin{equation}
\label{sch:flux}
\F_{i+1/2} \,=\,\frac{4\,K_\parallel}{7}\,\,\frac{\left(T_{i+1}\right)^{7/2}-\left(T_{i}\right)^{7/2}}{\Delta s_{i+1}+\Delta s_i} \,, \quad i=0,\dots,n_s-2.
\end{equation}
Moreover, at the boundary $s=0$ and $s=1$, we apply the boundary conditions, 
\begin{equation}
\label{sch:bc}
\F_{i+1/2}=
\left\{
\begin{array}{ll}
\displaystyle +\gamma \,T_{0}, & \textrm{if } i=-1,
\\
\,
\\
\displaystyle-\gamma \,T_{n_s-1}, &  \textrm{if } i=n_s-1.
\end{array}\right.
\end{equation}
Note that the above discretization on space is first order due to the loss of precision at the boundary. To complete the discretization to the system (\ref{eq1D}), the finite volume scheme (\ref{sch:01})-(\ref{sch:bc}) has to be supplemented with a stable and consistent time discretization step. In the following we present different time discretizations starting from classical explicit and implicit schemes and then propose a stable and accurate numerical approximation.
 
\subsection{Time explicit discretization}
We denote by $\Delta t>0$ the time step, $t^n\,=\,n\Delta t$ for any $n\in\NN$ and $T^n$ is an approximation of the solution $T$ to (\ref{eq1D}) at time $t^n$. Then, we apply a backward Euler scheme to (\ref{sch:01})-(\ref{sch:bc}), which yields
\begin{equation}
\label{sch:02}
\left\{
\begin{array}{l}  
\displaystyle\frac{T_i^{n+1}-T_i^n}{\Delta t} \,=\,\frac{\F_{i+1/2}^n-\F_{i-1/2}^n}{\Delta s_i}\,, \quad 0\leq i\leq n_s-1,
\\
\,
\\
T_i^0 = T_{0,i}, \quad 0\leq i\leq n_s-1,
\end{array}\right.
\end{equation}
with $\F_{i+1/2}^n$ the flux (\ref{sch:flux})-\eqref{sch:bc} computed from the approximation at time $T^n$.

Classically, to guarantee the stability of the scheme (\ref{sch:02}), the time step $\Delta t$ is restricted by a CFL condition. 
\begin{proposition}
\label{prop:01}
Consider that the initial datum $T_0$ is nonnegative and  $T_0\in L^\infty(0,1)$ and assume the stability condition
\begin{equation}
\Delta t\leq \frac{\xi^2\Delta s^2}{\max\left(\frac{4\,K_\parallel}{7}\|T_0\|^{5/2}_\infty,\gamma\Delta s\right)}\,,
\label{dtcfl}
\end{equation}
where $\xi$ is given in (\ref{cond:mesh}). Then, the numerical solution $(T_i^n)_{i,n}$ obtained by the explicit scheme (\ref{sch:02}) is stable and converges to the exact solution to (\ref{eq1D}). 
\end{proposition}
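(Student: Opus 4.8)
The plan is to establish separately the two claims of Proposition \ref{prop:01}: an $L^\infty$ \emph{stability} bound for the explicit scheme (\ref{sch:02})--(\ref{sch:bc}), and then \emph{convergence} of the discrete solutions towards the unique weak solution of (\ref{eq1D}).

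For stability I would prove by induction on $n$ that $0\le T_i^n\le M:=\|T_0\|_{L^\infty(0,1)}$ for every $i$ and $n$; the case $n=0$ is clear from the definition of $T_i^0$. For the inductive step I would put the scheme in convex-combination form: applying the mean value theorem to $x\mapsto x^{7/2}$ one writes $(T_{i+1}^n)^{7/2}-(T_i^n)^{7/2}=\tfrac72\,\theta_{i+1/2}^{5/2}(T_{i+1}^n-T_i^n)$ with $\theta_{i+1/2}\in[0,M]$ between $T_i^n$ and $T_{i+1}^n$, so that for an interior cell (\ref{sch:02})--(\ref{sch:flux}) reads
$$ T_i^{n+1}=\big(1-\omega_{i+1/2}^n-\omega_{i-1/2}^n\big)T_i^n+\omega_{i+1/2}^n\,T_{i+1}^n+\omega_{i-1/2}^n\,T_{i-1}^n,\qquad \omega_{i+1/2}^n:=\frac{2\,K_\parallel\,\Delta t\,\theta_{i+1/2}^{5/2}}{\Delta s_i\,(\Delta s_{i+1}+\Delta s_i)}\ge 0. $$
Bounding $\theta^{5/2}\le M^{5/2}$ by the induction hypothesis and using the mesh regularity (\ref{cond:mesh}) ($\Delta s_i\ge\xi\Delta s$, $\Delta s_{i\pm1}+\Delta s_i\ge 2\xi\Delta s$), the self-weight $1-\omega_{i+1/2}^n-\omega_{i-1/2}^n$ is nonnegative precisely under the first term in the maximum of (\ref{dtcfl}); hence $T_i^{n+1}$ is a convex combination of $T_{i-1}^n,T_i^n,T_{i+1}^n$ and stays in $[0,M]$. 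At the boundary cells I would insert (\ref{sch:bc}): for instance at $i=0$ one gets $T_0^{n+1}=(1-\omega_{1/2}^n-\tfrac{\gamma\Delta t}{\Delta s_0})T_0^n+\omega_{1/2}^n T_1^n$, a combination with nonnegative weights — nonnegativity of $1-\omega_{1/2}^n-\tfrac{\gamma\Delta t}{\Delta s_0}$ being exactly what the term $\gamma\Delta s$ in (\ref{dtcfl}) provides — whose sum is $1-\tfrac{\gamma\Delta t}{\Delta s_0}\le 1$, so again $0\le T_0^{n+1}\le M$, and symmetrically at $i=n_s-1$. This yields the discrete maximum principle $0\le T_i^n\le\|T_0\|_{L^\infty}$. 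I would also extract a discrete energy estimate: multiplying (\ref{sch:02}) by $\Delta s_i\,T_i^n$, summing over $i$ with a discrete integration by parts, using (\ref{dtcfl}) to control the residual $\|T^{n+1}-T^n\|^2$ and summing over $n$, one bounds $\sum_i\Delta s_i|T_i^N|^2$ together with the cumulative dissipation $\sum_{n<N}\Delta t\sum_i \F_{i+1/2}^n(T_{i+1}^n-T_i^n)$ by $\|T_0\|_{L^2}^2$, uniformly in the mesh; since $(a^{7/2}-b^{7/2})(a-b)\ge c\,|a^{9/4}-b^{9/4}|^2$ for $a,b\ge 0$, this controls a mesh-independent discrete $H^1$-seminorm of the reconstruction of $|T_\Delta|^{(p-2)/2}T_\Delta$, the nonlinear variable of the weak-solution space.

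For convergence I would introduce the space--time piecewise-constant reconstruction $T_\Delta$ built from $(T_i^n)$. The $L^\infty$ bound gives weak-$*$ compactness; the discrete energy estimate above, together with a bound (read off from the scheme) on the discrete time increments of $T_\Delta$ in the dual norm of $L^{p'}_{\mathrm{loc}}(\RR^+,(W^{1,p}(\Omega))^*)$, lets one invoke a discrete Aubin--Simon / Kolmogorov compactness argument — in the spirit of the third step of the proof of the existence theorem and using \cite{simon} — to extract a subsequence with $T_\Delta\to T$ strongly in $L^p_{\mathrm{loc}}(Q)$ and a.e. Since $0\le T_\Delta\le M$ and the maps $x\mapsto\Lambda(x)$ (equivalently $x\mapsto x^{7/2}$) and $x\mapsto|x|^{(p-2)/2}x$ are Lipschitz on $[0,M]$, the corresponding nonlinear quantities of $T_\Delta$ converge strongly, while the discrete flux, a consistent approximation of $\partial_s\Lambda(T)$, is bounded in $L^2$ and hence converges weakly to $\partial_s\Lambda(T)$. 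I would then multiply (\ref{sch:02}) by the cell averages of an arbitrary test function $\varphi\in\mathcal{D}$, sum by parts in $s$ and $t$, and pass to the limit using these convergences and the consistency of (\ref{sch:flux})--(\ref{sch:bc}); the limit $T$ satisfies the weak formulation of (\ref{eq1D}). As that weak solution is unique by the existence/uniqueness theorem, the whole family $(T_\Delta)$ — not just a subsequence — converges to it, which is the assertion of Proposition \ref{prop:01}.

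The stability part is essentially mechanical once the scheme is rewritten as a convex combination. The real obstacle is the convergence, specifically obtaining the \emph{mesh-independent} compactness needed to pass to the limit in the degenerate nonlinear flux — the uniform discrete $H^1$-estimate on $|T_\Delta|^{(p-2)/2}T_\Delta$ and a uniform discrete time-translation estimate — and checking consistency of the finite-volume flux on the nonuniform mesh, where the scheme is only first order near the boundary. The degeneracy of the equation at $T=0$ is what makes this delicate, and I would deal with it exactly as in the proof of the theorem, working with the nondegenerate nonlinearity $\Lambda(T)$ (equivalently $T^{7/2}$) rather than with $T$ directly in the diffusion term.
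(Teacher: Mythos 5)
Your proposal is correct in substance, and it is worth noting that the paper itself gives no proof of Proposition~\ref{prop:01}: it only remarks that the argument is ``similar to the proof of Proposition~\ref{prop:02}'', whose stability part rests on the convex-function inequality $\phi(T_i^{n+1})-\phi(T_i^n)\le\phi'(T_i^{n+1})(T_i^{n+1}-T_i^n)$ applied to $\phi(x)=x^-$, $(x-M)^+$ and $x^2/2$. Your stability argument takes a genuinely different (and for an explicit scheme, more natural) route: rewriting (\ref{sch:02})--(\ref{sch:bc}) as a convex combination via the mean value theorem applied to $x\mapsto x^{7/2}$, which gives the discrete maximum principle directly and makes transparent where each of the two terms in the maximum of (\ref{dtcfl}) is used. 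This is arguably preferable here, because the convexity trick of Proposition~\ref{prop:02} exploits evaluating $\phi'$ at time $n+1$ and would, for the explicit scheme, leave a second-order remainder that must itself be controlled by the CFL condition; your monotonicity argument avoids that bookkeeping. The $L^2$/dissipation estimate and the convergence part (piecewise-constant reconstruction, $L^\infty$ weak-$*$ compactness, strong compactness of $T_h^{7/2}$ from the discrete $H^1$ bound via \cite{simon}, passage to the limit in the weak formulation against test functions, and identification by uniqueness) follow the same lines as the paper's proof of Theorem~\ref{theo:01}, so nothing new is needed there.

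One small caveat: the claim that the self-weight is nonnegative ``precisely'' under (\ref{dtcfl}) is not exact. Bounding $\omega_{i+1/2}^n+\omega_{i-1/2}^n$ with $\theta^{5/2}\le\|T_0\|_\infty^{5/2}$, $\Delta s_i\ge\xi\Delta s$ and $\Delta s_{i\pm1}+\Delta s_i\ge 2\xi\Delta s$ requires $\Delta t\le \xi^2\Delta s^2/\bigl(2K_\parallel\|T_0\|_\infty^{5/2}\bigr)$, whereas (\ref{dtcfl}) only imposes the weaker bound with $\tfrac{4}{7}K_\parallel$ in place of $2K_\parallel$ (and, at the boundary cells, the two contributions $\omega_{1/2}^n$ and $\gamma\Delta t/\Delta s_0$ must be controlled simultaneously, not separately). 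This is a discrepancy in the numerical constant only --- the structure of your argument is right, and the condition in the Proposition should simply be read with the appropriate constant --- but as written the inductive step does not close under the literal inequality (\ref{dtcfl}).
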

We don't give the proof of this result since it is similar to the proof of Proposition \ref{prop:02} presented in the next section. Unfortunately, this simple scheme is not really efficient since it becomes costly when the mesh is very fine, the constraint on the time step becoming too restrictive. 

\subsection{Time implicit discretization}
To avoid the restrictive constraint on the time step (\ref{dtcfl}), an implicit scheme is more suitable. Therefore, we consider the finite volume scheme  (\ref{sch:01})-(\ref{sch:bc}) to the system of equations (\ref{eq1D}), but apply a forward Euler time discretization. This yields,
\begin{equation}
\label{sch:03}
\left\{
\begin{array}{l}  
\displaystyle\frac{T_i^{n+1}-T_i^n}{\Delta t} \,=\,\frac{\F_{i+1/2}^{n+1}-\F_{i-1/2}^{n+1}}{\Delta s_i}\,, \quad 0\leq i\leq n_s-1,
\\
\,
\\
T_i^0 = T_{0,i}, \quad 0\leq i\leq n_s-1,
\end{array}\right.
\end{equation}
with $\F_{i+1/2}^{n+1}$ the flux (\ref{sch:flux}) computed from the approximation at time $T^{n+1}$.  Hence, a fully nonlinear system has to be solved at each time step.

The scheme (\ref{sch:03}) coupled with (\ref{sch:flux})-(\ref{sch:bc}) is uniformly stable and leads to a numerical approximation which converges to the exact solution to (\ref{eq1D}). 
\begin{theorem}
\label{theo:01}
Consider that the initial datum $T_0$ is nonnegative and $T_0\in L^\infty(0,1)$. Then the numerical solution given by the implicit scheme (\ref{sch:03}) coupled with (\ref{sch:flux})-(\ref{sch:bc}) is uniformly stable in $L^\infty(\RR^+\times (0,1))$ and converges to the weak solution $T$ of (\ref{eq1D}) when $h=(\Delta t,\Delta s)$ goes to zero.
\end{theorem}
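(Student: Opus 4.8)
The plan is to combine a priori estimates on the discrete solution (to extract a limit) with a weak consistency argument (to identify the limit as the weak solution of (\ref{eq1D})). First I would establish solvability and uniform $L^\infty$ stability of the nonlinear system (\ref{sch:03}). Rewriting (\ref{sch:03}) as a fixed point on $(T_i^{n+1})_i$, one shows using the monotonicity of $x\mapsto x^{7/2}$ in the flux (\ref{sch:flux}) together with the sign-definite boundary terms (\ref{sch:bc}) that the discrete maximum principle holds: if $0\le T_i^n\le \|T_0\|_\infty$ for all $i$, then the same bound propagates to $T_i^{n+1}$, and positivity is preserved. A standard degree-theoretic or monotone-operator argument then yields existence (and uniqueness, by the strict monotonicity of the flux and the dissipative boundary conditions) of a solution lying in $[0,\|T_0\|_\infty]$. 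Multiplying (\ref{sch:03}) by $\Delta s_i\,T_i^{n+1}$ and summing over $i$ and $n$ gives the discrete energy estimate
\begin{equation*}
\frac12\sum_i \Delta s_i |T_i^{N}|^2 + \Delta t\sum_{n}\sum_{i}\frac{(T_{i+1}^{n+1})^{7/2}-(T_i^{n+1})^{7/2}}{\Delta s_{i+1}+\Delta s_i}\bigl(T_{i+1}^{n+1}-T_i^{n+1}\bigr) + \gamma\Delta t\sum_n\bigl(|T_0^{n+1}|^2+|T_{n_s-1}^{n+1}|^2\bigr) \le \frac12\sum_i\Delta s_i|T_i^0|^2,
\end{equation*}
which, together with the elementary inequality $(a^{7/2}-b^{7/2})(a-b)\gtrsim (a^{5/4}-b^{5/4})^2$ for $a,b\ge 0$, controls the discrete $L^2(H^1)$-norm of $|T_h|^{5/4}$ uniformly in $h$, hence a discrete bound on $T_h$ in $L^p(Q)$ with $p=9/2$ (equivalently $a(T)=|T|^{5/2}$, $(p-2)/2=5/4$).

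Next I would introduce the piecewise-constant reconstruction $T_h$ of $(T_i^n)$ and a companion reconstruction of the discrete gradient, and pass to the limit. The $L^\infty$ bound gives weak-$\ast$ convergence $T_h\rightharpoonup T$; the discrete $L^2(H^1)$ bound on $|T_h|^{5/4}$ and a discrete Aubin--Lions argument (using (\ref{sch:03}) to bound the discrete time derivative in $L^{p'}(W^{1,p})^*$, exactly as in the continuous proof above) upgrade this to strong convergence of $T_h$ in $L^p(Q)$ and a.e. convergence, so that $|T_h|^{5/4}\operatorname{sgn}T_h \rightharpoonup |T|^{5/4}\operatorname{sgn}T$ in $L^2(\RR^+,H^1(\Omega))$ and its trace converges weakly on $\partial\Omega$. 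One then multiplies the scheme by the nodal sample of a test function $\varphi\in\mathcal D$, performs discrete integration by parts (Abel summation), and checks consistency of each term: the time term and the source term are routine; the flux term is rewritten as $-\int\!\!\int |T_h|^{5/2}\partial_s T_h\,\partial_s\varphi_h$ up to the first-order boundary error noted after (\ref{sch:bc}), which vanishes as $h\to 0$; the boundary terms (\ref{sch:bc}) converge to $\gamma[T(\cdot,1)\varphi(\cdot,1)+T(\cdot,0)\varphi(\cdot,0)]$ by the trace convergence. Passing to the limit recovers the weak formulation of Definition 2.1, and by the uniqueness statement in Theorem 2.1 the whole sequence converges.

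The main obstacle is the nonlinear flux term in the consistency step: one must justify that $\frac{4K_\parallel}{7}\frac{(T_{i+1}^{n+1})^{7/2}-(T_i^{n+1})^{7/2}}{\Delta s_{i+1}+\Delta s_i}$, tested against $\partial_s\varphi$, converges to $\int\!\!\int K_\parallel|T|^{5/2}\partial_s T\,\partial_s\varphi$. The natural route is to write $(T_{i+1})^{7/2}-(T_i)^{7/2} = \frac{7}{5}\bigl((T_{i+1})^{5/4}+(T_i)^{5/4}\bigr)\cdot(\text{something})$ — more precisely, to observe $\partial_s(\Lambda(T)) = \frac{2}{7}\cdot\frac{7}{2}|T|^{5/2}\partial_s T$ with $\Lambda(T)\propto |T|^{5/2}T$, and to combine the weak $H^1$-convergence of $|T_h|^{5/4}\operatorname{sgn}T_h$ with the a.e. (strong $L^p$) convergence of $T_h$ to pass to the limit in the product; the degeneracy at $T=0$ is harmless because the a.e. convergence makes $|T_h|^{5/4}$ converge strongly in $L^2$, so the factor multiplying the weakly convergent gradient converges strongly. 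Handling the nonuniform mesh weights $\Delta s_{i+1}+\Delta s_i$ under (\ref{cond:mesh}) introduces only first-order errors, consistent with the order of the scheme. All remaining steps (discrete maximum principle, discrete Aubin--Lions, Abel summation) are by now standard for finite-volume approximations of degenerate parabolic equations and follow the template of the continuous proof above.
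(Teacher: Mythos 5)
Your plan follows essentially the same route as the paper: a discrete maximum principle giving the uniform $L^\infty$ bound and positivity, a discrete energy estimate (multiplying by $\Delta s_i\,T_i^{n+1}$) controlling a discrete $L^2(H^1)$ seminorm of a power of $T$, then compactness of the piecewise-constant reconstruction and discrete integration by parts against test functions, with the limit identified via the uniqueness of the weak solution. The only quibble is a bookkeeping slip: the quantity whose discrete gradient is controlled by $(a^{7/2}-b^{7/2})(a-b)\gtrsim (a^{9/4}-b^{9/4})^2$ is $|T_h|^{5/4}T_h\sim T_h^{9/4}$, not $|T_h|^{5/4}$ --- and in fact the paper exploits the $L^\infty$ bound to work directly and more simply with $T_h^{7/2}$, whose discrete gradient appears verbatim in the flux, so the consistency of the nonlinear term reduces to weak convergence of $D_hT_h^{7/2}$ rather than a strong-times-weak product argument.
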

We start with a stability result and then prove  convergence of the numerical solution to the unique weak solution by consistency of the scheme.

Let us first investigate the stability property and prove some {\it a priori} estimates on the numerical solution uniformly with respect to the mesh size $h$.
\begin{proposition}
\label{prop:02}
Consider that the initial datum $T_0$ is nonnegative and $T_0\in L^\infty(0,1)$. Then the numerical solution given by the implicit scheme (\ref{sch:03}) coupled with (\ref{sch:flux})-(\ref{sch:bc}) is unconditionally stable, {\it i.e.}
\begin{equation}
\label{bound:linf}
0\leq T_i^n \leq \|T_0\|_{L^\infty},
\end{equation}
and
\begin{equation}
\label{bound:l2}
\sum_{i=0}^{n_s-1} \Delta s_i \,|T_i^{n+1}|^2 \,\,\leq \,\,\sum_{i=0}^{n_s-1} \Delta s_i \,|T_i^0|^2. 
\end{equation}
Moreover, the following discrete semi-norm is uniformly bounded
\begin{equation}
\label{bound:derivee}
\sum_{n=0}^{N_t}\sum_{i=0}^{n_s-2}\Delta t \frac{\left[\left(T^{n+1}_{i+1}\right)^{7/2}-\left(T^{n+1}_{i}\right)^{7/2}\right]^2}{\Delta s_i + \Delta s_{i+1}}\,\leq\, C,
\end{equation}
where the constant $C>0$ only depends on the initial datum $T_0$.
\end{proposition}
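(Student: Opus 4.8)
The plan is to prove the three bounds in order, since each relies on the previous one. For the $L^\infty$ bound \eqref{bound:linf}, I would argue by induction on $n$. Assume $0 \le T_i^n \le \|T_0\|_{L^\infty}$ for all $i$. To get nonnegativity of $T^{n+1}$, suppose for contradiction that $\min_i T_i^{n+1} = T_{i_0}^{n+1} < 0$. At the index $i_0$ realizing the minimum, examine the scheme \eqref{sch:03}: the right-hand side is $(\F_{i_0+1/2}^{n+1} - \F_{i_0-1/2}^{n+1})/\Delta s_{i_0}$. Since $x \mapsto x^{7/2}$ (extended as an odd or as $|x|^{5/2}x$-type increasing function, consistent with the $|T|^{5/2}$ in \eqref{eq1D}) is nondecreasing, each interior flux difference $\F_{i_0+1/2}^{n+1}-\F_{i_0-1/2}^{n+1}$ has the sign that makes $T_{i_0}^{n+1} - T_{i_0}^n \ge 0$ at an interior minimum; the boundary fluxes $\pm\gamma T$ with $\gamma \ge 0$ and $T_{i_0}^{n+1}<0$ also cooperate (at $i_0=0$ the incoming flux $+\gamma T_0^{n+1}<0$ pushes the value up, similarly at $i_0 = n_s-1$). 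Hence $T_{i_0}^{n+1} \ge T_{i_0}^n \ge 0$, a contradiction. For the upper bound, a symmetric argument at the maximum: at an interior maximum the flux difference is $\le 0$ so $T_{i_0}^{n+1} \le T_{i_0}^n \le \|T_0\|_\infty$; at a boundary maximum the outgoing flux $-\gamma T$ only decreases the value further. This also shows the monotonicity needed to invert $x\mapsto x^{7/2}$ legitimately on the relevant range.

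For the $L^2$ bound \eqref{bound:l2}, I would test the scheme with $T_i^{n+1}$, i.e. multiply \eqref{sch:03} by $\Delta s_i\, T_i^{n+1}$ and sum over $i$. Using the elementary inequality $(a-b)a \ge \tfrac12(a^2 - b^2)$ on the time-difference term gives $\tfrac12\sum_i \Delta s_i(|T_i^{n+1}|^2 - |T_i^n|^2)$ on the left. A discrete integration by parts (Abel summation) on the flux term produces $-\sum_{i=0}^{n_s-2}\F_{i+1/2}^{n+1}(T_{i+1}^{n+1}-T_i^{n+1})$ plus boundary contributions. By \eqref{sch:flux} the flux is $\tfrac{4K_\parallel}{7}\bigl((T_{i+1}^{n+1})^{7/2}-(T_i^{n+1})^{7/2}\bigr)/(\Delta s_{i+1}+\Delta s_i)$, and since $t\mapsto t^{7/2}$ is monotone nondecreasing on $[0,\infty)$, the product $\bigl((T_{i+1})^{7/2}-(T_i)^{7/2}\bigr)(T_{i+1}-T_i) \ge 0$; hence this term has a favorable sign. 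The boundary terms contribute $-\gamma\bigl(|T_0^{n+1}|^2 + |T_{n_s-1}^{n+1}|^2\bigr) \le 0$. Dropping these nonpositive terms yields \eqref{bound:l2}.

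For the discrete semi-norm bound \eqref{bound:derivee}, I would redo the energy estimate but \emph{keep} the dissipation term rather than discarding it. From the same computation, for each $n$,
\[
\tfrac12\sum_i \Delta s_i|T_i^{n+1}|^2 + \Delta t\,\tfrac{4K_\parallel}{7}\sum_{i=0}^{n_s-2}\frac{\bigl((T_{i+1}^{n+1})^{7/2}-(T_i^{n+1})^{7/2}\bigr)(T_{i+1}^{n+1}-T_i^{n+1})}{\Delta s_{i+1}+\Delta s_i} \le \tfrac12\sum_i \Delta s_i|T_i^n|^2.
\]
Summing over $n=0,\dots,N_t$ telescopes the energy and leaves $\tfrac{4K_\parallel}{7}\sum_n\Delta t\sum_i \bigl((T_{i+1}^{n+1})^{7/2}-(T_i^{n+1})^{7/2}\bigr)(T_{i+1}^{n+1}-T_i^{n+1})/(\Delta s_{i+1}+\Delta s_i) \le \tfrac12\sum_i\Delta s_i|T_i^0|^2$. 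The remaining issue is that \eqref{bound:derivee} asks for $\bigl((T_{i+1}^{n+1})^{7/2}-(T_i^{n+1})^{7/2}\bigr)^2$ in the numerator, not the mixed product. To bridge this I would use the $L^\infty$ bound \eqref{bound:linf}: on $[0,\|T_0\|_\infty]$ one has, by the mean value theorem, $(T_{i+1})^{7/2}-(T_i)^{7/2} \le \tfrac72\|T_0\|_\infty^{5/2}(T_{i+1}-T_i)$ when $T_{i+1}\ge T_i$ (and symmetrically), so $\bigl((T_{i+1})^{7/2}-(T_i)^{7/2}\bigr)^2 \le \tfrac72\|T_0\|_\infty^{5/2}\bigl((T_{i+1})^{7/2}-(T_i)^{7/2}\bigr)(T_{i+1}-T_i)$. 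Substituting gives \eqref{bound:derivee} with $C = \tfrac{7}{8K_\parallel}\cdot\tfrac72\|T_0\|_\infty^{5/2}\|T_0\|_{L^2}^2$ or similar, depending only on $T_0$.

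The main obstacle is the bookkeeping in the discrete integration by parts: one must handle the boundary flux terms \eqref{sch:bc} carefully and confirm their sign is nonpositive, and one must be careful that the "ghost" contributions at $i=-1$ and $i=n_s-1$ combine correctly with the Abel summation so that no spurious term survives. The sign structure relies crucially on $\gamma \ge 0$ and on the monotonicity of $t \mapsto t^{7/2}$, which in turn relies on \eqref{bound:linf}; this is why the three estimates must be proved in this order.
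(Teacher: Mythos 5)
Your proposal is correct, and for the $L^2$ estimate \eqref{bound:l2} and the semi-norm bound \eqref{bound:derivee} it follows the paper's proof essentially verbatim: test with $T_i^{n+1}$, Abel-sum the flux term, use the sign of $\bigl((T_{i+1}^{n+1})^{7/2}-(T_i^{n+1})^{7/2}\bigr)(T_{i+1}^{n+1}-T_i^{n+1})$ and of the boundary contributions $-\gamma\bigl(|T_0^{n+1}|^2+|T_{n_s-1}^{n+1}|^2\bigr)$, then telescope in $n$ and convert the mixed product into the square via the Lipschitz bound $|T_{i+1}^{7/2}-T_i^{7/2}|\le C|T_{i+1}-T_i|$ furnished by \eqref{bound:linf}. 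The one genuine difference is the $L^\infty$ bound. The paper does not argue at an extremal index; instead it runs the same weighted-summation computation once for a general convex $\phi$, using $\phi(T_i^{n+1})-\phi(T_i^n)\le\phi'(T_i^{n+1})(T_i^{n+1}-T_i^n)$, and then specializes: $\phi(x)=x^-$ gives $\sum_i\Delta s_i(T_i^{n+1})^-\le\sum_i\Delta s_i(T_i^0)^-=0$ (nonnegativity), $\phi(x)=(x-M)^+$ with $M=\|T^0\|_{L^\infty}$ gives the upper bound, and $\phi(x)=x^2/2$ gives \eqref{bound:l2} and \eqref{bound:derivee}. The advantage of that route is uniformity: one integration-by-parts computation, with the dissipation term nonnegative because $\phi'$ is nondecreasing and $x\mapsto x^{7/2}$ is increasing, yields all three estimates, and it sidesteps the case analysis your maximum-principle argument needs at the boundary cells $i_0=0$ and $i_0=n_s-1$ (where, as you note, you must first establish nonnegativity before the term $-\gamma T_{i_0}^{n+1}$ has the right sign for the upper bound). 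Your comparison-principle argument is equally valid and arguably more elementary, but note it silently uses existence of a solution to the nonlinear implicit system and the extension of $x^{7/2}$ as the increasing function $|x|^{5/2}x$ off $[0,\infty)$ — both of which you do flag, and which the paper's truncation argument requires as well.
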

\begin{proof}
Let us consider a convex function $\phi\in \C^1(\RR,\RR)$,  then we have
\begin{eqnarray}
\phi(T^{n+1}_i) - \phi(T_i^n)  \,\leq\, \phi'(T^{n+1}_i)(T^{n+1}_i-T^{n}_i).
\label{eqconvexdiscret}
\end{eqnarray}
Thus, we multiply the scheme (\ref{sch:03}) by $\Delta t\,\Delta s_i\,\phi^\prime(T_i^{n+1})$ and sum over $i\in\{0,\dots,n_s-1\}$, it gives 
\begin{eqnarray*}
\sum_{i=0}^{n_s-1}\Delta s_i\,\phi(T^{n+1}_i)\,-\,\sum_{i=0}^{n_s-1}\Delta s_i\,\phi(T^{n}_i) &\leq& \Delta t\,\sum_{i=0}^{n_s-1}\phi'(T^{n+1}_i)\,\left( \F_{i+1/2}^{n+1}-\F_{i-1/2}^{n+1} \right),
\\
&\leq& -\,\Delta t\,\sum_{i=0}^{n_s-2}\F_{i+1/2}^{n+1}\,\left( \phi'(T^{n+1}_{i+1})\,-\, \phi'(T^{n+1}_{i})\right)
\\
&-& \Delta t\,\F_{-1/2}^{n+1}\, \phi'(T^{n+1}_{0}) \,+\, \Delta t\,\F_{n_s-1/2}^{n+1}\, \phi'(T^{n+1}_{n_s-1}).
\end{eqnarray*}
Using the definition of the numerical flux (\ref{sch:flux}) and the discrete boundary conditions (\ref{sch:bc}), we get 
\begin{eqnarray*}
\sum_{i=0}^{n_s-1}\Delta s_i\,\phi(T^{n+1}_i)\,-\,\sum_{i=0}^{n_s-1}\Delta s_i\,\phi(T^{n}_i) \,\leq\, -\gamma\,\Delta t\,\phi'(T^{n+1}_0)T^{n+1}_0\,-\,\gamma\,\Delta t\,\phi'(T^{n+1}_{n_s-1})T^{n+1}_{n_s-1}
\\
-\,\frac{4\,K_\parallel}{7}\,\Delta t\;\sum_{i=0}^{n_s-2}\left[\phi'(T^{n+1}_{i+1})-\phi'(T^{n+1}_{i})\right]\,\frac{\left(T^{n+1}_{i+1}\right)^{7/2}-\left(T^{n+1}_{i}\right)^{7/2}}{\Delta s_i+\Delta s_{i+1}}.
\end{eqnarray*}
Observing that a similar inequality holds true when  $\phi(x)$ is only Lipschitzian, we take $\phi(x)=x^-$, and prove the nonnegativity of the approximation $T_i^n$, that is,
    \begin{equation*}
      0\leq\,\sum_{i=0}^{n_s-1}\Delta s_i \,(T^{n+1}_i)^-\,\leq\,\sum_{i=0}^{n_s-1}\Delta s_i\,(T^{0}_i)^-\,=\,0.
    \end{equation*}
Therefore, assuming that $T^0_i\geq0 $, for all $0\leq i \leq n_s-1$, we obtain that $T^n_i\geq0$ for all $0\leq i\leq n_s -1$ and $n\in\NN$. Moreover, taking $\phi(x)=(x-M)^+$, with $M=\|T^0\|_{L^\infty}$, we have
\begin{equation*}
0\leq \sum_{i=0}^{n_s-1}\Delta s_i\,(T^{n+1}_i-M)^+\,\leq\,\sum_{i=0}^{n_s-1}\Delta s_i\,(T^{n}_i-M)^+\,\leq\,\sum_{i=0}^{n_s-1}\Delta s_i\,(T^{0}_i-M)^+=0.
\end{equation*}
Hence we deduce that  $0\leq T^n_i\leq M$, for all $0\leq i \leq n_s-1$.

Then we take $\phi(x)=x^2/2$, which yields that
$$
\sum_{i=0}^{n_s-1}\frac{\Delta s_i}{2}\,(T^{n+1}_i)^2\,-\,\sum_{i=0}^{n_s-1}\frac{\Delta s_i}{2}\,(T^{n}_i)^2 \,\leq\,C\,\Delta t\sum_{i=0}^{n_s-2}\left[T^{n+1}_{i+1}-T^{n+1}_{i}\right]\,\frac{\left(T^{n+1}_{i+1}\right)^{7/2}-\left(T^{n+1}_{i}\right)^{7/2}}{\Delta s_i+\Delta s_{i+1}}
$$ 
and use the fact that $T_i^n$ is uniformly bounded to observe that
$$
|T_{i+1}^{7/2} \,-\,T_{i}^{7/2}|\,\leq \,C\,|T_{i+1}\,-\,T_i|.
$$ 
Thus, we have the following inequality
\begin{eqnarray*}
  \frac{1}{2}\sum_{i=0}^{n_s-1}\Delta s_i\,(T^{n+1}_i)^2&\leq& \frac{1}{2}\sum_{i=0}^{n_s-1}\Delta s_i\,(T^{n}_i)^2
\\
&-&C\Delta t\,\sum_{i=0}^{n_s-2}\frac{\left[\left(T^{n+1}_{i+1}\right)^{7/2}-\left(T^{n+1}_{i}\right)^{7/2}\right]^2}{\Delta s_i + \Delta s_{i+1}}.
\end{eqnarray*}
Finally we sum over  $n\in\{0,\dots,N_t\}$ and immediately deduce that there exists a constant $C>0$ only depending on the initial datum  $T_0$ such that  
$$
\Delta t\,\sum_{n=0}^{N_t}\sum_{i=1}^{n_s-1}\frac{\left[\left(T^{n+1}_{i}\right)^{7/2}-\left(T^{n+1}_{i-1}\right)^{7/2}\right]^2}{\Delta s_i + \Delta s_{i+1}}\,\leq\, C.
$$
\end{proof}
\subsection{Proof of Theorem~\ref{theo:01}}
To prove the convergence of the discrete solution $(T^n_i)_{i,n}$ towards the weak solution $T$ to (\ref{eq1D}), we construct  a  piecewise approximation $T_h$, where $h=(\Delta t,\Delta s)$,  such that 
$$
T_h(t,s):=\sum_{n\in\NN}\sum_{i=0}^{n_s-1} T_i^{n}\,\mathbf{1}_{C_i} (s)\, \mathbf{1}_{[t^{n}, t^{n+1}[} (t),
$$
From the uniform bounds proved in Proposition~\ref{prop:02}, we get that there exits a sub-sequence, still denoted by $(T_h)_h$, such that $T_h$ converges to $T\in L^\infty(\RR^+\times(0,1))$ as $m\to\infty$ in the  weak-* topology, whereas using (\ref{bound:derivee}) we also get that $T_h^{7/2}$ converges strongly in $L^2(\RR^+\times(0,1)$ to $T_h^{7/2}$.

Now let us prove that $T_h$ converges to the weak solution to (\ref{eq1D}) when $h$ goes to zero. We consider $\varphi\in \C^\infty_c(\RR^+\times(0,1))$, and we denote $\varphi_i^n=\varphi(t^n,s_i)$. Then we multiply the  scheme (\ref{sch:03}) by  $\Delta s_i\varphi_i^n$, and sum over $i\in\{0,\ldots,n_s-1\}$ and $n\in\NN$, we obtain
\begin{equation*}
  \E^1_h\,+\,\E^2_h\,=\,0,
\end{equation*}
with $\E^1_h$ is related to the time discretization and is given by 
\begin{eqnarray*}
  \E^1_h\,:=\,\sum_{n\in\NN}\sum_{i=0}^{n_s-1}\Delta s_i\,(T_i^{n+1}\,-\,T_i^{n})\,\varphi_i^n,
 \end{eqnarray*}
whereas $\E_h^2$ is related to the space discretization and reads 
\begin{eqnarray*}
\E^2_h\,:=\,\Delta t\sum_{n\in\NN}\sum_{i=0}^{n_s-1} \left(\F_{i+1/2}^{n+1} - \F_{i-1/2}^{n+1}\right)\,\varphi_i^n.
\end{eqnarray*}
On the one hand, we consider $\E^1_h$ and perform a discrete integration by part with respect to $n\in\NN$. Using that $\varphi$ is compactly supported for large $t\in\RR^+$, it yields
\begin{eqnarray*}
  \E^1_h &=&-\sum_{n\in\NN^*}\sum_{i=0}^{n_s-1}\Delta s_i\,T_i^n\,(\varphi_i^{n}-\varphi_i^{n-1}) \,-\,\sum_{i=0}^{n_s-1}\Delta s_i\,T_i^{0}\,\varphi_i^{0}.
\\ 
&=&- \int_{\RR^+}\int_0^1 T_h(t+\Delta t,s)\,\partial_t\varphi(t,s)dsdt -\int_0^1 T_h(0,s)\,\varphi(0,s)ds \,\,+\, <\mu_h^1,\varphi>   
\end{eqnarray*}
where the additional term $<\mu_h^1,\varphi>$ is given by 
\begin{eqnarray*}
<\mu_h^1,\varphi>  &=& -\sum_{n\in\NN^*}\sum_{i=0}^{n_s-1}\int_{t^{n-1}}^{t^n}\int_{C_i}\int_s^{s_i}T_i^n\,\partial_{ts}^2\varphi(t,\eta)d\eta dsdt \\
&-& \sum_{i=0}^{n_s-1} \int_{C_i}\int_s^{s_i}T_i^0\,\partial_s\varphi(0,\eta)d\eta\,ds
\end{eqnarray*}
and satisfies the following estimate
\begin{eqnarray*}
|<\mu_h^1,\varphi>|\,\leq\, C\,\Delta s\,\left(\,\|\partial_{ts}^2\varphi\|_{L^1} \,+\,  \|\partial_{s}\varphi(0,.)\|_{L^1} \,\right).
\end{eqnarray*}
Therefore, when $h$ tends to zero, we have
$$
\E_h^1 \rightarrow - \int_{\RR^+}\int_0^1 T(t,s)\,\partial_t\varphi(t,s)dsdt -\int_0^1 T_0(s)\,\varphi(0,s)ds.
$$
 On the other hand, we apply a first discrete integration by part with respect to $i\in\{0,\ldots,n_s-1\}$ to the second term $\E_h^2$, which can be written as
\begin{eqnarray*}
  \E_h^2 &=& -\frac{4\,K_\parallel}{7}\,\Delta t\sum_{n\in\NN}\sum_{i=0}^{n_s-2} \frac{\left(T_{i+1}^{n+1}\right)^{7/2}-\left(T_{i}^{n+1}\right)^{7/2}}{\Delta s_{i+1}+\Delta s_i}\,\left(\varphi_{i+1}^n \,-\, \varphi_i^n  \right)
\\
&-& \gamma\,\Delta t\sum_{n\in\NN}  T_{0}^{n+1}\,\varphi_{0}^n \,-\,  \gamma\,\Delta t\sum_{n\in\NN} T_{n_s-1}^{n+1}\,\varphi_{n_s-1}^n.
\end{eqnarray*}
Then, introducing $D_h T_h$ a discrete  approximation of the gradient of $T_h$ by  
$$
D_hT_h(t,s):=\sum_{n\in\NN}\sum_{i=0}^{n_s-1} 2\,\frac{T_{i+1}^{n}-T_i^n}{\Delta s_i+\Delta s_{i+1}}\,\mathbf{1}_{(s_i,s_{i+1})} (s)\, \mathbf{1}_{[t^{n}, t^{n+1}[} (t),
$$
we have 
\begin{eqnarray*}
\E_h^2 &=& \frac{2\,K_\parallel}{7}\,\int_{\RR^+}\int_{s_0}^{s_{n_s-1}} D_{h} T_h^{7/2}(t+\Delta t,s)\,\partial_s\varphi(t,s)dsdt 
\\
&-& \gamma\,\int_{\RR^+} T_h(t+\Delta t,0)\varphi(t,s_0) +   T_h(t+\Delta t,1)\varphi(t,s_{n_s-1})dt
\end{eqnarray*}
Passing to the limit $h\rightarrow 0$, we  get that
$$
\E_h^2\rightarrow \frac{2\,K_\parallel}{7}\,\int_{\RR^+}\int_{0}^{1} \partial_sT^{7/2}(t,s)\,\partial_s\varphi(t,s)dsdt - \gamma\,\int_{\RR^+} T(t,0)\varphi(t,0) +   T(t,1)\varphi(t,1)dt.
$$ 
Finally, we conclude that $T$ is a weak solution of (\ref{eq1D}). By uniqueness of the solution to (\ref{eq1D}), it yields that the sequence $(T_h)_{h}$ converges to the weak solution of (\ref{eq1D}).

The implicit scheme (\ref{sch:03}) is unconditionally stable, but it requires the numerical resolution of a nonlinear system. For this purpose a Newton method is applied which increases considerably the computational cost and makes this method inefficient. Another strategy would consist in applying a semi-implicit scheme for the time discretization, but it still requires the implementation of a new linear system at each time iteration and the computational cost remains too important.  In the following we propose a numerical scheme inspired by the work of F. Filbet \& S. Jin \cite{bibAclass} to handle with this problem.

\subsection{An implicit-explicit (IMEX) scheme}
In \cite{bibAclass}, the authors proposed to handle with a stiff and nonlinear problem. The main point is to write the nonlinear problem in a different form  in order to split the nonlinear operator in the sum of a dissipative linear part, which can be solved in an implicit way and a non dissipative and nonlinear part which will be solved with a time explicit solver. The main difficulty is to find an adequate decomposition of the operator. For instance the nonlinear diffusive operator can be written as
$$
K_\parallel\partial_{s}\left(T^{5/2}\partial_{s}T\right)  \,\,=\,\, \nu\,\partial_{ss}^2 T \,+\,\partial_{s}\left(\left( K_\parallel\,T^{5/2}\,-\,\nu\right)\,\partial_s T\right)
$$
and the time discretization to (\ref{eq1D}) becomes
\begin{equation}
\label{sch:04}
 \left\{
\begin{array}{l}
\displaystyle\frac{ T^{n+1}-T^n}{\Delta t}-\partial_s\left(\nu\partial_sT^{n+1}\right)=\partial_s\left(\left(K_\parallel\left(T^n\right)^{5/2}-\nu\right)\partial_sT^{n}\right),
\\
\,
\\
\displaystyle-\nu\partial_sT^{n+1}(0)+\gamma T^{n+1}(0)=\left(K_\parallel \left(T^n(0)\right)^{5/2}-\nu\right)\partial_sT^{n}(0),
\\
\,
\\
 \displaystyle -\nu\,\partial_sT^{n+1}(1)-\gamma T^{n+1}(1)=\left(K_\parallel \left(T^n(1)\right)^{5/2}-\nu\right)\partial_sT^{n}(1). 
\end{array}\right.
\end{equation}
To choose an appropriate $\nu$ for the scheme (\ref{sch:04}), we perform an energy estimate of the numerical approximation. 
\begin{proposition}
\label{prop:03}
Assume that the viscosity term $\nu$ is such that  
\begin{equation}
\label{choix:nu}
K_\parallel\left\|T^n\right\|_\infty^{5/2}\,\leq\,\nu,\quad \forall n\in\NN.
\end{equation}
Then the numerical solution satisfies the following 
\begin{equation}
\label{esti:sch:04}
 \frac{1}{2}\int_0^1\left(T^{n+1}\right)^2ds\,+\,\frac{\nu\Delta t}{2}\,\int_0^1\left|\partial_sT^{n+1}\right|^2ds \,\leq\,
\frac{1}{2}\int_0^1\left(T^{n}\right)^2ds \,+\, \frac{\nu\Delta t}{2}\,\int_0^1\left|\partial_sT^{n}\right|^2ds.
\end{equation}
\end{proposition}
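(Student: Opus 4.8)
The plan is to perform a discrete-in-time energy estimate: I would test the first equation of (\ref{sch:04}) against $T^{n+1}$ and integrate over $(0,1)$. For the time-difference term I use the elementary convexity inequality $a\,(a-b)\ge\frac12(a^2-b^2)$, which gives
\[
\int_0^1 \frac{T^{n+1}-T^n}{\Delta t}\,T^{n+1}\,ds \;\ge\; \frac{1}{2\,\Delta t}\left(\int_0^1 (T^{n+1})^2\,ds - \int_0^1 (T^n)^2\,ds\right).
\]
The two diffusion terms are integrated by parts in $s$: the implicit one contributes $\nu\int_0^1|\partial_sT^{n+1}|^2\,ds$ together with the boundary term $-\nu\bigl[\partial_sT^{n+1}\,T^{n+1}\bigr]_0^1$, while the explicit right-hand side contributes $-\int_0^1\bigl(K_\parallel(T^n)^{5/2}-\nu\bigr)\partial_sT^n\,\partial_sT^{n+1}\,ds$ together with the boundary term $\bigl[\bigl(K_\parallel(T^n)^{5/2}-\nu\bigr)\partial_sT^n\,T^{n+1}\bigr]_0^1$.

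The key step is to eliminate the boundary contributions using the two boundary conditions in (\ref{sch:04}). Substituting $\bigl(K_\parallel(T^n(0))^{5/2}-\nu\bigr)\partial_sT^n(0)=-\nu\partial_sT^{n+1}(0)+\gamma T^{n+1}(0)$ and the analogous identity at $s=1$, the terms $\nu\,\partial_sT^{n+1}\,T^{n+1}$ coming from the two integrations by parts cancel one another exactly, and the only surviving boundary term is $-\gamma\bigl(T^{n+1}(0)\bigr)^2-\gamma\bigl(T^{n+1}(1)\bigr)^2$, which is nonpositive since $\gamma\ge0$. Discarding it leaves
\[
\int_0^1 \frac{T^{n+1}-T^n}{\Delta t}\,T^{n+1}\,ds + \nu\int_0^1|\partial_sT^{n+1}|^2\,ds \;\le\; \int_0^1 \bigl(\nu-K_\parallel(T^n)^{5/2}\bigr)\,\partial_sT^n\,\partial_sT^{n+1}\,ds.
\]

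To close the estimate I absorb the cross term. The hypothesis (\ref{choix:nu}) ensures the pointwise bound $0\le\nu-K_\parallel(T^n)^{5/2}\le\nu$, so Young's inequality $\partial_sT^n\,\partial_sT^{n+1}\le\frac12|\partial_sT^n|^2+\frac12|\partial_sT^{n+1}|^2$ yields
\[
\int_0^1\bigl(\nu-K_\parallel(T^n)^{5/2}\bigr)\,\partial_sT^n\,\partial_sT^{n+1}\,ds \;\le\; \frac{\nu}{2}\int_0^1|\partial_sT^n|^2\,ds + \frac{\nu}{2}\int_0^1|\partial_sT^{n+1}|^2\,ds.
\]
Moving $\frac{\nu}{2}\int_0^1|\partial_sT^{n+1}|^2\,ds$ to the left, inserting the convexity lower bound for the time-difference term, and multiplying through by $\Delta t$ gives exactly (\ref{esti:sch:04}).

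The only point requiring real care is the boundary bookkeeping in the middle step: one has to check that the signs built into the two boundary conditions of (\ref{sch:04}) are precisely those that make the implicit and explicit boundary terms cancel while leaving behind the dissipative quantity $-\gamma(T^{n+1}(0))^2-\gamma(T^{n+1}(1))^2$ rather than its sign-reversed counterpart. Everything else is routine, and the computation makes it transparent that the threshold (\ref{choix:nu}) on $\nu$ is dictated exactly by the requirement that $\nu-K_\parallel(T^n)^{5/2}$ stay nonnegative, so that Young's inequality applies with the right sign and the term $\frac{\nu}{2}\int_0^1|\partial_sT^{n+1}|^2\,ds$ can be absorbed on the left.
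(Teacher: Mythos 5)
Your proof is correct and follows essentially the same route as the paper: test against $T^{n+1}$, use the convexity inequality for the time-difference term, integrate by parts so the boundary conditions of (\ref{sch:04}) leave only the dissipative term $-\gamma\bigl((T^{n+1}(0))^2+(T^{n+1}(1))^2\bigr)$, and absorb the cross term by Young's inequality under (\ref{choix:nu}). The only cosmetic difference is that you bound the coefficient $\nu-K_\parallel(T^n)^{5/2}$ by $\nu$ before applying the symmetric Young inequality, whereas the paper applies a weighted Young inequality with parameter $\varepsilon$ and then sets $\varepsilon=\nu$; the two computations are equivalent.
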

\begin{proof}
We multiply (\ref{sch:04}) by $T^{n+1}$ and integrate on $s\in(0,1)$, hence we have
\begin{eqnarray*}
\frac{1}{2}\int_0^1\left|T^{n+1}\right|^2ds-\frac{1}{2}\int_0^1\left|T^{n}\right|^2ds &\leq&\int_0^1\left(\left(T^{n+1}\right)^2- T^{n+1}T^n\right)ds
\\
&\leq&\Delta t\int^1_0\left(\left(\nu-K_\parallel\left|T^n\right|^{5/2}\right)\partial_s T^{n}\partial_s T^{n+1}-\nu\left(\partial_s T^{n+1}\right)^2\right)ds
\\
&-&\Delta t\gamma\left(\left(T^{n+1}_0\right)^2+\left(T^{n+1}_{n_s-1}\right)^2\right).
\end{eqnarray*}
Using the assumption that $K_\parallel\left|T^n\right|^{5/2}\leq\nu$ and applying the Young's inequality, we obtain
\begin{eqnarray*}
\left(\nu\,-\,\left|T^n\right|^{5/2}\right)\,\partial_s T^{n}\,\partial_s T^{n+1}&\leq&\frac{\varepsilon}{2}\,\left(\partial_sT^n\right)^2\,+\,\frac{\left(\nu\,-\,K_\parallel\,\left|T^n\right|^{5/2}\right)^2}{2\,\varepsilon}\,\left(\partial_sT^{n+1}\right)^2
\\
&\leq&\frac{\varepsilon}{2}\,\left(\partial_sT^n\right)^2\,+\,\frac{\nu^2}{2\varepsilon}\,\left(\partial_sT^{n+1}\right)^2.
\end{eqnarray*}
Therefore with the choice $\varepsilon=\nu$, we have
\begin{equation*}
 \frac{1}{2}\int_0^1\left(T^{n+1}\right)^2ds\,+\,\frac{\nu}{2}\,\Delta t\,\int_0^1\left|\partial_sT^{n+1}\right|^2ds \,\leq\,
\frac{1}{2}\int_0^1\left(T^{n}\right)^2ds \,+\, \frac{\nu}{2}\,\Delta t\int_0^1\left|\partial_sT^{n}\right|^2ds.
\end{equation*}
Hence, the scheme (\ref{sch:04}) is stable when $K_\parallel\|T^n\|_\infty^{5/2}\leq\nu$. 
\end{proof}

Now, we can give the fully discrete scheme, called in the sequel IMEX, as follows
\begin{equation}
\label{sch:04bis}
\left\{
\begin{array}{l}  
\displaystyle\frac{T_i^{n+1}-T_i^n}{\Delta t} \,=\,\frac{\F_{i+1/2}^{n+1/2}-\F_{i-1/2}^{n+1/2}}{\Delta s_i}\,, \quad 0\leq i\leq n_s-1,
\\
\,
\\
T_i^0 = T_{0,i}, \quad 0\leq i\leq n_s-1,
\end{array}\right.
\end{equation}
with the numerical flux $\F_{i+1/2}^{n+1/2}$ is given for $i\in\{0,\dots,n_s-2\}$ by
\begin{equation}
\F_{i+1/2}^{n+1/2} \,=\,2\,\left(K_\parallel\,\left(\frac{\left(T_{i+1}^n\right)^{5/2}+\left(T_{i}^n\right)^{5/2}}{2}\right)\,-\,\nu\right) \,\frac{T_{i+1}^n-T_{i}^n}{\Delta s_{i+1}+\Delta s_i} \,+\, 2\,\nu\,\frac{T_{i+1}^{n+1}-T_{i}^{n+1}}{\Delta s_{i+1}+\Delta s_i} \,, 
\label{grenouille:0}
\end{equation}
whereas at the boundary $s=0$ and $s=1$, we apply the boundary conditions written in the form (\ref{sch:04}), 
\begin{equation}
\label{grenouille:1}
\F_{i+1/2}^{n+1/2}=
\left\{
\begin{array}{ll}
\displaystyle +\gamma \,T_{0}^{n+1}, & \textrm{if } i=-1,
\\
\,
\\
\displaystyle-\gamma \,T_{n_{s}-1}^{n+1}, &  \textrm{if } i=n_s-1.
\end{array}\right.
\end{equation}
Moreover, the viscosity $\nu>0$ is initially chosen as an upper bound of $K_\parallel \|T^0\|_{\infty}^{5/2}$ and is then readjusted along iterations $n\in\NN$ in order to satisfy the condition (\ref{choix:nu}):

\begin{algorithmic}
\STATE \,
\STATE{\bf Algorithm to compute $\nu$}
\STATE \,
\STATE $\nu \,:=\,2\,K_\parallel \|T^0\|_{\infty}^{5/2}$ and  $n=0$

\WHILE{$n \leq N_{T_{end}}$}

\STATE compute the numerical solution $T^{n+1}$
\STATE\,
\IF {$\nu\leq \frac{5}{4}\,K_\parallel \|T^{n+1}\|_{\infty}^{5/2}$} 
\STATE $\nu\gets 2\,K_\parallel \|T^{n+1}\|_{\infty}^{5/2}$
\ENDIF
\STATE\,
\IF {$\nu\geq 4\,K_\parallel \|T^{n+1}\|_{\infty}^{5/2}$} 
\STATE $\nu\gets K_\parallel \|T^{n+1}\|_{\infty}^{5/2}\,/\,2$
\ENDIF
\STATE\,
\STATE  $n\gets n+1$
\ENDWHILE
\end{algorithmic}

\subsection{Numerical results}
To compare the numerical results obtained with the different schemes, we take $\gamma=2$, $K_\parallel=1$ and the initial temperature is $  T^0=5$, whereas the final time of the numerical simulation is equal to $T_{end}=1$. On the one hand a reference solution is computed using the finite volume method with an explicit scheme (\ref{sch:02}) on a uniform grid with $n_s=450$. On the other hand, we basically compare both implicit (\ref{sch:03}) and IMEX (\ref{sch:04bis})-(\ref{grenouille:1}) schemes with different uniform grids with  $n_s=50$, $150$. Furthermore, we choose the time step equal to $\Delta t=10^{-2}$, $10^{-3}$, $10^{-4}$ and $10^{-5}$ respectively. 

\definecolor{gris25}{gray}{0.75}
\begin{table}[H]%
  \begin{center}
  \begin{tabular}{|l|l|l|l|l|l|}
    \hline
    \multicolumn{2}{|c|}{$\Delta t$}&$10^{-2}$&$10^{-3}$&$10^{-4}$&$10^{-5}$   \\
     \hline
     \hline
   \multirow {2}*{Implicit scheme (\ref{sch:03})}&$n_s=50$&0.05&0.31&2.24&22.06\\
   &\cellcolor{gris25}$n_s=150$&\cellcolor{gris25}0.60&\cellcolor{gris25}4.07&\cellcolor{gris25}27.49&\cellcolor{gris25}249.61\\
 \hline
 \multirow {2}*{IMEX scheme (\ref{sch:04bis})-(\ref{grenouille:1})}&$n_s=50$&0.01&0.09&0.63&5.34\\
&\cellcolor{gris25}$n_s=150$&\cellcolor{gris25}0.10&\cellcolor{gris25}0.24&\cellcolor{gris25}2.24&\cellcolor{gris25}21.97\\
 \hline
  \end{tabular}
\end{center}

\caption {\label{table1}Computational time for the implicit scheme (\ref{sch:03}) and the IMEX scheme (\ref{sch:04bis})-(\ref{grenouille:1}) in seconds at the final time of the numerical simulation $T_{end}=1$.}
\end{table}
We observe from Table~\ref{table1} that the IMEX scheme is much more efficient than the implicit scheme in terms of computational cost since the linear system corresponding to the implicit part does not depend on the iteration $n$ when the viscosity $\nu>0$ is large enough. For $n_s=50$, the computational time of the IMEX scheme is less than one fourth  of the one corresponding to the implicit scheme whereas for $n_s=150$, the implicit scheme is ten times more consuming than IMEX scheme. 
\begin{table}[H]%
  \begin{center}
  \begin{tabular}{|l|l|l|l|l|l|}
    \hline
   \multicolumn {2}{|c|} {$\Delta t$}&$10^{-2}$&$10^{-3}$&$10^{-4}$&$10^{-5}$   \\
     \hline\hline
   \multirow {2}*{Implicit scheme (\ref{sch:03})}&$n_s=50$&0.0580&0.0612&0.0617&0.0617\\
&\cellcolor{gris25}$n_s=150$&\cellcolor{gris25}0.0190&\cellcolor{gris25}0.0184&\cellcolor{gris25}0.0187&\cellcolor{gris25}0.0187\\
 \hline
 \multirow {2}*{IMEX scheme (\ref{sch:04bis})-(\ref{grenouille:1})}&$n_s=50$&0.0621&0.0600&0.0598&0.0598\\
&\cellcolor{gris25}$n_s=150$&\cellcolor{gris25}0.0213&\cellcolor{gris25}0.0182&\cellcolor{gris25}0.0181&\cellcolor{gris25}0.0181\\
 \hline
  \end{tabular}
\end{center}
\caption {\label{table2}Relative errors  obtained using an implicit scheme, IMEX scheme at time  $T_{end}=1$.}

\end{table}

Concerning the accuracy and stability, Table~\ref{table2} shows that the numerical solution computed with both implicit and IMEX schemes is stable for any time step $\Delta t$ and the numerical errors are of the same order. Moreover, we get similar results when time step is smaller than $10^{-4}$. Of course, when we increase the number of points $n_s$, the numerical error decreases and the IMEX scheme (\ref{sch:04bis})-(\ref{grenouille:1}) seems to be more accurate for small time steps. Finally, in Figure~\ref{figexplicit}{\color{red}a}, we observe that the large errors appear around the boundary, where large gradients of temperature occur. The Figure~\ref{figexplicit}{\color{red}b} illustrates the temperature evolution at different time $t=0.25,\,0.50,\,0.75$ and $1$. We note that the temperature has a fast decay at the beginning, then it stabilizes to a steady state when $t$ approaches the final time $T_{end}=1$. Furthermore we observe that the temperature develops steep gradients  at the boundary modeling the cooling of the plasma due to the limiter effects. Indeed, on the one hand the thermal diffusion depends on the term $T^{5/2}$ which is large at the beginning  and then becomes smaller and smaller. On the other hand, due to the nonlinear flux at the boundary when the temperature becomes small, the temperature gradient becomes larger and larger.

\begin{center}
\begin{figure}[htbp]

\begin{tabular}{cc}
  \includegraphics[width=7cm]{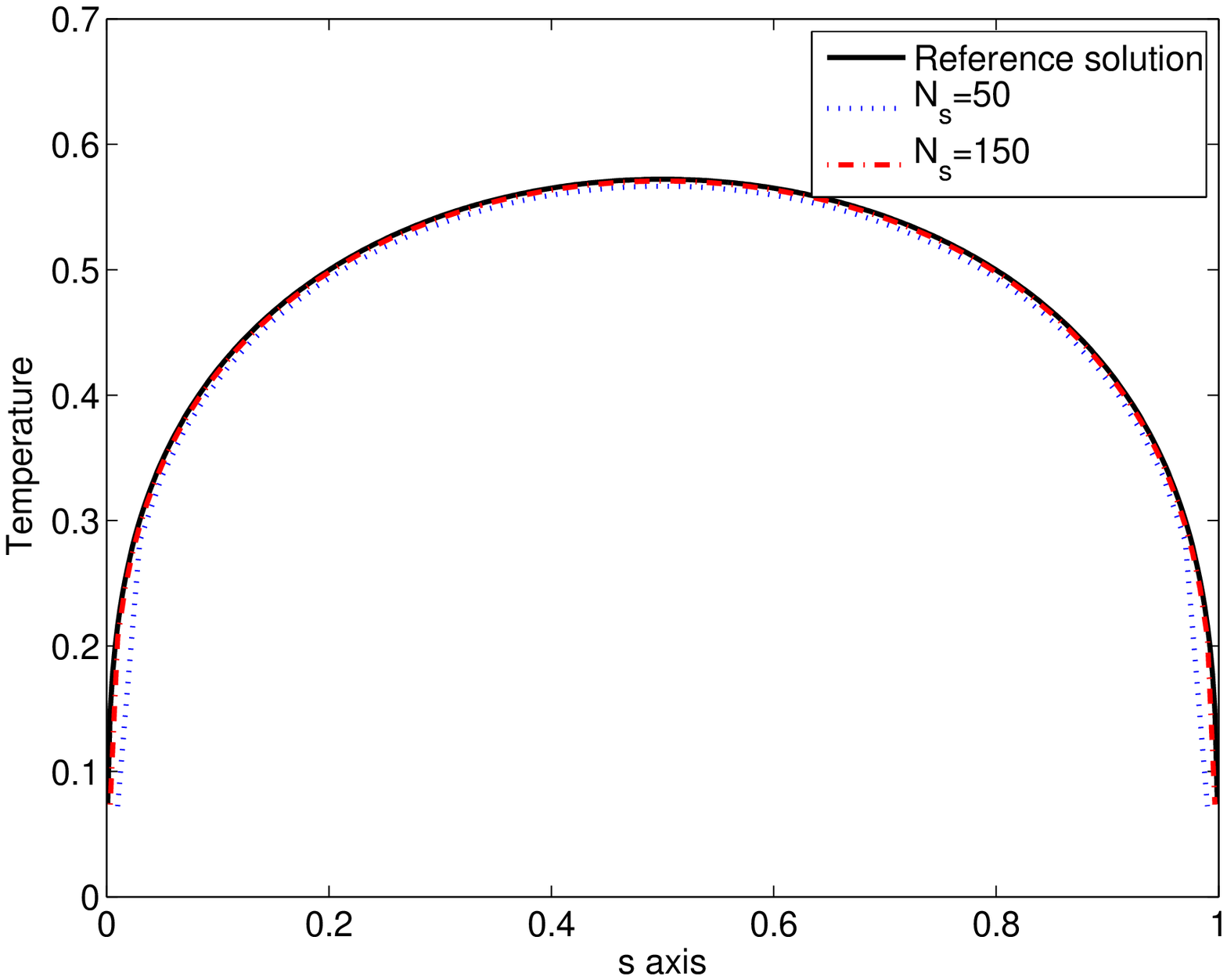} &   
   \includegraphics[width=7cm]{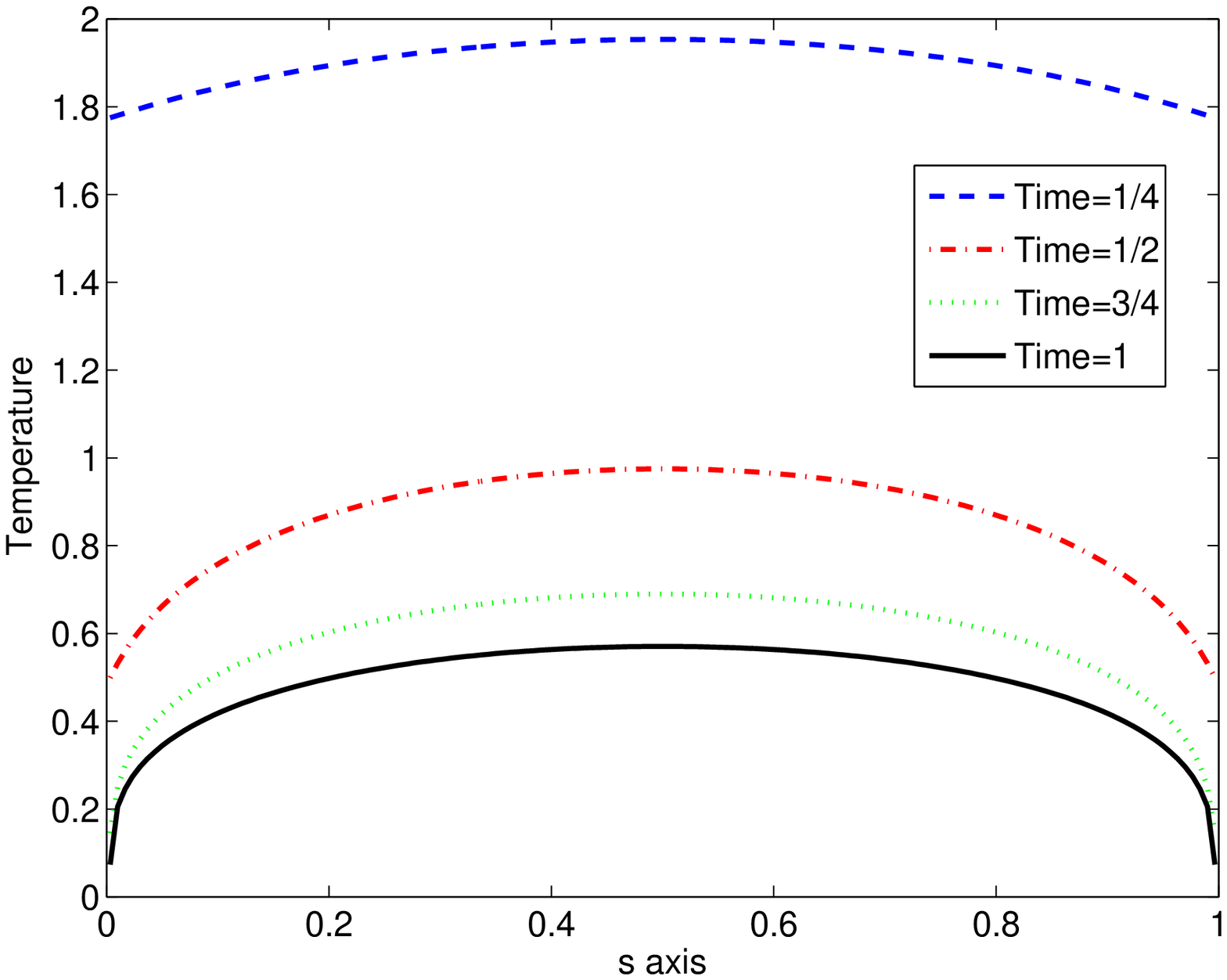}
\\
(a) & (b) 
\end{tabular}
  \caption {\label{figexplicit}Temperature evolution of problem~\eqref{eq1D}. We use the IMEX scheme to approximate~\eqref{eq1D} and choose time step of $\Delta t=10^{-4}$. (a) reference solution and the results of IMEX scheme for $n_s=50$, $150$ at time $T_{end}=1$, (b) results of IMEX scheme for $n_s=150$ at time $t=0.25,\,0.50,\,0.75$ and $1$.}
\end{figure}
\end{center}

\section{\bf The 2D problem}
\label{sec:3}
\setcounter{equation}{0}

In this section, we consider  the two dimensional problem where the temperature $T$ depends on time $t$ and two space variables $(s,r)\in \Omega=(0,1)\times (0,1)$ with appropriate boundary conditions 
\begin{equation}
\partial_tT\,-\,\partial_s(K_{\parallel}\,T^{5/2}\,\partial_sT)\,-\,\partial_r(K_{\bot}\,\partial_rT)\,=\,0,\quad t\geq 0,\,(s,r)\in\Omega,
\label{eq2D}
\end{equation}
where $K_\parallel$ and $K_\perp$ are nonnegative constants with $K_{\perp}\ll K_\parallel$. For the boundary conditions we impose a boundary flux in $r=0$ and assume that for $r=1$ the flux of temperature is zero, that is,
\begin{equation}
  \left\{
  \begin{array}{l}
    \partial_rT(t,s,0)\,=\,-Q_{\bot},\quad s\in(0,1),\, r=0,\,t\geq 0,
\\
\,
\\
    \partial_rT(t,s,1)\,=\,0, \quad s\in(0,1),\, r=1,\,t\geq 0,
\end{array}\right.
\label{eq2Dbcr}
\end{equation}
and at the boundary $s=0$ and $s=1$ we consider either periodic boundary conditions or of modelling describing the effects of the limiter which allows to decrease the temperature in the device. At $s=0$, we have
\begin{equation}
  \left\{
  \begin{array}{l}
K_{\parallel}T^{5/2}(t,0,r)\,\partial_sT(t,0,r)\,=\,\gamma \,T(t,0,r),\quad r\in(1/2,1),\,t\geq 0,
\\
\,
\\
 T(t,0,r) \,=\, T(t,1,r), \quad r\in(0,1/2),\,t\geq 0,
  \end{array}
\right.
\label{eq2Dbcs0}
\end{equation}
and $s=1$,
\begin{equation}
  \left\{
  \begin{array}{l}
K_{\parallel}\,T^{5/2}(t,1,r)\,\partial_sT(t,1,r)\,=\,-\gamma\, T(t,1,r),\quad r\in(1/2,1),\,t\geq 0,
\\
\,
\\
 T(t,0,r) \,=\, T(t,1,r), \quad r\in(0,1/2),\,t\geq 0.
  \end{array}
\right.
\label{eq2Dbcs1}
\end{equation}

This model also satisfies an energy estimate given by 
\begin{eqnarray*}
\frac{1}{2} \frac{d}{dt}\int_{\Omega} |T(t,s,r)|^2dsdr &=& -\frac{16\,K_\parallel}{81}\int_{\Omega} |\partial_s T^{9/4}|^2 dsdr\,-\, K_\perp\int_{\Omega} |\partial_r T|^2 dsdr 
\\
&-& \gamma \int_{1/2}^1 \left(T(0,r)+T(1,r)\right) dr \,+\,  K_\perp \,Q_\perp\,\int_{0}^1 T(s,0)ds.
\end{eqnarray*}
To discretize the system (\ref{eq2D})-(\ref{eq2Dbcs1}), we apply a finite volume method in space $\Omega$ coupled with a time splitting scheme for the time discretization. We first present the numerical scheme and describe precisely the discretization of the boundary conditions. Finally we  compare our numerical results with those obtained by standard explicit and implicit time discretizations.

\subsection{Time splitting scheme}\label{sec:timesplitting_2D}
We apply a time splitting scheme in both directions. As for the one dimensional case, we apply an IMEX scheme to treat the nonlinear equation and find a condition on the viscosity $\nu>0$ to get a uniformly stable scheme. We first consider the non linear problem in the $s$ direction,
\begin{equation}
\label{sch:101}
      \frac{T^{\star}-T^n}{\Delta t}\,-\,\partial_s\left(\left(K_\parallel (T^n)^{5/2}\;-\,\nu\right)\partial_sT^n\right)\,-\,\nu\partial_{ss}^2T^{\star}\,=\, 0,\quad (s,r)\in\Omega, 
\end{equation}
with the boundary condition (\ref{eq2Dbcs0}), 
\begin{equation}
\label{sch:102}
\left\{
\begin{array}{l}
\left(K_\parallel \left(T^n(0,r)\right)^{5/2}\,-\,\nu\right)\;\partial_sT^{n}(0,r) \,=\,\gamma T^{\star}(0,r)\,-\,\nu\,\partial_sT^{\star}(0,r), \quad r\in (1/2,1),
\\
\,
\\
T^\star(0,r) = T^\star(1,r), \quad r\in (0,1/2), 
\end{array}\right.
\end{equation}
and then the condition (\ref{eq2Dbcs1}),
\begin{equation}
\label{sch:103}
\left\{
\begin{array}{l}
\left(K_\parallel \left(T^n(1,r)\right)^{5/2}\,-\,\nu\right)\;\partial_sT^{n}(1,r)\,=\,-\,\gamma T^{\star}(1,r)\,-\,\nu\,\partial_sT^{\star}(1,r),\quad r\in (1/2,1),
\\
\,
\\
T^\star(1,r) = T^\star(0,r), \quad r\in (0,1/2), 
\end{array}\right.
\end{equation}
which allows to compute a first approximation $T^\star$. Then we compute a numerical approximation of the linear heat equation,
\begin{equation}
\label{sch:11}
\frac{T^{n+1}-T^\star}{\Delta t}\,-\,\partial_r(K_\perp\partial_rT^{n+1})\,=\,0,\quad (s,r)\in\Omega, 
\end{equation}
with  non homogeneous Neumann boundary conditions
\begin{equation}
\label{sch:12}
 \left\{
  \begin{array}{l}
    \partial_rT^{n+1}(s,0)\,=\,-Q_{\bot},\quad s\in(0,1),\, r=0,
\\
\,
\\
    \partial_rT^{n+1}(s,1)\,=\,0, \quad s\in(0,1),\, r=1.
\end{array}\right.
\end{equation}

For the sake of clarity we present a stability estimate on this semi-discrete scheme (discrete in time and continuous in space), but the proof can be easily adapted to the fully discrete case.
\begin{proposition}\label{prop:2D}
Assume that the viscosity term $\nu$ is such that  for any $r\in(0,1)$,
\begin{equation*}
K_\parallel\left\|T^n\right\|_\infty^{5/2}\leq\,\nu,\quad \forall n\in\NN.
\end{equation*}
Then the numerical solution satisfies the following 
\begin{eqnarray*}
\frac{1}{2}\int_{\Omega}\left(T^{n+1}\right)^2\,dr\,ds + \frac{\nu\Delta t}{2}\,\int_{\Omega}|\partial_sT^{n+1}|^2 \,dr\,ds
\,\leq\,\frac{1}{2}\int_{\Omega}\left(T^0\right)^2\,dr\,ds + \frac{\nu\Delta t}{2} \int_{\Omega}|\partial_sT^0|^2dr\,ds 
\\
 -\,K_\perp\,\Delta t\,\sum_{k=1}^{n+1}\left[\int_{\Omega}|\partial_rT^{k}|^2\,dr\,ds \,-\, Q_\perp\,\int_{0}^1 T^{k}(s,0) ds\right].
\end{eqnarray*}
\label{prop:04}
\end{proposition}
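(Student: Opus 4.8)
The plan is to imitate, at the semi-discrete level, the energy estimate already obtained in Proposition~\ref{prop:03}, and then to add the contribution of the linear heat step (\ref{sch:11})--(\ref{sch:12}). First I would treat the nonlinear $s$-step: multiply (\ref{sch:101}) by $T^\star$ and integrate over $\Omega$. The term $\int_\Omega (T^\star - T^n) T^\star/\Delta t$ is bounded below by $\frac{1}{2\Delta t}\int_\Omega (|T^\star|^2 - |T^n|^2)$ using $(a-b)a \ge \tfrac12(a^2-b^2)$. Integrating the diffusion terms by parts in $s$ (for fixed $r$) produces, via the boundary conditions (\ref{sch:102})--(\ref{sch:103}) on the limiter part $r\in(1/2,1)$ and periodicity on $r\in(0,1/2)$, exactly the structure of the 1D proof: the boundary contributes $-\gamma\,\Delta t\int_{1/2}^1\big(|T^\star(1,r)|^2+|T^\star(0,r)|^2\big)\,dr \le 0$, and the interior gives $\Delta t\int_\Omega\big(\nu - K_\parallel|T^n|^{5/2}\big)\partial_s T^n\,\partial_s T^\star\,ds\,dr - \nu\Delta t\int_\Omega |\partial_s T^\star|^2\,ds\,dr$. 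Applying Young's inequality with parameter $\varepsilon=\nu$ and the hypothesis $K_\parallel\|T^n\|_\infty^{5/2}\le\nu$ exactly as in Proposition~\ref{prop:03}, the cross term is absorbed and one obtains
\begin{equation*}
\frac{1}{2}\int_\Omega |T^\star|^2\,ds\,dr + \frac{\nu\Delta t}{2}\int_\Omega |\partial_s T^\star|^2\,ds\,dr \;\le\; \frac{1}{2}\int_\Omega |T^n|^2\,ds\,dr + \frac{\nu\Delta t}{2}\int_\Omega |\partial_s T^n|^2\,ds\,dr.
\end{equation*}

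Next I would handle the linear $r$-step. Multiply (\ref{sch:11}) by $T^{n+1}$ and integrate over $\Omega$; the time difference again gives $\frac{1}{2\Delta t}\int_\Omega(|T^{n+1}|^2 - |T^\star|^2)$ as a lower bound, while integration by parts in $r$ together with the Neumann data (\ref{sch:12}) yields $-K_\perp\Delta t\int_\Omega|\partial_r T^{n+1}|^2\,ds\,dr + K_\perp\Delta t\,Q_\perp\int_0^1 T^{n+1}(s,0)\,ds$ (the flux at $r=1$ vanishes, the flux at $r=0$ equals $-Q_\perp$). Since the $r$-step does not change the $s$-derivative energy in a way that hurts us — one observes $\int_\Omega|\partial_s T^{n+1}|^2 \le \int_\Omega|\partial_s T^\star|^2$ because (\ref{sch:11}) is a contraction in $H^1_s$, or more simply one just keeps track of the $\frac{\nu\Delta t}{2}\int_\Omega|\partial_s T^{n+1}|^2$ term by noting it is dominated by the corresponding $T^\star$ quantity — I would combine the two one-step estimates to get
\begin{equation*}
\frac{1}{2}\int_\Omega |T^{n+1}|^2 + \frac{\nu\Delta t}{2}\int_\Omega |\partial_s T^{n+1}|^2 \;\le\; \frac{1}{2}\int_\Omega |T^n|^2 + \frac{\nu\Delta t}{2}\int_\Omega |\partial_s T^n|^2 - K_\perp\Delta t\Big[\int_\Omega|\partial_r T^{n+1}|^2 - Q_\perp\int_0^1 T^{n+1}(s,0)\Big].
\end{equation*}
Summing this telescoping inequality over $k=1,\dots,n+1$ (using that $T^0$ seeds both the $L^2$ and the $H^1_s$ terms, and that the single nonlinear-to-linear transition at each step preserves the bound) gives precisely the claimed estimate.

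The main obstacle I anticipate is the bookkeeping of the $\nu\Delta t\,\|\partial_s T\|_{L^2}^2$ term across the splitting: after the $r$-step the relevant quantity is $\partial_s T^{n+1}$, but the nonlinear step only controlled $\partial_s T^\star$, so one must argue that solving the linear $r$-equation (\ref{sch:11})--(\ref{sch:12}) does not increase $\int_\Omega|\partial_s T|^2\,ds\,dr$. This follows because $\partial_s$ commutes with the $r$-direction operator and the Neumann data in (\ref{sch:12}) are $s$-independent, so $\partial_s T^{n+1}$ solves a homogeneous backward-Euler heat step in $r$ and hence has non-increasing $L^2$ norm; I would spell this out in one line. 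A secondary technical point — harmless but worth a remark — is that the mixed periodic/limiter boundary condition in $s$ must be split consistently as "$r\in(0,1/2)$ periodic, $r\in(1/2,1)$ Robin", which is exactly how the boundary terms were organized above, so no new difficulty arises there. Everything else is a routine transcription of the 1D arguments of Propositions~\ref{prop:02} and~\ref{prop:03}.
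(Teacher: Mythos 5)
Your proposal is correct and follows essentially the same route as the paper: the energy estimate for the nonlinear $s$-step via Young's inequality with $\varepsilon=\nu$, the $L^2$ estimate for the linear $r$-step with the Neumann boundary contributions, and — the key point you correctly isolate — differentiating the $r$-step in $s$ and using the $s$-independence of the Neumann data to show $\int_\Omega|\partial_s T^{n+1}|^2\le\int_\Omega|\partial_s T^\star|^2$, which is exactly the paper's inequality (\ref{rama:03}). Summing the resulting telescoping inequality completes the proof as in the paper.
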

\begin{proof}
 Multiplying (\ref{sch:101}) by $T^\star$ and integrating in $\Omega$, we obtain
\begin{eqnarray*}
\frac{1}{2}\int_{\Omega}\left(\left(T^\star\right)^2-\left(T^n\right)^2\right)\,dr\,ds &\leq&-\Delta t\,\int_\Omega\left(K_\parallel\left(T^n\right)^{5/2} - \nu\right)\partial_sT^n\,\partial_sT^\star\,dr\,ds 
\\
&&-  \Delta t\;\int_\Omega \,\nu|\partial_sT^\star|^2\,dr\,ds
\\
&&-  \gamma\,\Delta t\;\int_{1/2}^1 |T^\star(0,r)|^2\,+\,|T^\star(1,r)|^2\,dr.
\end{eqnarray*}
Then, applying the Young inequality and taking $\nu$ such that for all $r\in (0,1)$,
$$
0\,\leq\,K_\parallel\left|T^n(s,r)\right|^{5/2}\leq\,\nu,\quad \forall n\in\NN,
$$
we have
\begin{equation}
\label{rama:01}
\frac{1}{2}\int_{\Omega}\left(T^\star\right)^2\,dr\,ds \,+\; \frac{\nu\Delta t}{2}\,\int_{\Omega}|\partial_sT^\star|^2dr\,ds  \,\leq\, \frac{1}{2}\int_{\Omega}\left(T^n\right)^2\,dr\,ds \,+\, \frac{\nu\Delta t}{2}\,\int_{\Omega}|\partial_sT^n|^2dr\,ds.
\end{equation}
Similarly, we multiply (\ref{sch:11}) by $ T^{n+1}$ and integrate with respect to $(s,r)\in \Omega$, we get
\begin{eqnarray}
\frac{1}{2}\int_{\Omega}\left(\left(T^{n+1}\right)^2-\left(T^\star\right)^2\right)drds&\leq&-\Delta t \,K_\perp\,\int_\Omega\left(\partial_r T^{n+1}\right)^2drds
\nonumber
\\
&& +\,\Delta t \,Q_\perp \,K_\perp\,\int^1_0T^{n+1}(s,0)ds.
\label{rama:02}
\end{eqnarray}
Furthermore, we derive (\ref{sch:11}) with respect to $s$ and get
\begin{equation*}
\frac{\partial_sT^{n+1}-\partial_sT^{\star}}{\Delta t}-K_\perp(\partial_{rr}^2\partial_sT^{n+1})=0.
\end{equation*}
Then we multiply this latter equality by $\nu\partial_sT^{n+1}$ and integrate over $(s,r) \in\Omega$, 
\begin{eqnarray*}
\int_{\Omega}\nu\,\left[\left(\partial_sT^{n+1}\right)^2-\left(\partial_sT^\star\right)^2\right]dr\,ds &\leq& -2\,\Delta t\,\nu\,K_\perp\int_{\Omega}|\partial_{rs}T^{n+1}|^2dr\,ds
\\
&+& \nu\,\Delta t\,\left[\partial_s(\partial_rT^{n+1}) \partial_sT^{n+1}\right]_{r=0}^{r=1}.
\end{eqnarray*}
Hence using that $\partial_s\left(\partial_rT^{n+1}(s,r)\right)=0$, $r\in\{0, 1\}$, it yields
\begin{equation}
\label{rama:03}
\int_{\Omega}\nu\left[\left(\partial_sT^{n+1}\right)^2-\left(\partial_sT^\star\right)^2\right]dr\,ds\,\leq\,0.
\end{equation}
Then, gathering (\ref{rama:02}) and (\ref{rama:03}), we get
\begin{eqnarray*}
\frac{1}{2}\int_{\Omega}\left(T^{n+1}\right)^2\,dr\,ds \,+\, \frac{\nu\Delta t}{2}\,\int_{\Omega}\left[|\partial_sT^{n+1}|^2 \,+\,K_\perp\,|\partial_rT^{n+1}|^2 \right]\,dr\,ds \,-\, \Delta t\,K_\perp\,Q_\perp\,\int_{0}^1 T^{n+1}(s,0) ds
\\
\leq\, \frac{1}{2}\int_{\Omega}\left(T^\star\right)^2\,dr\,ds + \frac{\nu\Delta t}{2}\int_{\Omega}|\partial_sT^\star|^2dr\,ds. 
\end{eqnarray*}
Finally, the latter inequality together with (\ref{rama:01}), it gives 
\begin{eqnarray*}
\frac{1}{2}\int_{\Omega}\left(T^{n+1}\right)^2\,dr\,ds \,+\,\frac{\nu\Delta t}{2}\,\int_{\Omega}\left[|\partial_sT^{n+1}|^2 +K_\perp\,|\partial_rT^{n+1}|^2 \right]\,dr\,ds \,-\, \Delta t\,K_\perp\,Q_\perp\,\int_{0}^1 T^{n+1}(s,0) ds
\\
\leq\, \frac{1}{2}\int_{\Omega}\left(T^n\right)^2\,dr\,ds + \frac{\nu\Delta t}{2}\int_{\Omega}|\partial_sT^n|^2dr\,ds. 
\end{eqnarray*}
By induction and summing over $k=0,\ldots,n$, we get the result
\begin{eqnarray*}
\frac{1}{2}\int_{\Omega}\left(T^{n+1}\right)^2\,dr\,ds + \frac{\nu\Delta t}{2}\,\int_{\Omega}|\partial_sT^{n+1}|^2 \,dr\,ds
\,\leq\,\frac{1}{2}\int_{\Omega}\left(T^0\right)^2\,dr\,ds +  \frac{\nu\Delta t}{2}\int_{\Omega}|\partial_sT^0|^2dr\,ds 
\\
 -\,K_\perp\,\Delta t\,\sum_{k=1}^{n+1}\left[\int_{\Omega}|\partial_rT^{k}|^2\,dr\,ds \,-\, Q_\perp\,\int_{0}^1 T^{k}(s,0) ds\right].
\end{eqnarray*}
\end{proof}

\subsection{A finite volume approximation}\label{sec:discretspace2D}

For the space discretization, we consider a set of points $(s_{i-1/2})_{0\leq i\leq n_s}$ a set of points of the interval $(0,1)$ with $s_{-1/2}=0$, $s_{n_s-1/2}=1$ and $n_s+1$ represents the number of discrete points in the direction $s$  and $(r_{j-1/2})_{0\leq j\leq n_r}$ a set of points of the interval $(0,1)$ with $r_{-1/2}=0$, $r_{n_r-1/2}=1$ and $n_r+1$ represents the number of discrete points in the direction $r$. For $0\leq i \leq n_s-1$, $0\leq j \leq n_r-1$,  we define the control cell $C_{i,j}$ by  $C_{i,j}=(s_{i-1/2},s_{i+1/2})\times(r_{j-1/2},r_{j+1/2})$. We  also denote by $(r_i,s_i)$ the center of $C_{i,j}$  and by $\Delta s_i$ the space step $\Delta s_i = s_{i+1/2}-s_{i-1/2}$  and $\Delta r_j$ the space step $\Delta r_j = r_{j+1/2}-r_{j-1/2}$ where we assume that there exists $\xi\in(0,1)$ such that
\begin{equation}
\label{cond:mesh2}
\xi\,h \,\leq \,\Delta s_i,\, \,\Delta r_j\leq\,h, \quad \forall (i,j)\in\{0,\ldots,n_s-1\}\times\{0,\ldots,n_r-1\},
\end{equation}
with $h=\max_{i,j}\{\Delta s_i,\,\Delta r_j\}$.

We shall construct a set of approximations $T_{i,j}(t)$ of the average of the solution to (\ref{temp_simpl})-(\ref{BC}) on the control volume $C_{i,j}$ and set 
$$
T_{i,j}^0 = \frac{1}{|C_{i,j}|}\,\int_{C_{i,j}} T_0(s,r)\,ds\,dr.
$$  
Hence, the finite volume discretization  to (\ref{sch:101}) can be written as 
\begin{equation*}
\frac{T_{i,j}^\star - T_{i,j}^n}{\Delta t} \,=\,\frac{\F_{i+1/2,j}^{n+1/2}-\F_{i-1/2,j}^{n+1/2}}{\Delta s_i}, \quad \forall (i,j)\in\{0,\ldots,n_s-1\}\times\{0,\ldots,n_r-1\},
\end{equation*}
where the flux $\F_{i+1/2,j}$ corresponds to the one dimensional flux given by (\ref{grenouille:0}) and  periodic boundary conditions are applied for $r_j\in(0,1/2)$ and conditions (\ref{grenouille:1}) for  $r_j\in(1/2,1)$. 

Then, the  finite volume discretization  to (\ref{sch:101}) can be written as 
\begin{equation*}
\frac{T_{i,j}^{n+1} - T_{i,j}^\star}{\Delta t} \,=\,\frac{\G_{i,j+1/2}^{n+1}-\G_{i,j-1/2}^{n+1}}{\Delta r_j},\quad \forall (i,j)\in\{0,\ldots,n_s-1\}\times\{0,\ldots,n_r-1\}
\end{equation*}
where $\G_{i,j+1/2}$ is given by
\begin{equation}
\label{sch:flux2}
\G_{i,j+1/2} \,=\,2\,K_\perp\,\frac{T_{i,j+1}^{n+1}\,-\,T_{i,j}^{n+1}}{\Delta r_{j+1}+\Delta r_j} \,, \quad j=0,\dots,n_r-2.
\end{equation}
Moreover, at the boundary $r=0$ and $r=1$, we apply the boundary conditions, 
\begin{equation}
\label{sch:bc2}
\G_{i,j+1/2}=
\left\{
\begin{array}{ll}
\displaystyle -K_\perp\,Q_\perp, & \textrm{if } j=-1,
\\
\,
\\
\displaystyle 0, &  \textrm{if } j=n_r-1.
\end{array}\right.
\end{equation}

\subsection{Numerical results}
In this section we compare the different numerical results related to the 2D problem (\ref{eq2D})-(\ref{eq2Dbcs1}) obtained using a time splitting scheme with an explicit, implicit and IMEX treatment of each step. As before, we first compute a reference solutions obtained from an explicit scheme with a small time step satisfying a CFL condition $\Delta t\sim h^2$.  In the following numerical simulations, we choose  the different physical parameters as $K_\parallel=1$, $K_\perp=10^{-2}$, $\gamma=2$, $Q_\perp=10$. Moreover, the initial temperature is given by
\begin{equation}
 T^0(s,r)\,=3,
\end{equation}
and the final time of the simulation is $T_{end}=2$.

To compute the reference solution, we have chosen $n_s=300$ and $n_{r}=300$, whereas the numerical results using  implicit and IMEX schemes  are obtained with $n_s=100$ and $n_{r}=100$ with several time steps $\Delta t=10^{-1}$, $10^{-2}$, $10^{-3}$, and $10^{-4}$. First, concerning the computational time we observe in Table \ref{tablecomputationtime2D},  that the IMEX scheme is much faster than the implicit scheme. Furthermore, the numerical error presented in  Table \ref{tableerror2D} for both scheme is of the same order of magnitude and thus the IMEX scheme is clearly much more efficient than the fully implicit scheme.
\begin{table}[H]%
  \begin{center}
  \begin{tabular}{|l|l|l|l|l|}
    \hline
    $\Delta t$ & $10^{-1}$ & $10^{-2}$ & $10^{-3}$ & $10^{-4}$\\
     \hline\hline
    Implicit scheme&4.02&25.64&172.95&1327.50\\
    \hline
    IMEX scheme&1.62&4.42&36.24&403.63\\
 \hline
  \end{tabular}
\end{center}
 
\caption {\label{tablecomputationtime2D}Computational time for the 2D problem (\ref{eq2D})-(\ref{eq2Dbcs1}) using implicit  and IMEX schemes at time $T_{end}=2$.}
\end{table}

\begin{table}[H]%
  \begin{center}
  \begin{tabular}{|l|l|l|l|l|}
    \hline
    $\Delta t$& $10^{-1}$ & $10^{-2}$ & $10^{-3}$ & $10^{-4}$\\
     \hline\hline
    Implicit scheme&0.2245&0.0236&0.0020&2.1985e-04\\
    \hline
    IMEX scheme&0.2093&0.0213&0.0018&2.4385e-04\\
 \hline
  \end{tabular}
\end{center}
\caption {\label{tableerror2D}The relative errors for the implicit and IMEX schemes compared with a reference solution for the 2D problem (\ref{eq2D})-(\ref{eq2Dbcs1}) at time  $T_{end}=2$.}
\end{table}

Now we want to investigate the effect of the splitting scheme on the numerical error and the computational cost. Therefore, we also propose a comparison between the different schemes. We first compare the computational time applying  the IMEX scheme with and without the splitting method with a time step $\Delta t=10^{-3}$, $(n_s,n_r)=(50, 50)$, $(100,100)$, $(300,300)$ and $(500,500)$ respectively. On the one hand, we observe in Table \ref{tablesplittingtime} that the splitting method is much faster than the non-splitting method when the number of discrete points increases.
\begin{table}[H]%
  \begin{center}
  \begin{tabular}{|l|l|l|l|l|}
    \hline
    $n_s\times n_r$&$50\times 50$&$100\times100$&$300\times300$&$500\times500$\\
     \hline\hline
    IMEX Non-splitting scheme & 11 & 60 & 505 & 2112\\
    \hline
    IMEX splitting scheme  & 16  & 36 &219 &601 \\
 \hline
  \end{tabular}
\end{center}
\caption {\label{tablesplittingtime}Computational time of IMEX with and without splitting scheme  at time  $T_{end}=2$.}
\end{table}

On the other hand, we compare the numerical  errors corresponding to the two strategies with $(n_s,n_r)=(100,100)$, $\Delta t=10^{-3}$ in Table \ref{tablesplittingerror}, in particular the fully implicit scheme with and without splitting and the IMEX scheme with and without splitting.  We observe that the method without splitting is always more accurate than the one with the splitting method.
\begin{table}[H]%
  \begin{center}
  \begin{tabular}{|l|l|l|l|l|}
    \hline
    Scheme  &Splitting implicit &Splitting IMEX & Implicit & IMEX \\
     \hline\hline
     Numerical error & $2.\times 10^{-3}$ & $2.\times 10^{-3}$ & $5.\times 10^{-4}$ & $5.\times 10^{-4}$ \\
 \hline
  \end{tabular}
\end{center}
\caption {\label{tablesplittingerror}Relative  errors for different numerical schemes compared with a reference solution for  $(n_s,n_r)=(100,100)$, $\Delta t=10^{-3}$ at time  $T_{end}=2$.}
\end{table}
In Figure \ref{figeq2D}, we present  the evolution of the approximation of the temperature (\ref{eq2D})-(\ref{eq2Dbcs1}) in computational domain $\Omega$, which is divided into two regions : the transition layer and the scrape-off layer (SOL) as illustrated in Figure~\ref{ima1}. We first initialize the temperature to a constant and then observe immediately that temperature decreases rapidly in the scrape-off layer  and becomes singular around the limiter (which corresponds to the boundary $s=0$ and $1$ with $r\geq 1/2$). On the other hand, in the transition layer, the temperature converges to a  steady state which is homogeneous in $s\in(0,1)$. The different numerical schemes give the same qualitative behavior of the solution.

\begin{center}
\begin{figure}[htbp]
\begin{tabular}{cc}
\includegraphics[width=7.5cm]{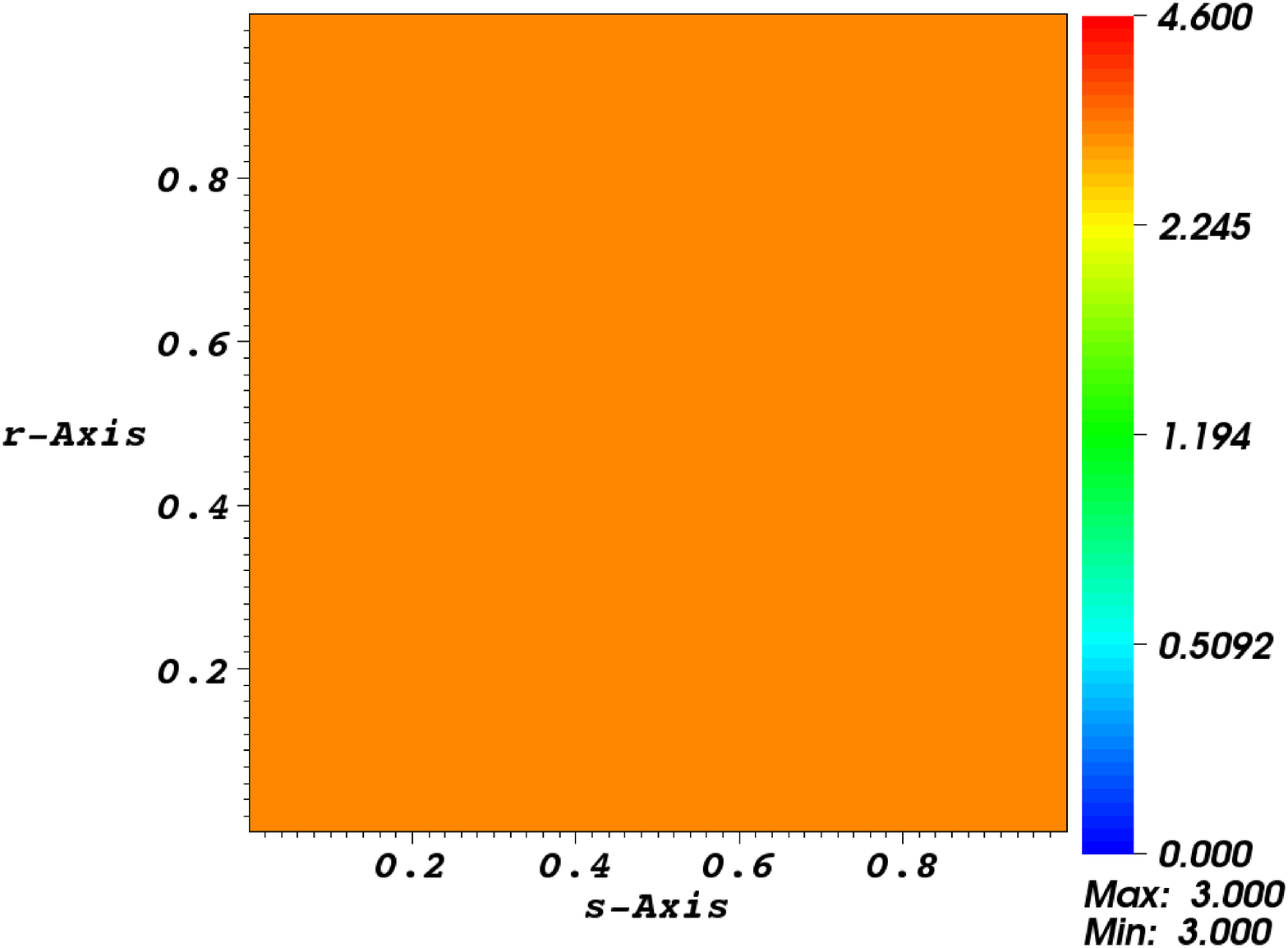}
&
\includegraphics[width=7.5cm]{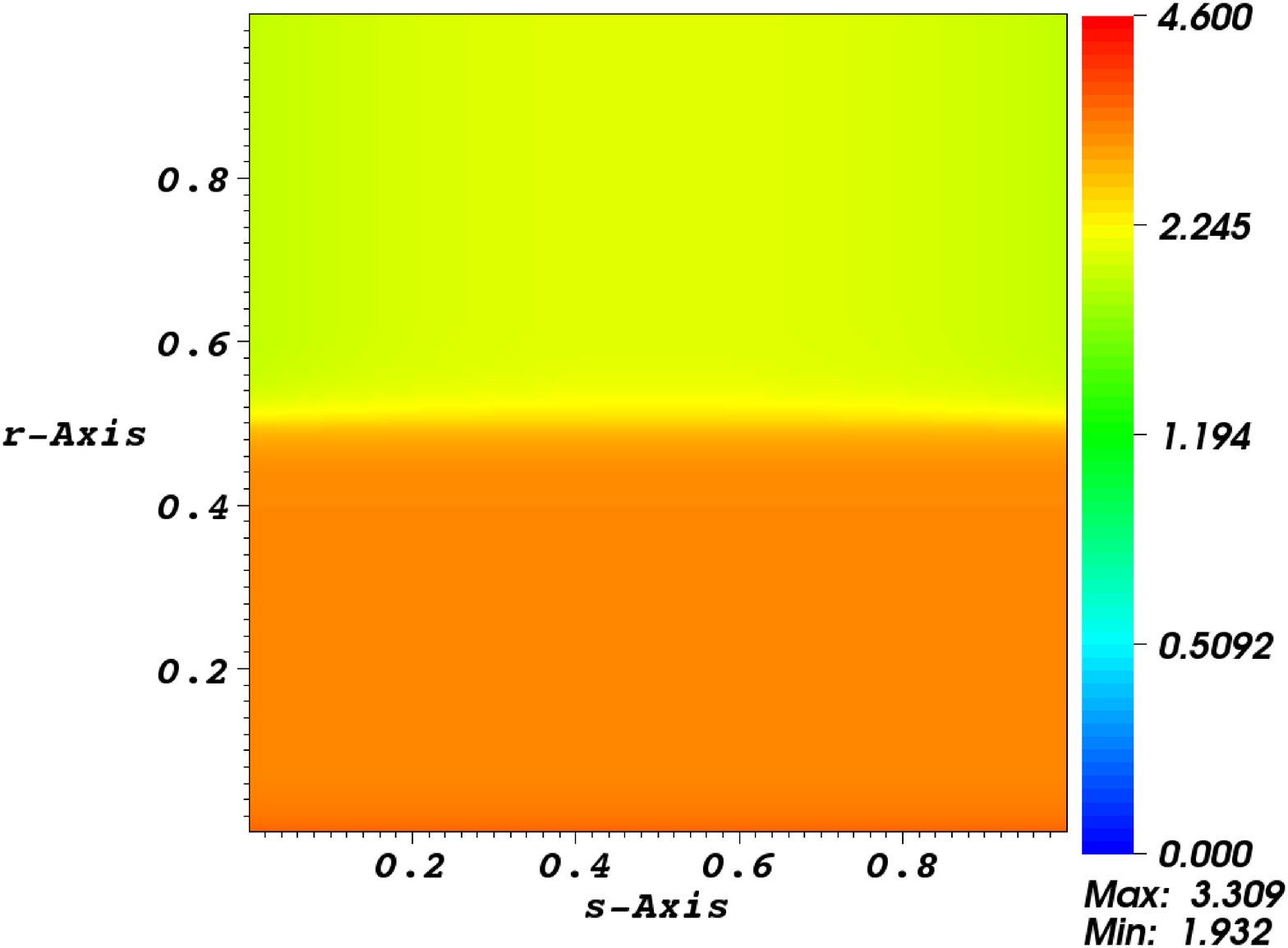}
\\
 (a) $t=0$ & (b) $t=0.1$
\\
 \includegraphics[width=7.5cm]{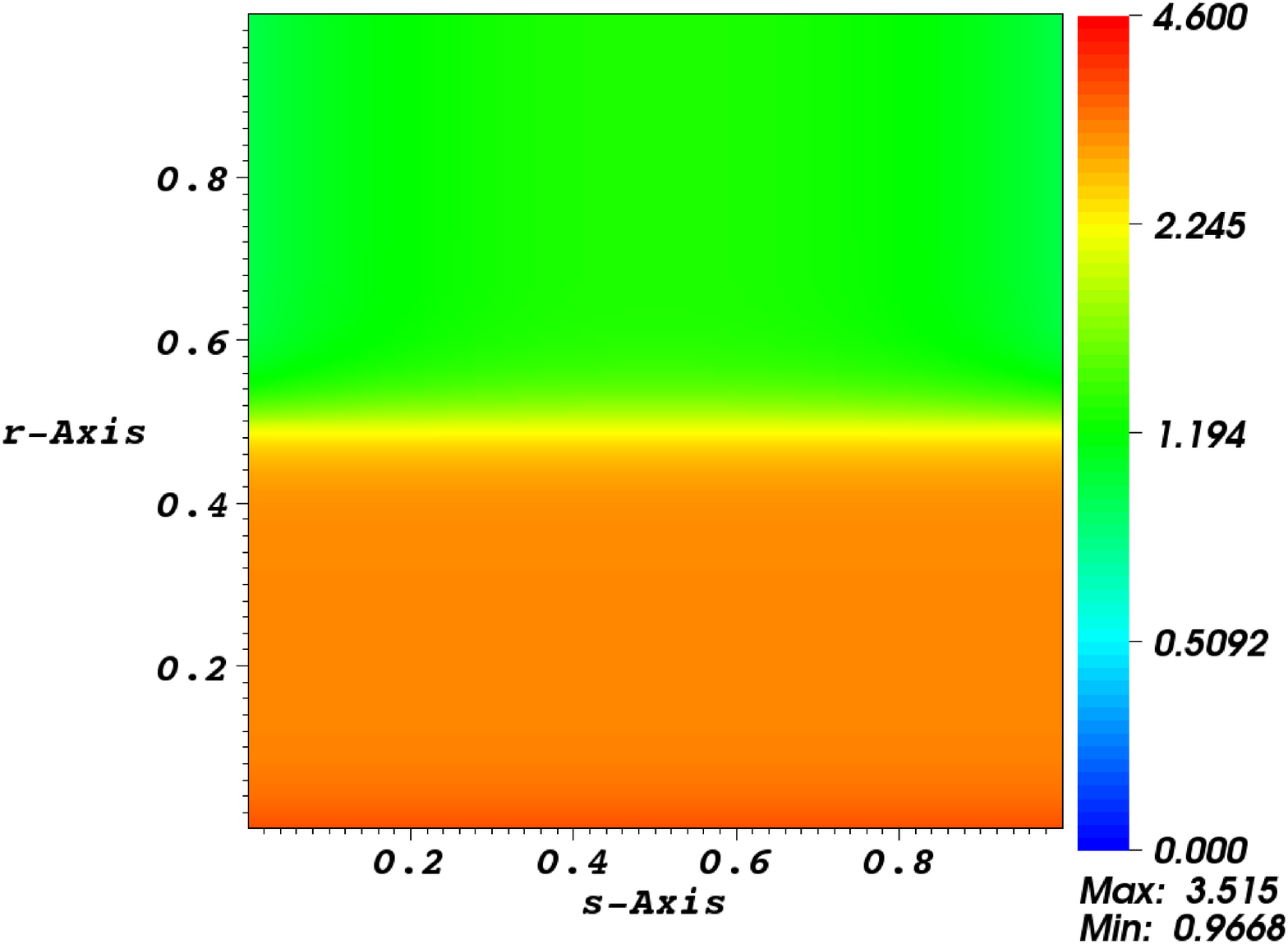}
&
\includegraphics[width=7.5cm]{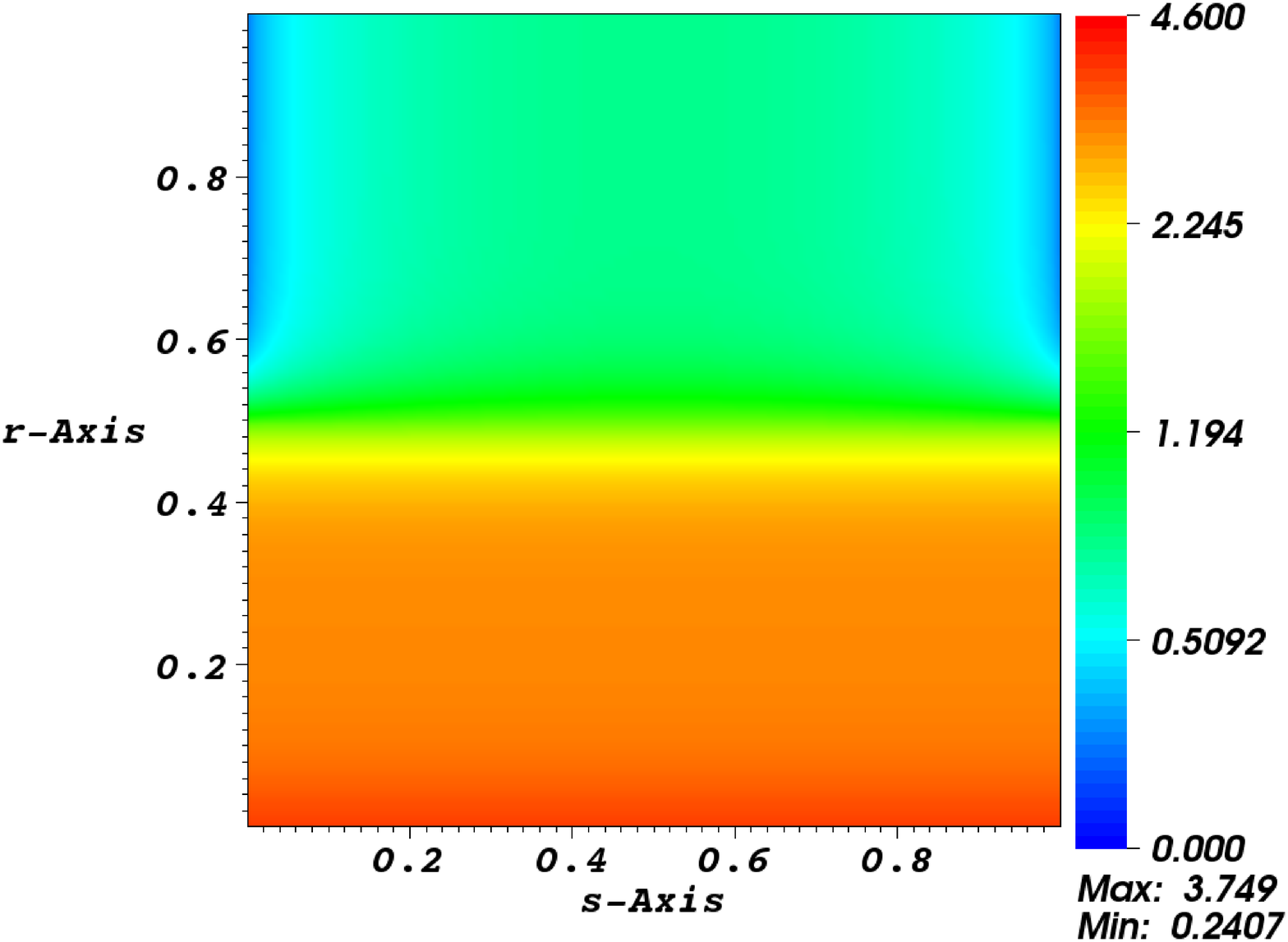}
\\
(c) $t=0.25$
&
(d) $t=0.5$
\\
\includegraphics[width=7.5cm]{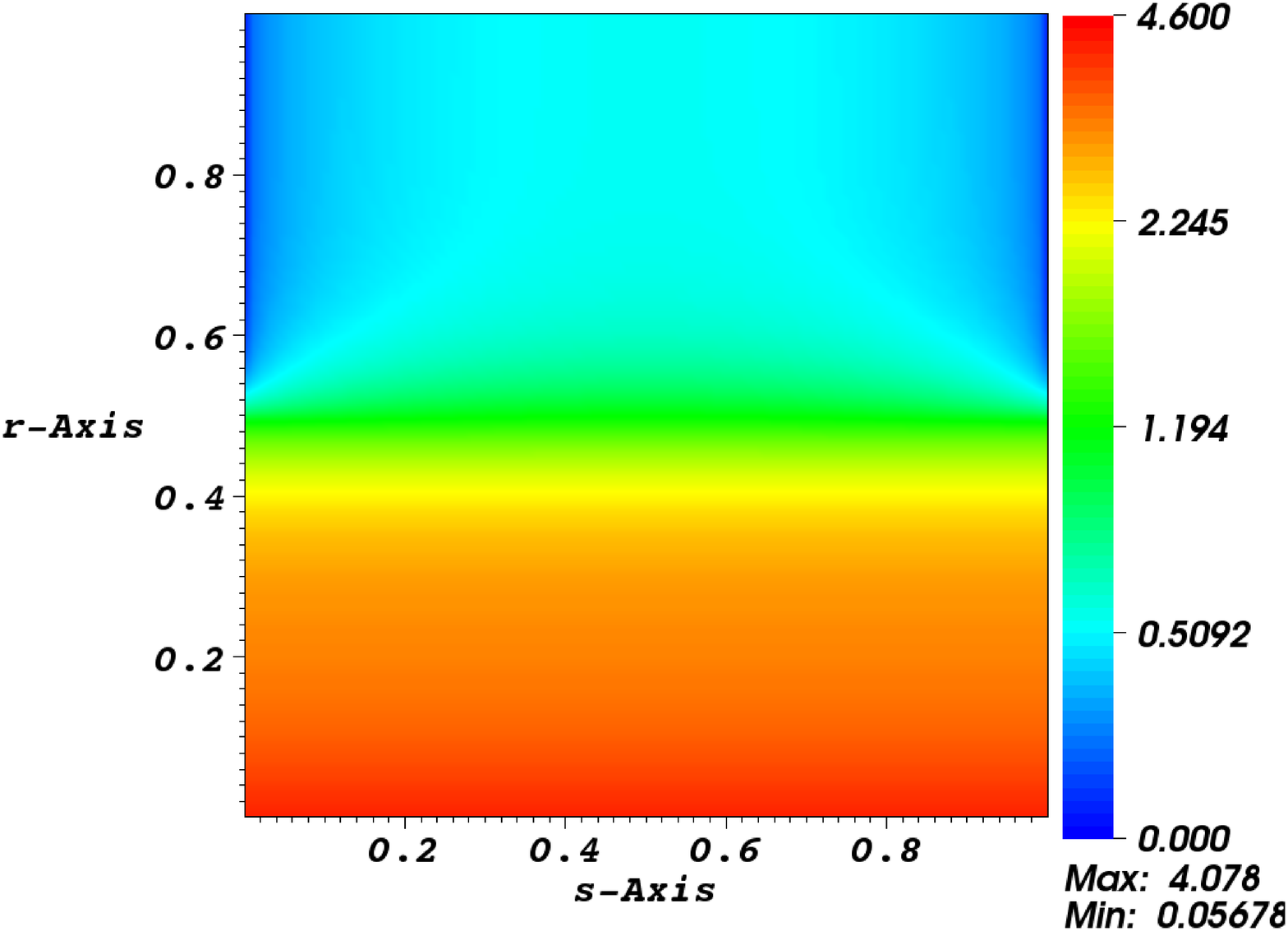} &
\includegraphics[width=7.5cm]{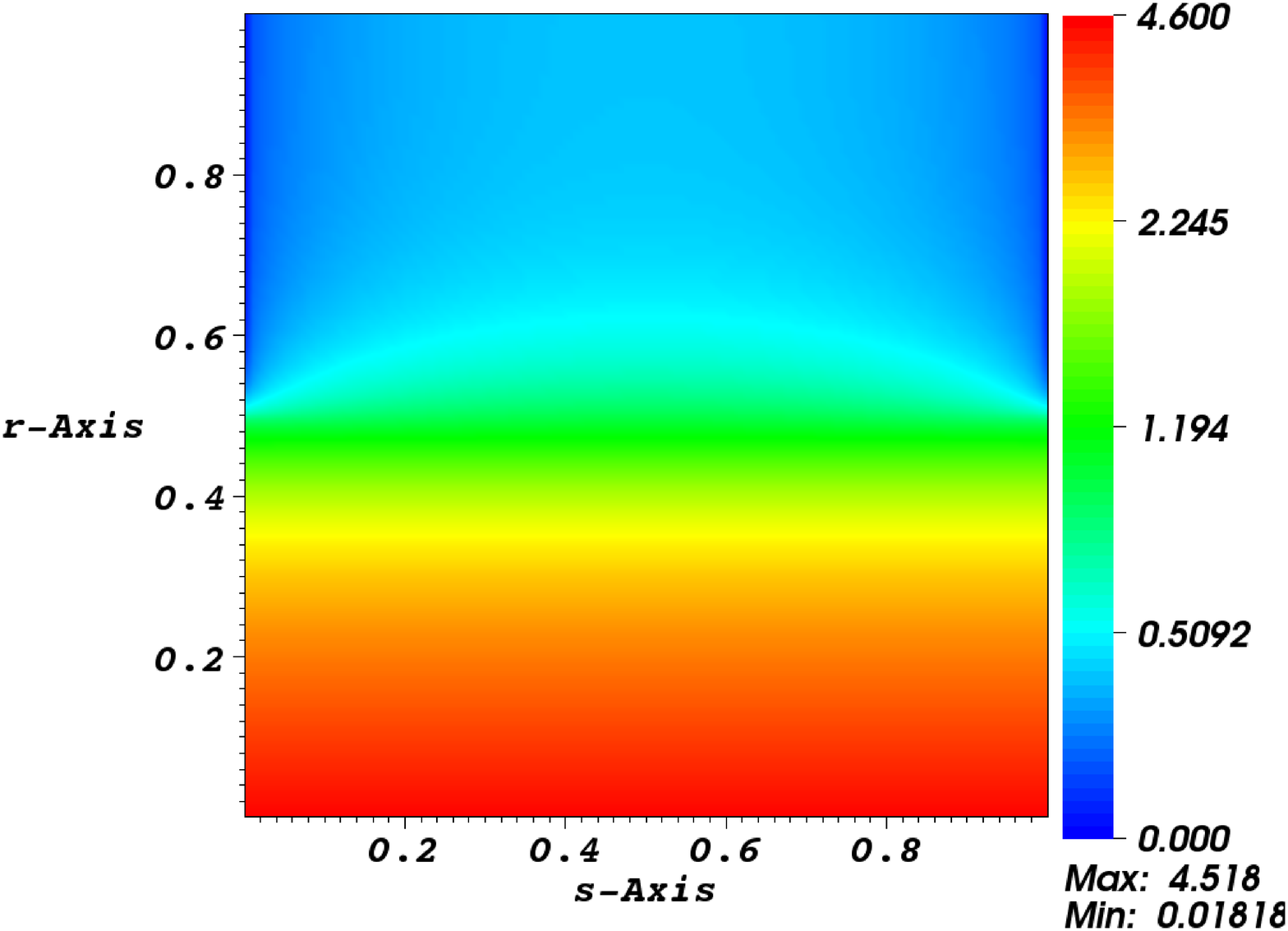}
\\
(e) $t=1$ & (f) $t=2$
\end{tabular}
\caption{Temperature evolution  of problem (\ref{eq2D}).}
  \label{figeq2D}
\end{figure}
\end{center}

In Figure~\ref{fig2Dcoupes}, we plot  the temperature evolution at  the section $r=0.25$, $r=0.75$ and $s=10^{-2}$ and $s=0.5$ respectively. According to Ko\v can {\it et al.}~\cite{Kocan2008,Kocan2009}, the parallel thermal diffusivity is much larger than the  perpendicular one, {\it i.e.} $K_\parallel\gg K_\bot$. Therefore, the temperature becomes constant along the magnetic field lines, that is for $s\in(0,1)$. We observe in Figures~\ref{fig2Dcoupes} that the temperature is constant at all time whereas steep gradients develop at the boundary layer $s=0$ and $s=1)$ in the SOL region. In the perpendicular direction $r$, the situation is different. We also observe that at time $t=2$ the temperature decreases linearly with respect to $r$  in the transition layer ($0\leq r\leq 0.5$), according to the heat flux $Q_\bot$ at edge $r=0$, and then decreases exponentially in the scrape-off layer ($0.5\leq r\leq 1$).  These numerical results correspond to the retarding field  analyzer (RFA)~\cite{Kocan200879,Kocan2008,Kocan2009}.

 \begin{center}
\begin{figure}[htbp]
 \begin{tabular}{cc}
\includegraphics[width=7.5cm]{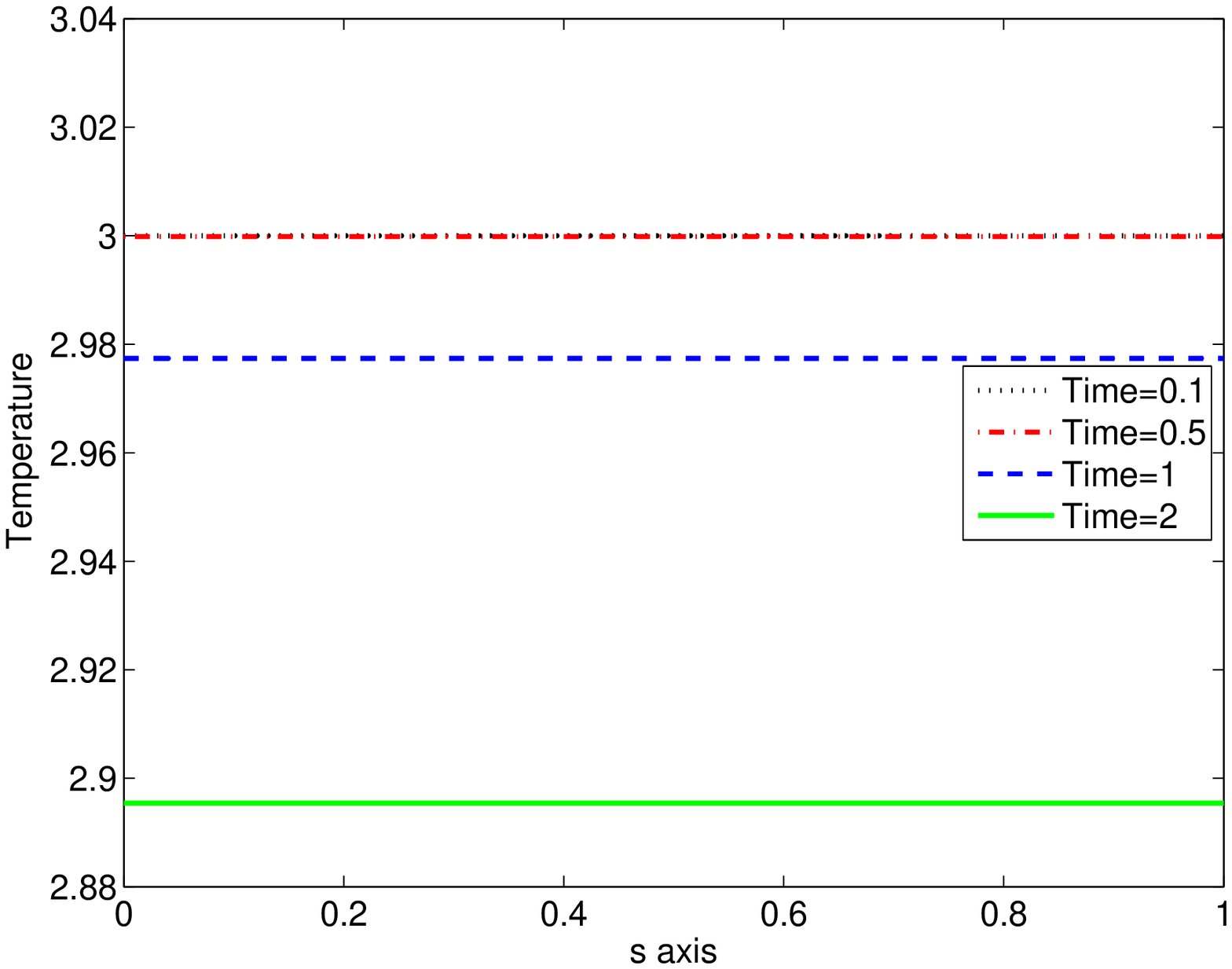} & 
\includegraphics[width=7.5cm]{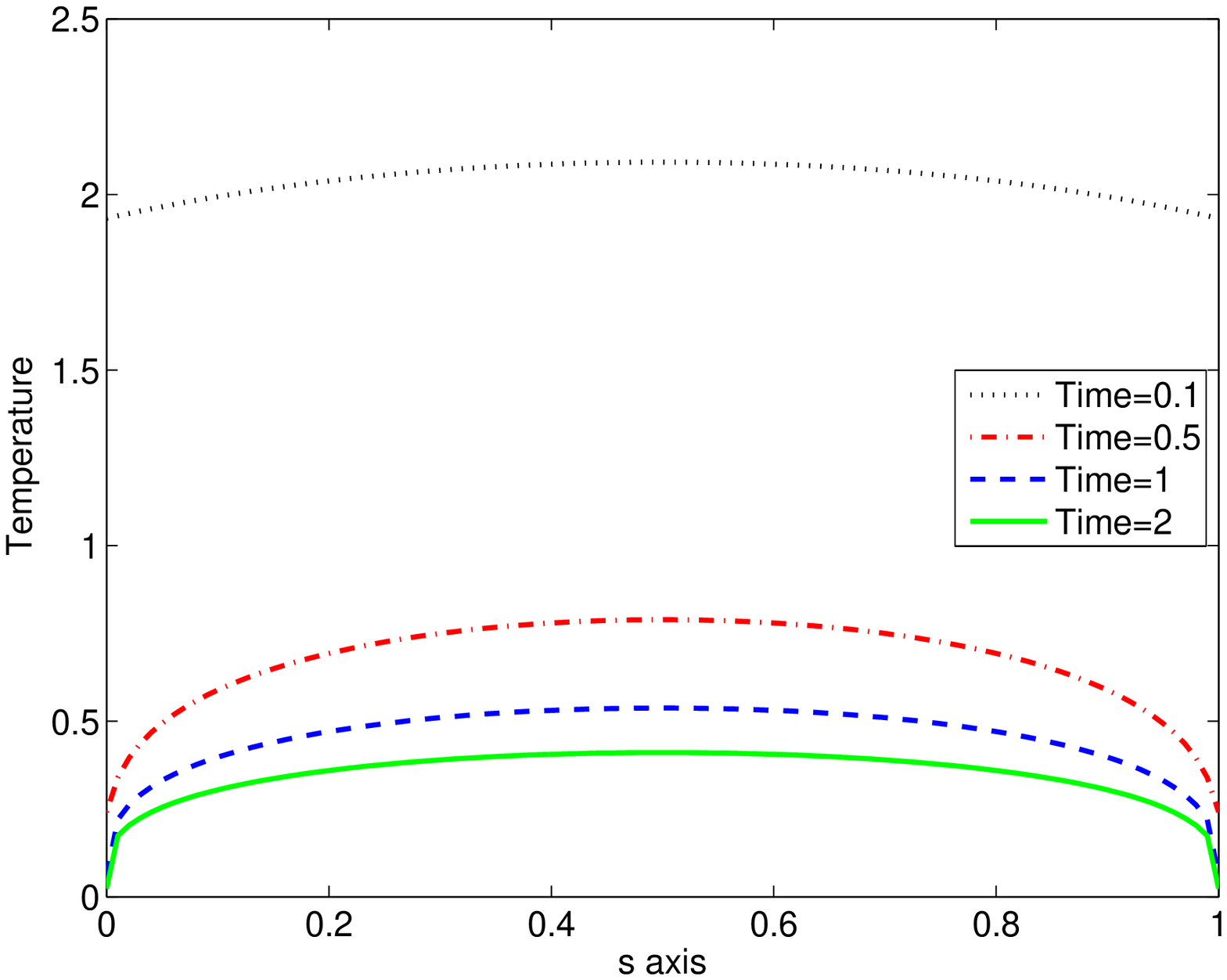}
\\
(a) $r=1/4$ &
(b) $r=3/4$ 
\\
      \includegraphics[width=7.5cm]{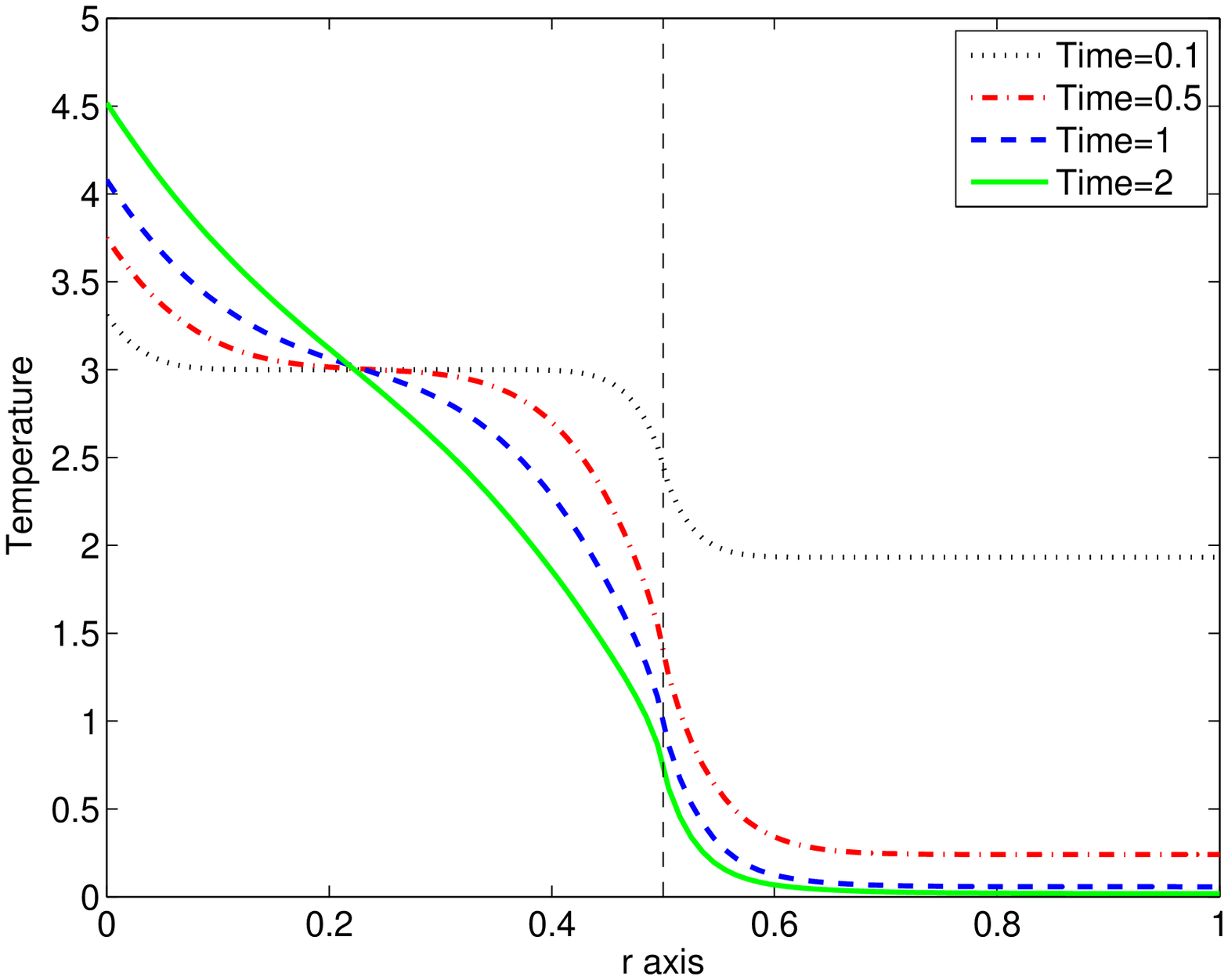} &
      \includegraphics[width=7.5cm]{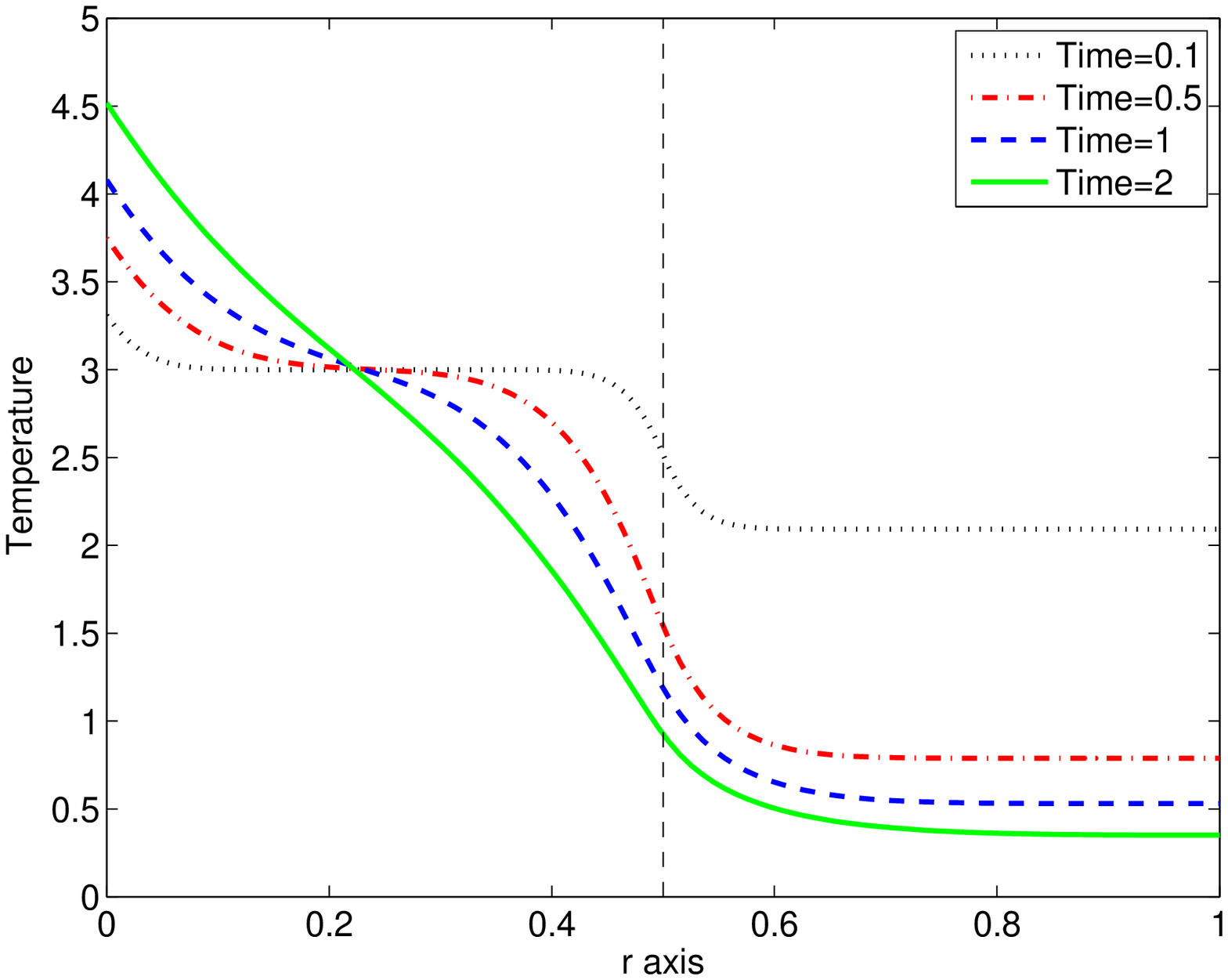}
\\
(c) $s=10^{-2}$ & 
(d) $s=1/2$
 \end{tabular}   
\caption{Temperature evolution at section  $r=1/4$, $r=3/4$, $s=10^{-2}$ and $s=1/2$  at time $t=0.1$, $0.5$, $1$ and $2$ respectively.}
  \label{fig2Dcoupes}
 \end{figure}
 \end{center}

Finally, we present the evolution of the energy dissipation with respect to time:
\begin{eqnarray*}
\frac{1}{2} \frac{d}{dt}\int_{\Omega} |T(t,s,r)|^2dsdr &=& \E_1 \,+\,\E_2 \,+\,\E_3,
\end{eqnarray*}
with 
$$
\left\{\begin{array}{l}
\displaystyle \E_{1}\,:=\,- \int_\Omega \left( K_\parallel\left(T\right)^{5/2}\left|\partial_sT\right|^2 \,+\, K_\perp\left|\partial_r T\right|^2\,\right)\,dr\,ds,
\\
\,
\\
\displaystyle \E_{2}\,:=\, -\gamma\,\int^1_{1/2}T(t,1,r)^2  \,+\, T(t,0,r)^2 \,dr,
\\
\,
\\
\displaystyle \E_3 \,:=\, +Q_\bot\, K_\perp\,\int^1_0T(s,0)ds.
\end{array}\right.
$$ 
The Figure \ref{figeq2Denergy} states the terms $\E_{1}$, $\E_{2}$, $\E_{3}$ as function of $t$. We plot these terms obtained from  implicit and IMEX schemes. Note that these two figures are almost the same. In fact, at the beginning of simulation, there is a fast decay of the temperature, thus  the quantity $-\E_{1}$ representing the total energy exchange ratio in the domain $\Omega$,  is increasing for $t<0.1$. Then, it converges to an equilibrium state for larger time. On the other hand, the quantity $-\E_{2}$ decreases with respect to time, it is due to the anisotropy between $K_\parallel$ and $K_\bot$. Indeed, the energy is transferred to the limiters in the  scrape-off layer region whereas in the perpendicular direction $r$, the thermal diffusivity is  small. Finally, as we have seen in Figure~\ref{fig2Dcoupes} on  the edge of  of the core, the temperature does not vary significantly, thus the quantity $\E_{3}$ increases slightly with respect to time.

\begin{center}
\begin{figure}[htbp]
  \begin{tabular}{cc}
\includegraphics[width=7.5cm]{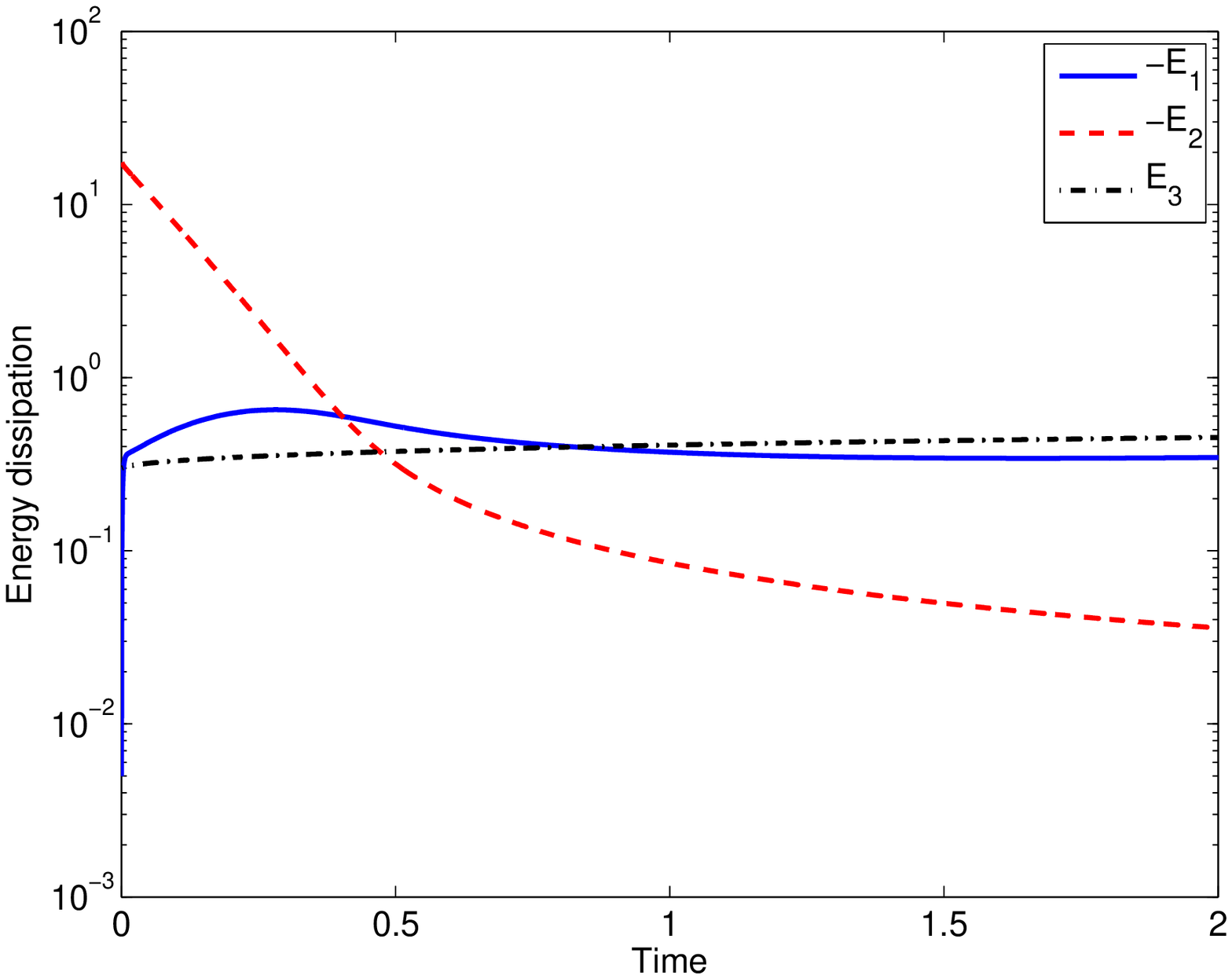} &
\includegraphics[width=7.5cm]{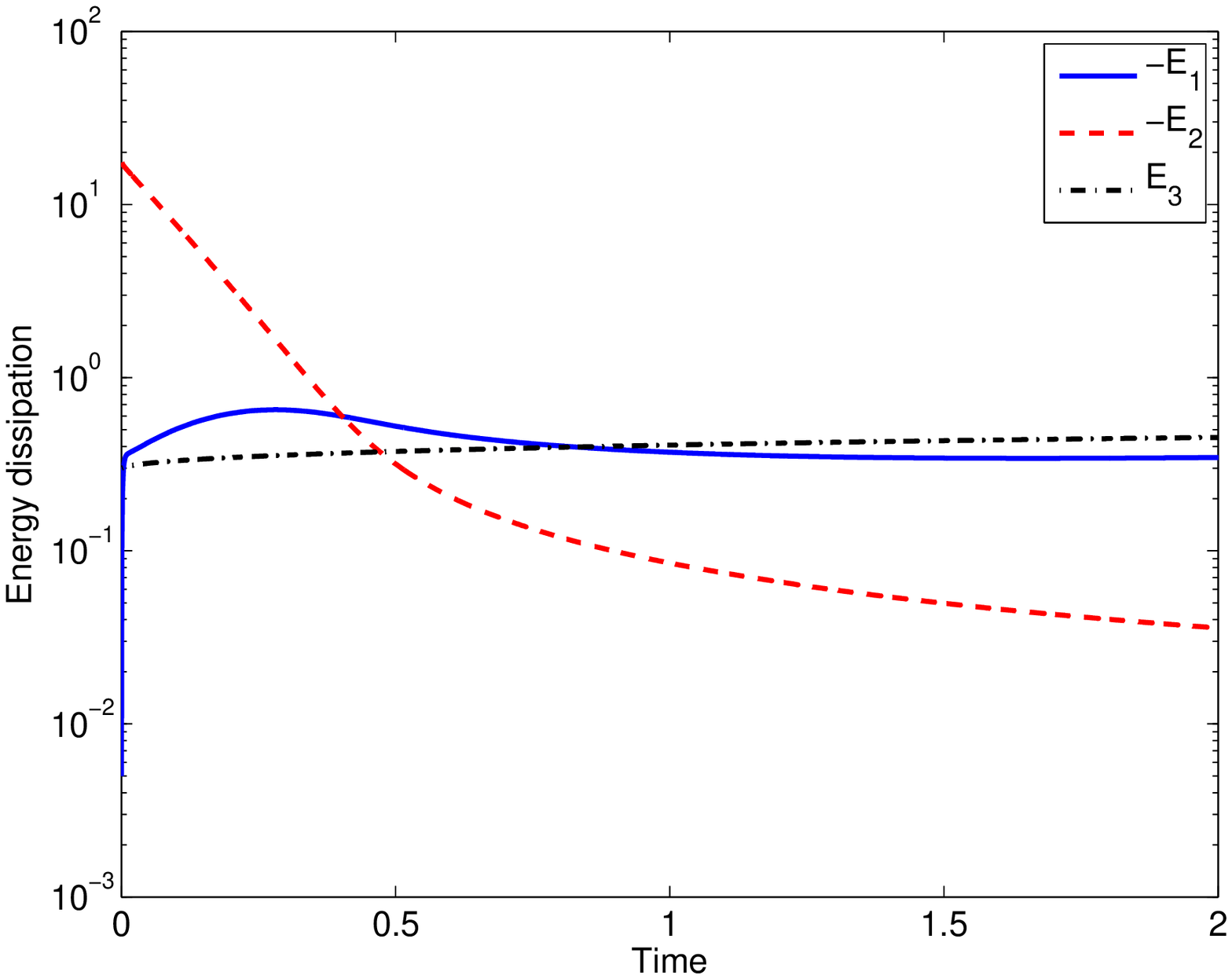}
\\
(a) Implicit scheme & (b) IMEX scheme 
\end{tabular}
\caption{Evolution of the energy dissipation with respect to time for problem (\ref{eq2D}), with $\Delta t=0.001$.}
 \label{figeq2Denergy}
 \end{figure}
 \end{center}


\section{\bf The coupling problem}
\label{sec:4}
\setcounter{equation}{0}

In this section, we consider the full 2D model (\ref{temp_simpl}) composed of two different particle species, {\it i.e.} ions and electrons. We denote by $T_i$ ({\it resp.} $T_e$) the temperature of ions ({\it resp.} electrons) which depends  on time $t$ and two space variables $(s,r)\in\Omega$. The two equations are coupled by a non-zero source term which balances the temperature between the two particle species,
\begin{equation}
\left\{
\begin{array}{l}
 \displaystyle{ \partial_tT_i-\partial_s(K_{\parallel,i}T^{5/2}_i\partial_sT_i)-\partial_r(K_{\bot,i}\partial_rT_i)=+\beta(T_i-T_e),} \textrm{ for }(s,r)\in\Omega,
\\
\,
\\
\displaystyle{\partial_tT_e-\partial_s(K_{\parallel,e}T^{5/2}_e\partial_sT_e)-\partial_r(K_{\bot,e}\partial_rT_e)=-\beta(T_i-T_e),} \textrm{ for }(s,r)\in\Omega,
\end{array}\right.
\label{eqcoulage}
\end{equation}
where $K_{\bot,i}\ll K_{\parallel,i}$, $K_{\bot,e}\ll K_{\parallel,e}$ and $\beta$ is a negative constant. These two equations are completed with the same type of boundary conditions as in \eqref{eq2Dbcr}-\eqref{eq2Dbcs1}. 


\subsection{Time splitting scheme}
Now we discretize the full system (\ref{eqcoulage}) using a splitting scheme in three steps. We assume that an approximation of the solution $(T_e,T_i)$ at time $t^n$ is known and denote it by $(T_e^n,T_i^n)$. Therefore, we first approximate the source part coupling the two temperatures $T_ e$ and $T_i$ using an  implicit scheme, which yields 
\begin{equation}
  \left\{
  \begin{array}{l}
\displaystyle  T_i^{\star}\,=\,\frac{1}{2}\left(1\,-\,\frac{1}{1-2\beta\Delta t}\right)\,T_e^{n}\,+\,\frac{1}{2}\,\left(1\,+\,\frac{1}{1-2\beta\Delta t}\right)\,T_i^{n},
\\
\,
\\
\displaystyle T_e^{\star}=\frac{1}{2}(1+\frac{1}{1-2\beta\Delta t})T_e^{n}+\frac{1}{2}(1-\frac{1}{1-2\beta\Delta t})T_i^{n}.
  \end{array}
\right.
\label{eqschemeequilibrium}
\end{equation}
It is clear that \eqref{eqschemeequilibrium} guarantees the positivity of the temperature. Then we apply the same time splitting steps as before in direction $s$ and in direction $r$ as follows. On the one hand we compute $T_\alpha^{\star\star}$ for $\alpha\in\{i,\,e\}$ by solving (\ref{sch:101})-(\ref{sch:103}). On the other hand we apply the last step (\ref{sch:11})-(\ref{sch:12}) in the direction $r$.

Furthermore, for the scheme (\ref{sch:101})-(\ref{sch:103}), (\ref{sch:11})-(\ref{sch:12}) and (\ref{eqschemeequilibrium}), we also  prove an energy estimate 
\begin{proposition}
Consider that the initial datum $T_0$ is nonnegative and $T_0\in L^\infty(0,1)$. Assume that for $\alpha\in\{i,\,e\}$, the viscosity term $\nu$ is such that  for any $r\in(0,1)$,
\begin{equation*}
\max_{\alpha\in\{i,e\}}K_{\parallel,\alpha}\|T^n_\alpha\|^{5/2}_\infty\,\leq\,\nu,\quad \forall n\in\NN.
\end{equation*}
Then the numerical solution, given by (\ref{eqschemeequilibrium}), satisfies the following 
\begin{eqnarray*}
&&\frac{1}{2}\sum_{\alpha\in\{i,e\}} \int_{\Omega}\left[\,|T^{n+1}_\alpha|^2 \;+\, \Delta t\,\nu\,|\partial_sT^{n+1}_\alpha|^2\,\right] dr\,ds
\\
&\leq& \frac{1}{2}\sum_{\alpha\in\{i,e\}}\int_{\Omega}\left[\,|T^{0}_\alpha|^2 \,+\,\Delta t\,\nu\,|\partial_sT^{0}_\alpha|^2\,\right]dr\,ds
\\
&-& \Delta t\,\sum_{\alpha\in\{i,e\}} \sum_{k=1}^{n+1}K_{\perp,\alpha}\,\int_{\Omega}|\partial_rT^{k}_\alpha|^2\,dr\,ds
\\
&+& \Delta t\,\sum_{\alpha\in\{i,e\}} \sum_{k=1}^{n+1}K_{\perp,\alpha}\, Q_{\perp,\alpha}\,\int_{0}^1 T^{k}_\alpha(s,0) ds.
\end{eqnarray*}
\end{proposition}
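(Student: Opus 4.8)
The plan is to exploit the splitting structure: I would show that each of the three sub-steps --- the source relaxation \eqref{eqschemeequilibrium}, the parallel IMEX step \eqref{sch:101}--\eqref{sch:103}, and the perpendicular implicit step \eqref{sch:11}--\eqref{sch:12} --- does not increase the modified energy $\tfrac12\sum_{\alpha}\int_\Omega\big(|T_\alpha|^2+\Delta t\,\nu\,|\partial_s T_\alpha|^2\big)\,dr\,ds$, the last two steps producing in addition the dissipative $r$-contributions, and then close by a telescoping induction on the time index.

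\emph{The source step.} Since $\beta<0$ we have $0<(1-2\beta\Delta t)^{-1}<1$, so \eqref{eqschemeequilibrium} reads, pointwise in $(s,r)$, $T_i^\star=\lambda T_e^n+(1-\lambda)T_i^n$ and $T_e^\star=(1-\lambda)T_e^n+\lambda T_i^n$ with $\lambda=\tfrac12\big(1-(1-2\beta\Delta t)^{-1}\big)\in(0,\tfrac12)$. Hence the source step preserves nonnegativity and does not increase $\max_\alpha\|T_\alpha\|_\infty$, so the hypothesis on $\nu$ still controls $K_{\parallel,\alpha}\|T_\alpha^\star\|_\infty^{5/2}$ and the parallel step can be run for each species. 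Moreover $T_i^\star+T_e^\star=T_i^n+T_e^n$ and $T_i^\star-T_e^\star=(1-2\beta\Delta t)^{-1}(T_i^n-T_e^n)$, and the very same identities hold after differentiating in $s$ because $\lambda$ is constant; the parallelogram identity $a^2+b^2=\tfrac12\big[(a+b)^2+(a-b)^2\big]$ then gives pointwise $|T_i^\star|^2+|T_e^\star|^2\le|T_i^n|^2+|T_e^n|^2$ and $|\partial_sT_i^\star|^2+|\partial_sT_e^\star|^2\le|\partial_sT_i^n|^2+|\partial_sT_e^n|^2$. Integrating over $\Omega$ shows that the modified energy does not increase during the source step.

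\emph{The transport steps.} For each fixed $\alpha\in\{i,e\}$ the steps \eqref{sch:101}--\eqref{sch:103} followed by \eqref{sch:11}--\eqref{sch:12} are exactly the scheme of Proposition~\ref{prop:2D} with data $K_{\parallel,\alpha},K_{\perp,\alpha},Q_{\perp,\alpha}$ and with $T_\alpha^\star$ in the role of the initial datum. Since the required viscosity condition holds by the previous paragraph, the single-step inequality established in the course of the proof of Proposition~\ref{prop:2D} yields, for each $\alpha$,
\begin{align*}
\frac12\int_\Omega\big(|T_\alpha^{n+1}|^2+\Delta t\,\nu\,|\partial_s T_\alpha^{n+1}|^2\big)\,dr\,ds
&\le \frac12\int_\Omega\big(|T_\alpha^\star|^2+\Delta t\,\nu\,|\partial_s T_\alpha^\star|^2\big)\,dr\,ds \\
&\quad -\,\Delta t\,K_{\perp,\alpha}\Big(\int_\Omega|\partial_r T_\alpha^{n+1}|^2\,dr\,ds-Q_{\perp,\alpha}\int_0^1 T_\alpha^{n+1}(s,0)\,ds\Big).
\end{align*}
The only point to re-check here is that $\partial_s(\partial_rT_\alpha^{n+1})=0$ on $r\in\{0,1\}$, used to discard a boundary term in the proof of Proposition~\ref{prop:2D}; this still holds because the Neumann data $-Q_{\perp,\alpha}$ and $0$ are constant in $s$.

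\emph{Conclusion and main obstacle.} Summing the last inequality over $\alpha\in\{i,e\}$, chaining with the source-step estimate to replace $T_\alpha^\star$ by $T_\alpha^n$, and iterating for $k=0,\dots,n$ (telescoping the modified-energy terms and accumulating the $r$-dissipation) gives the asserted estimate. The main obstacle is the source step: one must check that the discrete relaxation operator \eqref{eqschemeequilibrium} is \emph{simultaneously} non-expansive for $\sum_\alpha|T_\alpha|^2$ and for $\sum_\alpha|\partial_sT_\alpha|^2$, and that it keeps the solution nonnegative and bounded so the viscosity condition survives the whole splitting cycle; once that is granted, the two transport steps are covered verbatim by Proposition~\ref{prop:2D} applied species by species.
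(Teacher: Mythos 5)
Your proposal is correct and follows the same architecture as the paper's proof: apply the single-step estimate from Proposition~\ref{prop:2D} species by species to the two transport steps, show that the source relaxation is simultaneously non-expansive for $\sum_\alpha|T_\alpha|^2$ and $\sum_\alpha|\partial_s T_\alpha|^2$, and telescope over the time index. The only difference is in the micro-argument for the source step: the paper rewrites \eqref{eqschemeequilibrium} in the implicit form $T_\alpha^\star - T_\alpha^n = \pm\,\Delta t\,\beta\,(T_i^\star-T_e^\star)$, multiplies by $T_\alpha^\star$, sums over $\alpha$ and uses $\beta<0$ to discard the cross term, whereas you diagonalize into sum/difference variables and invoke the parallelogram identity together with $0<(1-2\beta\Delta t)^{-1}<1$; the two computations are equivalent. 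Your additional remark that the convex-combination form preserves nonnegativity and the $L^\infty$ bound, so that the viscosity condition stated for $T_\alpha^n$ actually covers the input $T_\alpha^\star$ of the parallel step, addresses a point the paper leaves implicit.
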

\begin{proof}
We first observe that the energy estimate of the two last steps in the direction $s$ and $r$ are the same as the one proved in Proposition~\ref{prop:2D}, hence we have 
\begin{eqnarray*}
\frac{1}{2}\int_{\Omega}\left[ \,|T^{n+1}_\alpha|^2 \,+\, \nu\,\Delta t\,\left(\,|\partial_sT^{n+1}_\alpha|^2 \,+\,K_{\perp,\alpha}\,|\partial_rT^{n+1}_\alpha|^2 \right)\,\right]\,dr\,ds 
\\
\leq\, \frac{1}{2}\int_{\Omega}\left[\,|T^{\star\star}_\alpha|^2\, +\, \Delta t\,\nu\,|\partial_sT^{\star\star}_\alpha|^2\,\right]dr\,ds \,+\, \Delta t\,K_{\perp,\alpha}\,Q_{\perp,\alpha}\,\int_{0}^1 T^{n+1}_\alpha(s,0)\,ds. 
\end{eqnarray*}
Therefore, to achieve the proof on the energy estimate,  we only observe that  \eqref{eqschemeequilibrium} can be written as follows 
\begin{equation}
\left\{
\begin{array}{l}
\displaystyle T_i^{\star}\,-\,T^{n}_i \,=\, +\,\Delta t\,\beta\,\left(T^{\star}_i\,-\,T^{\star}_e\right),
\\
\,
\\
\displaystyle  T_e^{\star}\,-\,T^{n}_e\,=\, -\,\Delta t\,\beta\,\left(T^{\star}_i\,-\,T^{\star}_e\right). 
\end{array}\right.
\label{pierre}
\end{equation}
Multiplying the first  equation (\ref{pierre}) by $T_i^\star$ and the second by $T_e^\star$ and  integrating on $(r,s)\in\Omega$, it yields
\begin{equation*}
  \frac{1}{2}\int_{\Omega}|T_i^{\star}|^2\,+\,|T_e^{\star}|^2drds \,\leq\, \frac{1}{2}\int_{\Omega}|T_i^{n}|^2 \,+\,|T_e^{n}|^2drds
\end{equation*}
Moreover, differentiating  \eqref{pierre}  with respect to $s$ and multiplying the first equation by $\nu\partial_sT_i^{\star}$ and the second one by $\nu\partial_sT_e^{\star}$, we get
\begin{equation*}
  \frac{\nu}{2}\int_{\Omega}|\partial_s T_i^{\star}|^2\,+\,|\partial_s T_e^{\star}|^2drds \,\leq\, \frac{\nu}{2}\int_{\Omega}|\partial_s T_i^{n}|^2 \,+\,|\partial_s T_e^{n}|^2drds.
\end{equation*}
Finally, we have
\begin{eqnarray*}
 && \frac{1}{2}\sum_{\alpha\in\{i,e\}}\int_{\Omega}\left[\,|T^{n+1}_\alpha|^2\,+\Delta t\,\left(\nu\,|\partial_sT^{n+1}_\alpha|^2 +K_{\perp,\alpha}\,|\partial_rT^{n+1}_\alpha|^2 \right)\,\right]\,dr\,ds\,
\\
&\leq& \frac{1}{2}\sum_{\alpha\in\{i,e\}}\int_{\Omega}\left[\,|T^{n}_\alpha|^2\,+\Delta t\,\left(\nu\,|\partial_sT^{n}_\alpha|^2 +K_{\perp,\alpha}\,|\partial_rT^{n}_\alpha|^2 \right)\,\right]\,dr\,ds\,
\\
&-& \Delta t\sum_{\alpha\in\{i,e\}}\,K_{\perp,\alpha}\,Q_{\perp,\alpha}\,\int_{0}^1 T^{n+1}_\alpha(s,0) ds.
\end{eqnarray*}
Summing over $k=0,\ldots,n$, we complete the proof.
\end{proof}

Finally space discretization is performed using the finite volume scheme presented in Section~\ref{sec:discretspace2D}.

\subsection{Numerical results}
In this section, we compare  the numerical results obtained from the implicit scheme and the IMEX scheme for \eqref{eqcoulage}. We choose $K_{\parallel,i}=2\times0.01$, $K_{\parallel,e}=1$, $K_{\bot,i}=0.01$, $K_{\bot,e}=0.01$, $\gamma_i=0$, $\gamma_e=2.5$, $Q_{\bot,i}=Q_{\bot,e}=10$ and $\beta=-0.02$. The initial temperature is such that
$$
T_i^0(s,r) \,=\, 3,\textrm{ and  }T_e^0(s,r)\,=\,3, \,(s,r)\in\Omega.
$$

The final time of the simulation is $T_{end}=1$ and the mesh size is chosen as  $n_s=100$, $n_r=100$.

We plot the electron and ion temperature and compare their ratio at different time. The aim is to compare the different behaviors between electron and ion temperatures at the edges and in the scrape-off layer of a Tokamak~\cite{Isoardi2011}. 

On the one hand, we propose in Figure~\ref{figcouplage}, the temperature evolution. On the left hand side, we present the electron temperature, whereas on the right hand side we give the ion temperature.  We first notice that the electron parallel thermal diffusivity is about $100$ times larger than the one for ions~\cite{bragi, wesson}, and the electron energy exchange ratio at the edge $r\in(0.5,1)$ depends on $O(T_e^{-3/2})$, thus the temperature has a fast decay when it is small in the scrape-off layer. However, the boundary conditions for ions in the scrape-off layer is given by the homogeneous Neumann condition $\partial_s T_i=0$, which means that  there is no energy exchange at the limiters. Thus the ion temperature does not vary significantly at scrape-off layer.

\begin{center}
\begin{figure}[htbp]
\begin{tabular}{cc}   
\includegraphics[width=7.5cm]{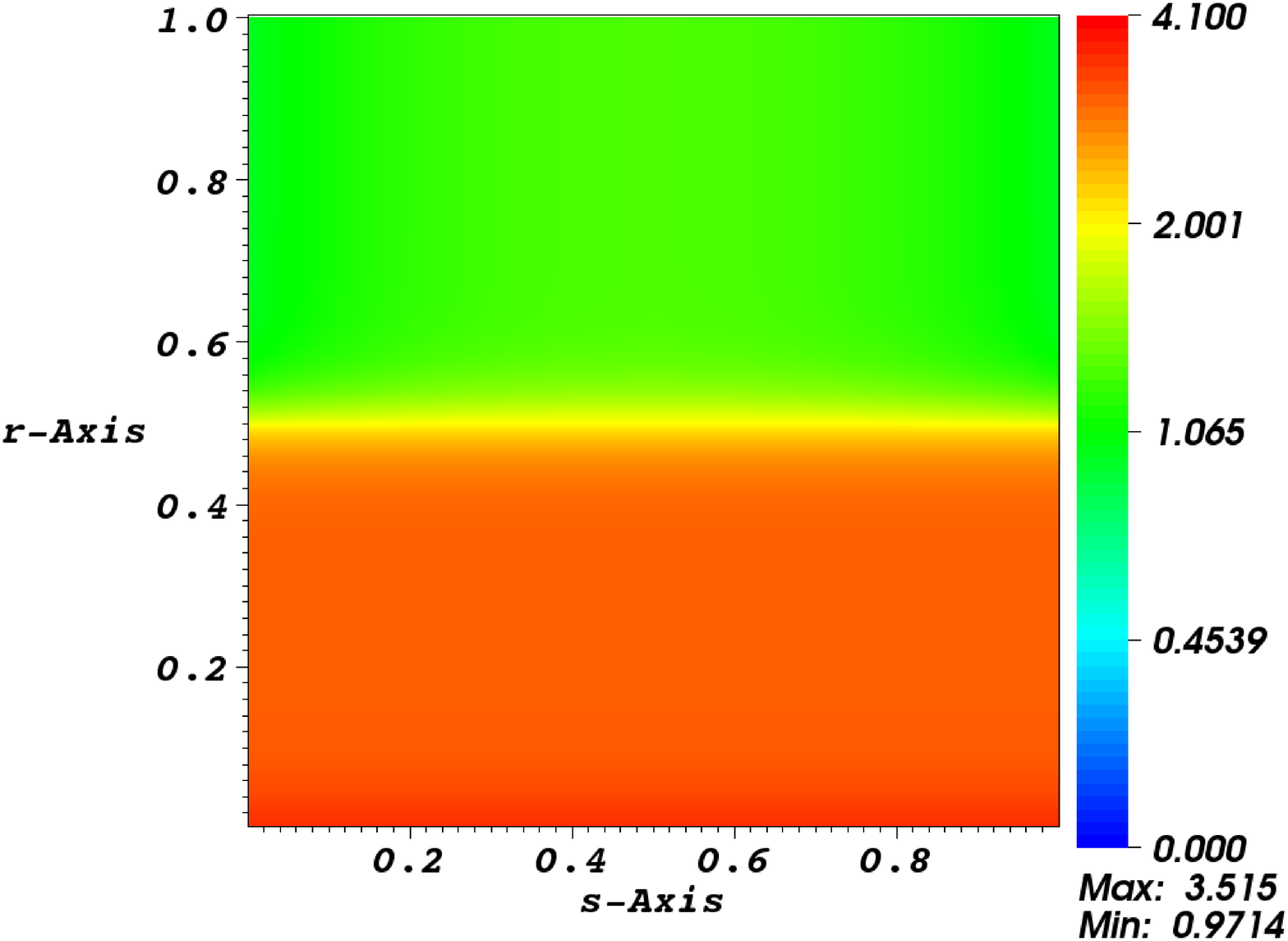}&
\includegraphics[width=7.5cm]{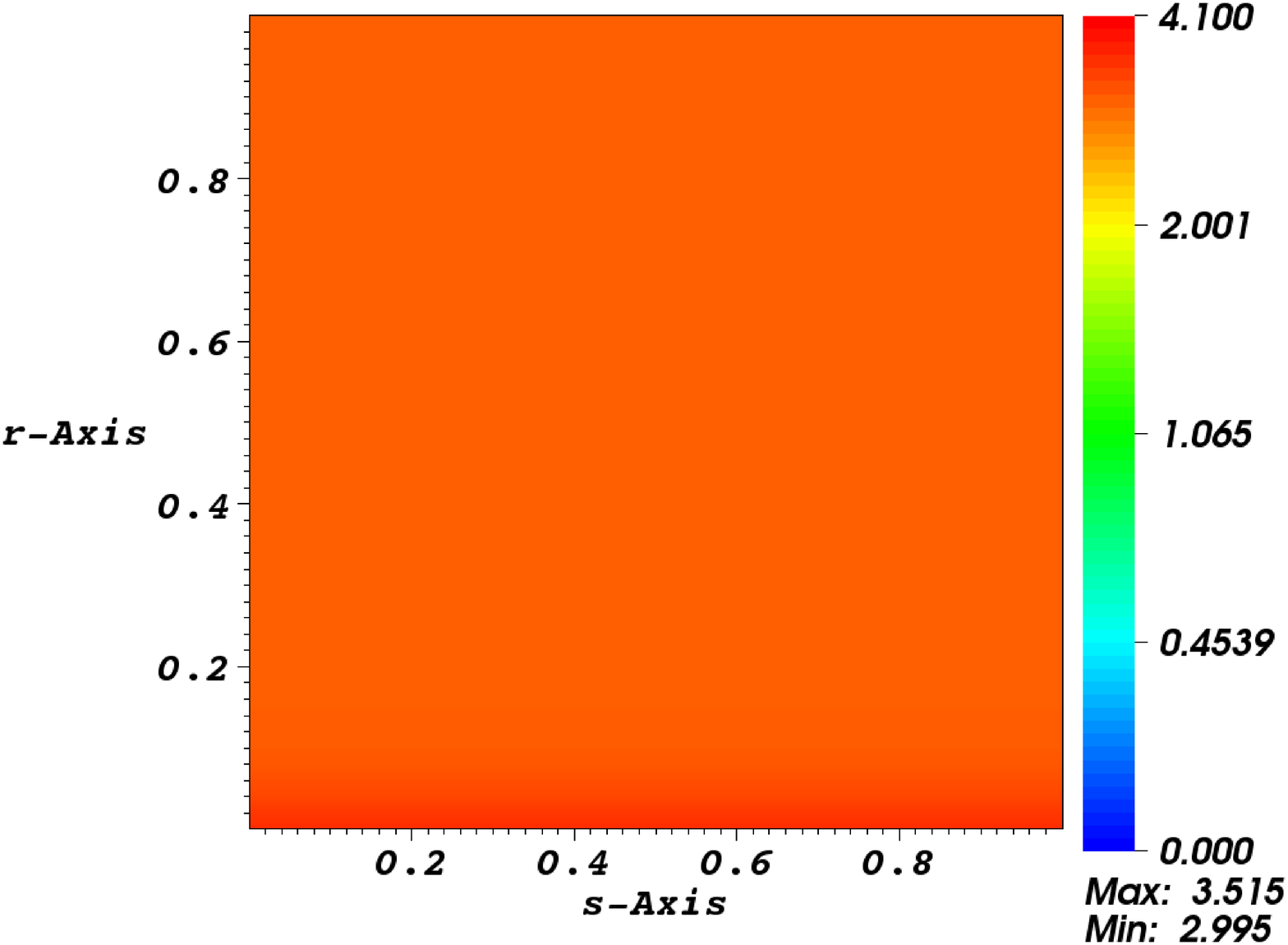}
\\
(a) $T_e$ at $t=0.25$ &
(b) $T_i$ at $t=0.25$
\\
\includegraphics[width=7.5cm]{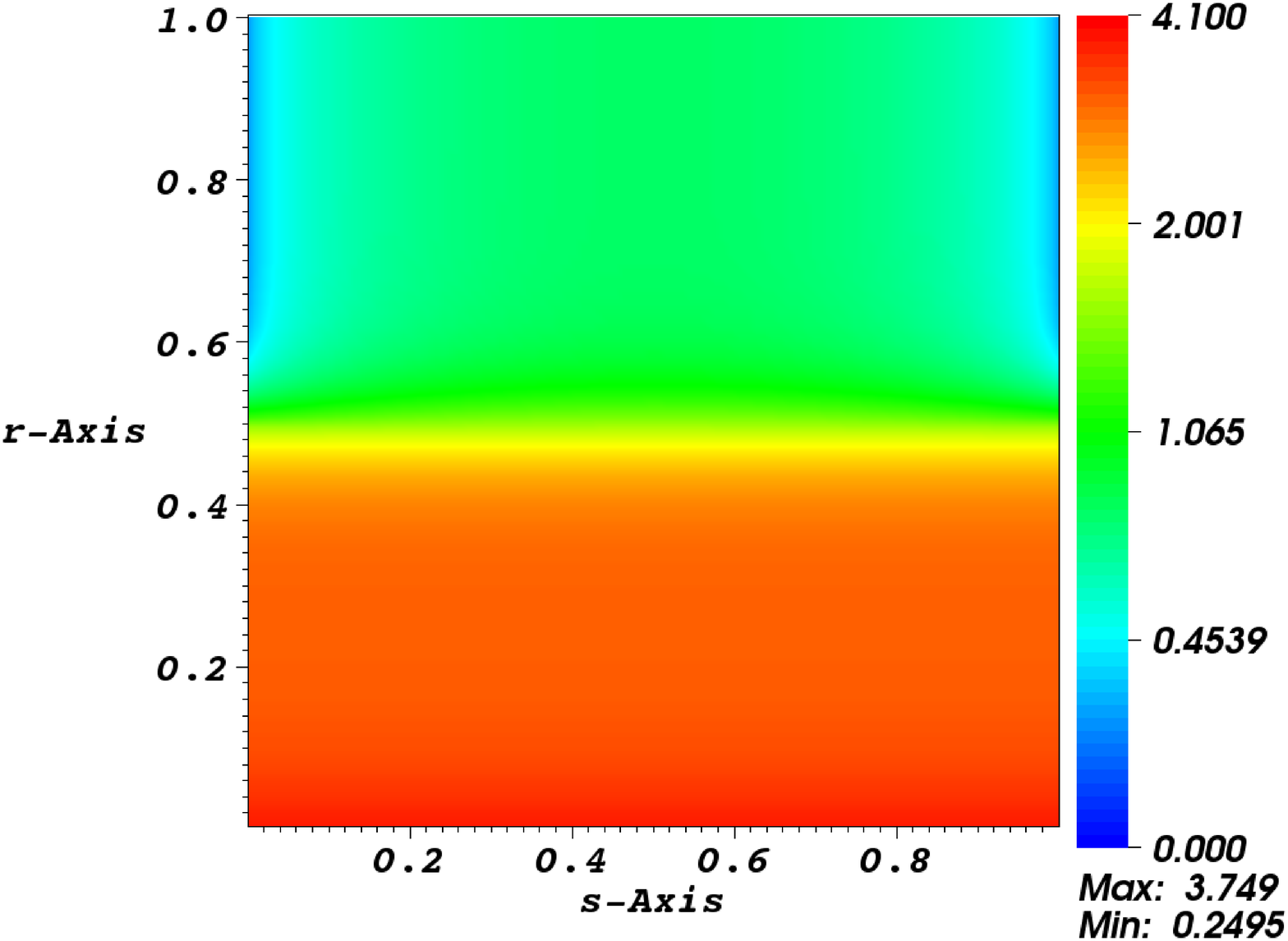}&
\includegraphics[width=7.5cm]{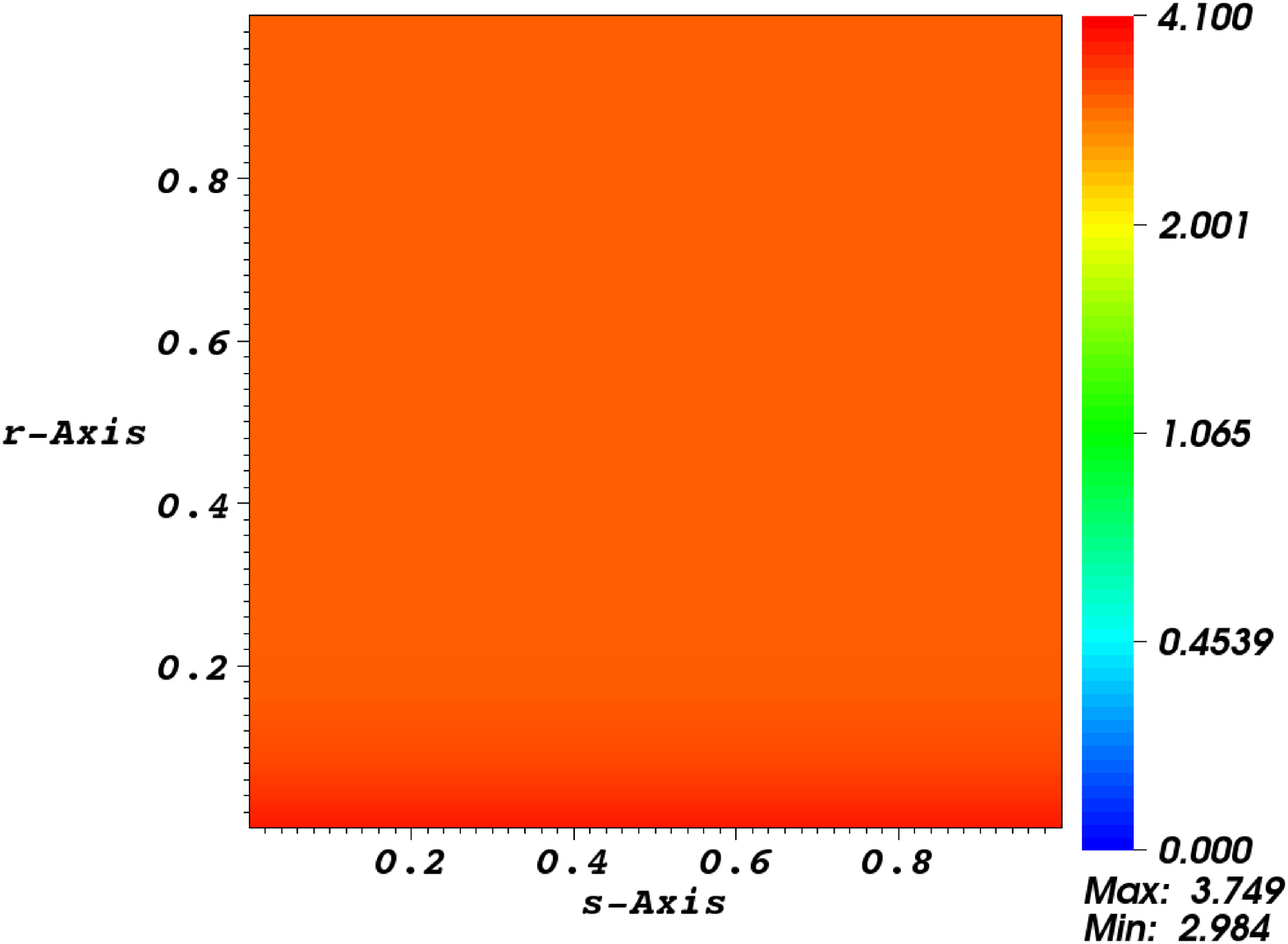}
\\
(c) $T_e$ at $t=0.5$ &
(d) $T_i$ at $t=0.5$
\\      
\includegraphics[width=7.5cm]{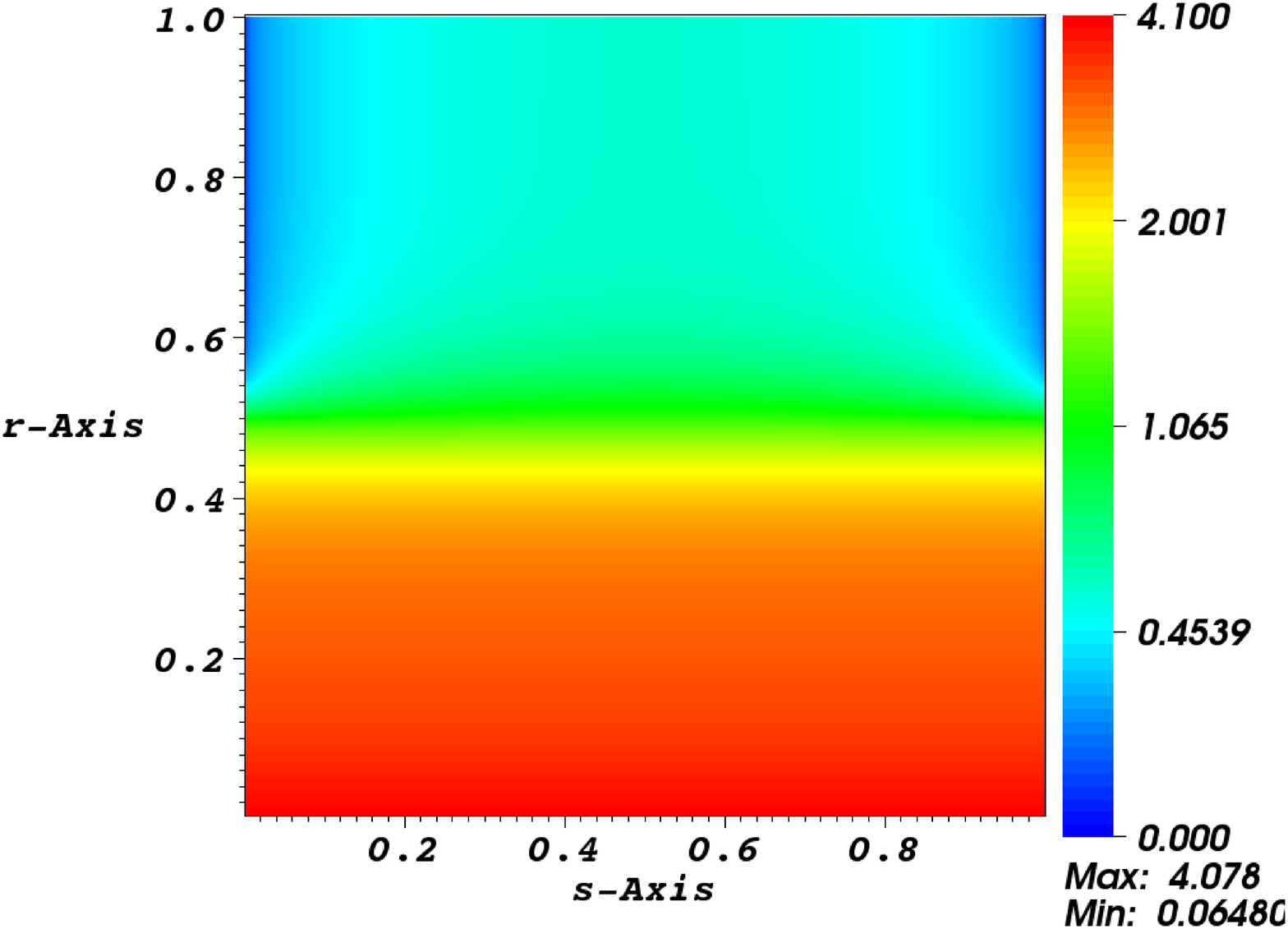}&
\includegraphics[width=7.5cm]{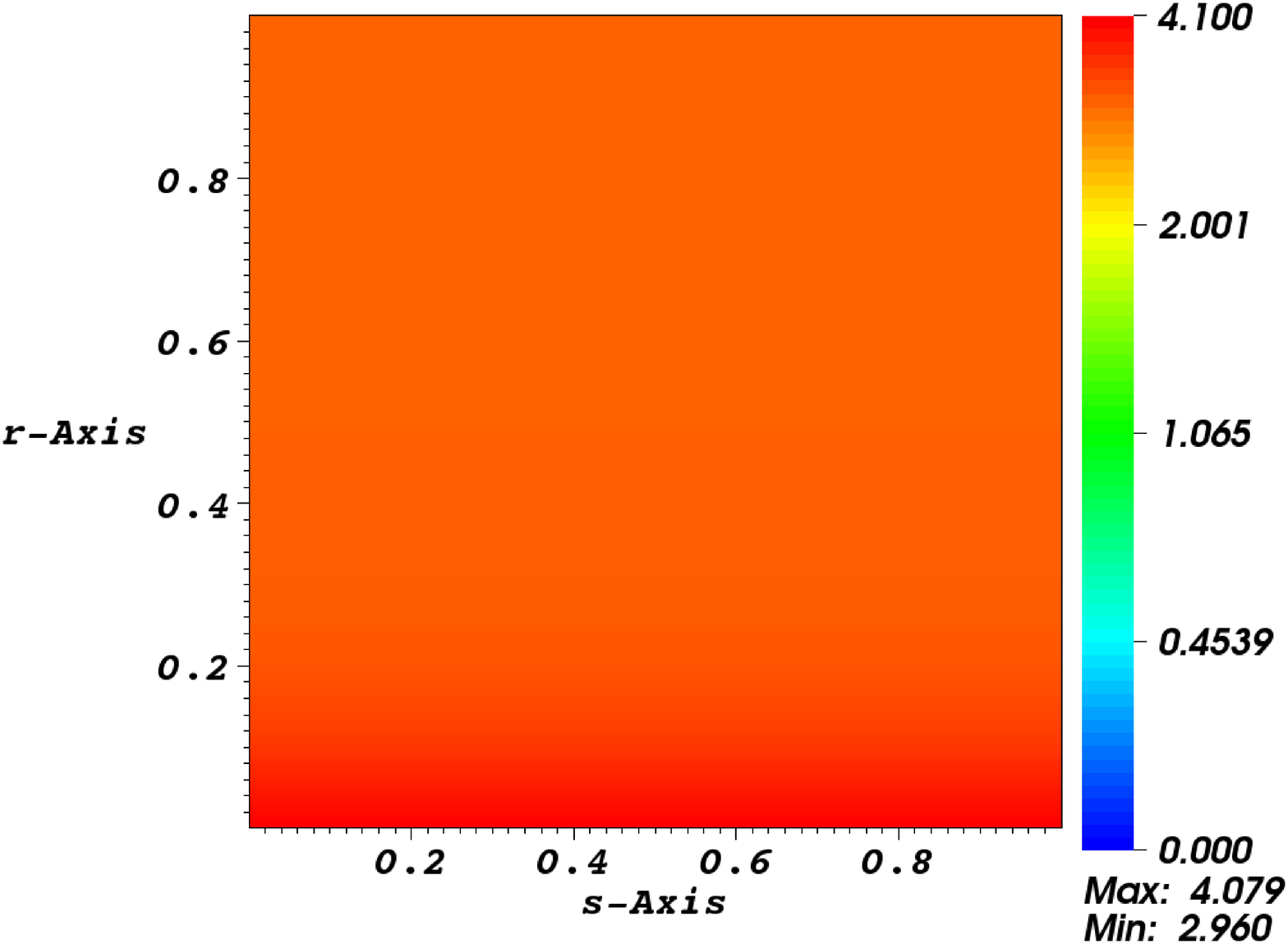}
\\
\
(d) $T_e$ at $t=1$ &
(e) $T_i$ at $t=1$
\end{tabular}
\caption{Temperature evolution  of problem (\ref{eqcoulage}).}
\label{figcouplage}
\end{figure}  
\end{center}

On the other hand,  the ratio between electron temperature and ion temperature is presented in Figure~\ref{fig2Dcoupes_couplage}. The Figure~\ref{fig2Dcoupes_couplage} illustrates that in the transition layer, the ion and electron temperatures are almost identical. However, in the scrape-off layer, at the final time $T_{end}=1$ the ratio $\tau$ becomes large around the limiters due to the boundary condition $\partial_s T_e\varpropto T_e^{-3/2}$. The evolution of the ratio $\tau$ in the radial direction is given in Figures~\ref{fig2Dcoupes_couplage}. We observe that in the transition layer the ratio $\tau$ is almost equal to 1, whereas in the scrape-off layer this ratio becomes large. For example, at time $t=1$ the ratio $\tau=6$ for $s=1/2$ while it is $\tau=45$ for $s=10^{-2}$. These behaviors correspond to the experiment results in Ko\v can {\it et al.}~\cite{Kocan2008, Kocan2009}. At last we vary the parameter $\beta$ to study the equilibrium source term in Figure~\ref{fig2Dcoupes_equilibrium} and observe that when the parameter $|\beta|$ is large,  the ratio $\tau$ decreases.
 \begin{center}
\begin{figure}[htbp]
 \begin{tabular}{cc}
\includegraphics[width=7.5cm]{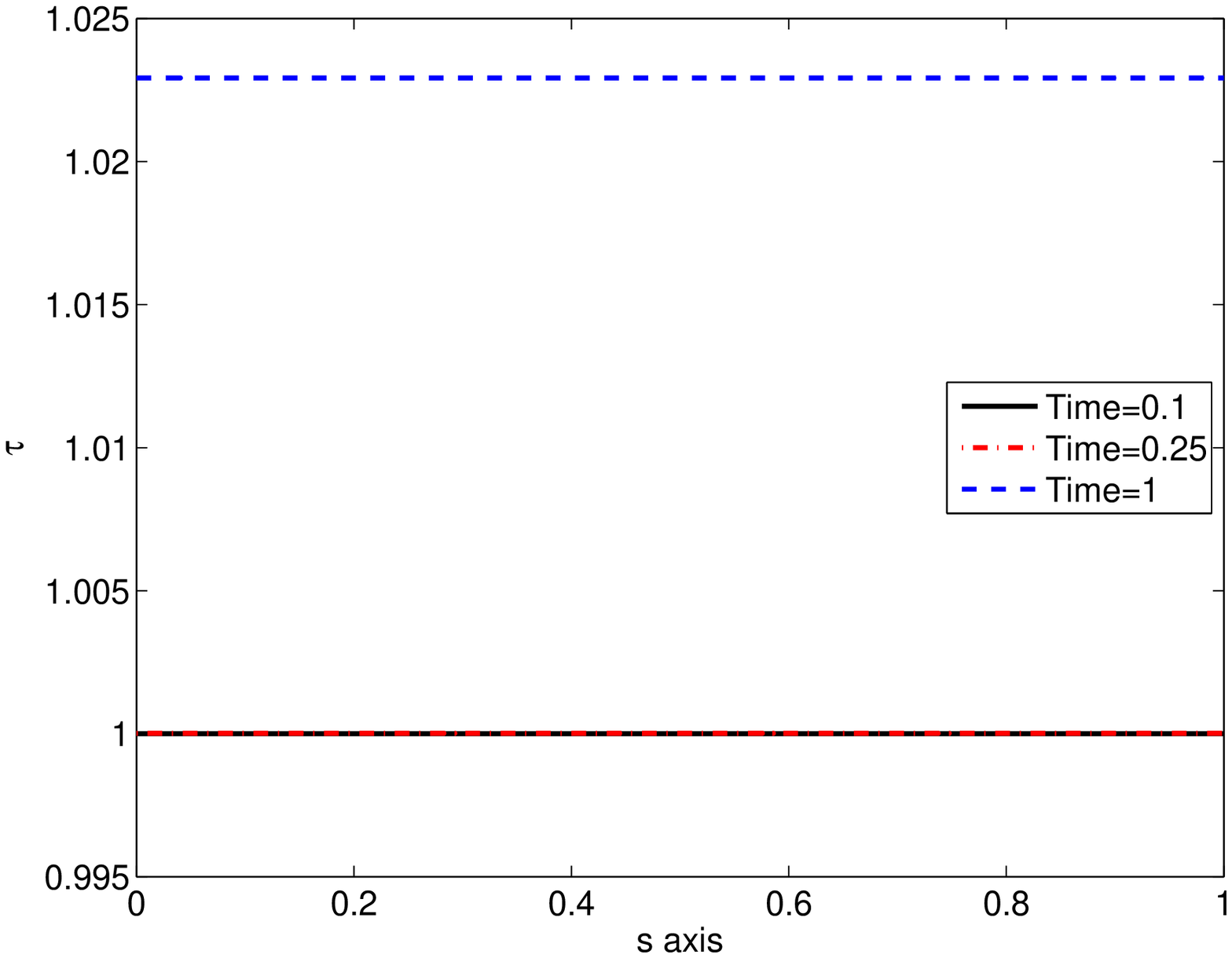}&
\includegraphics[width=7.5cm]{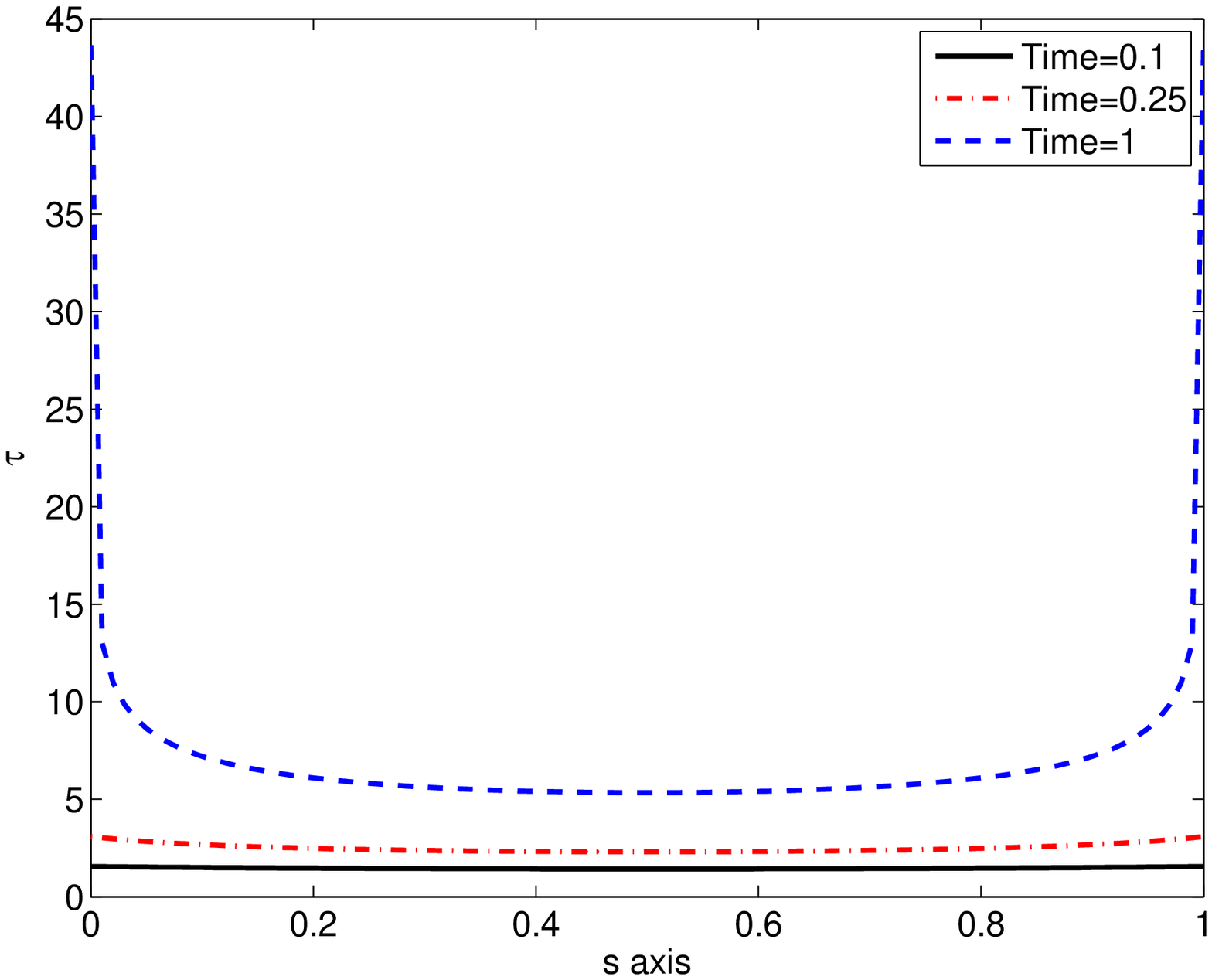}
\\
(a) $r=1/4$ &
(b) $r=3/4$   
\\
\includegraphics[width=7.5cm]{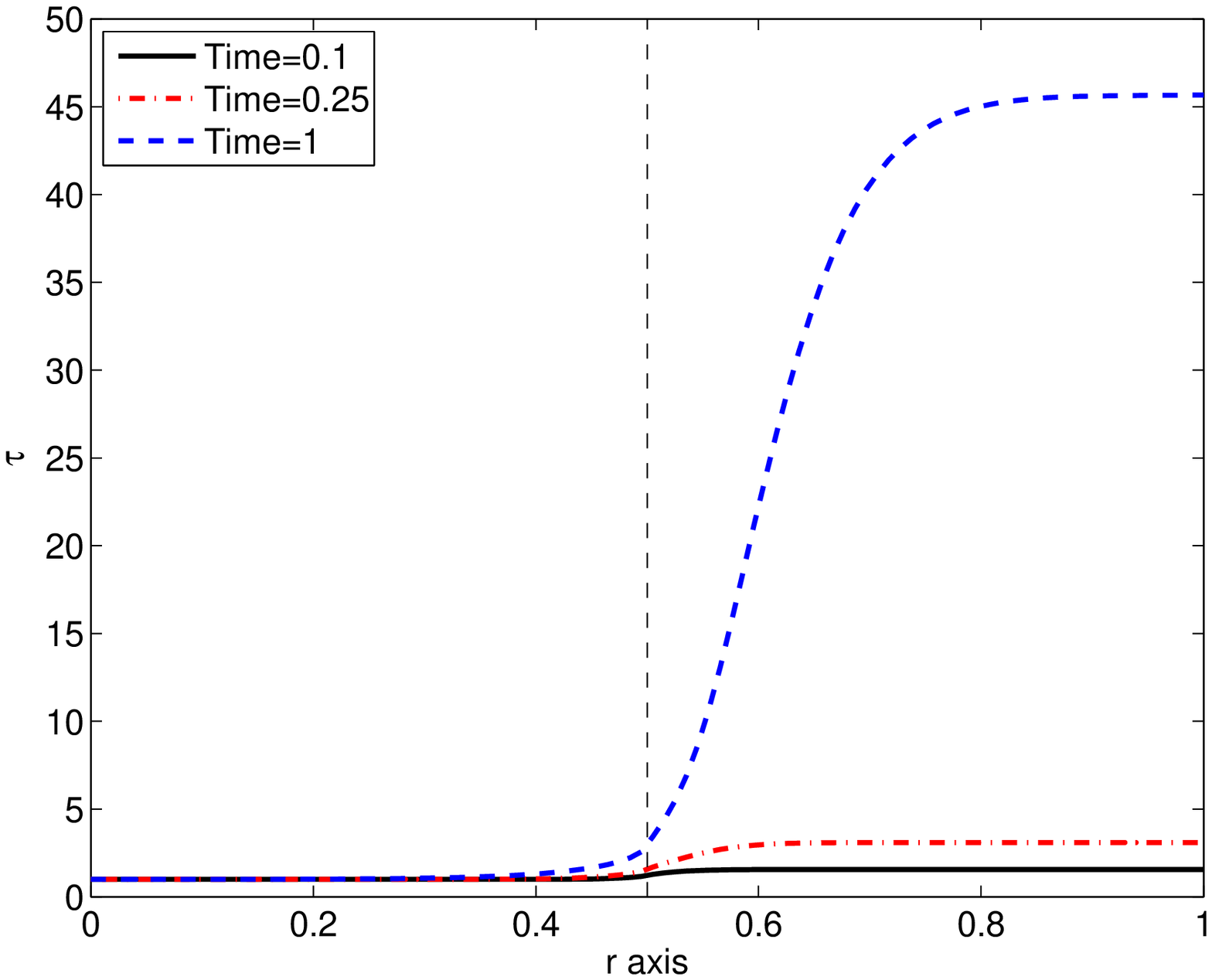} &
\includegraphics[width=7.5cm]{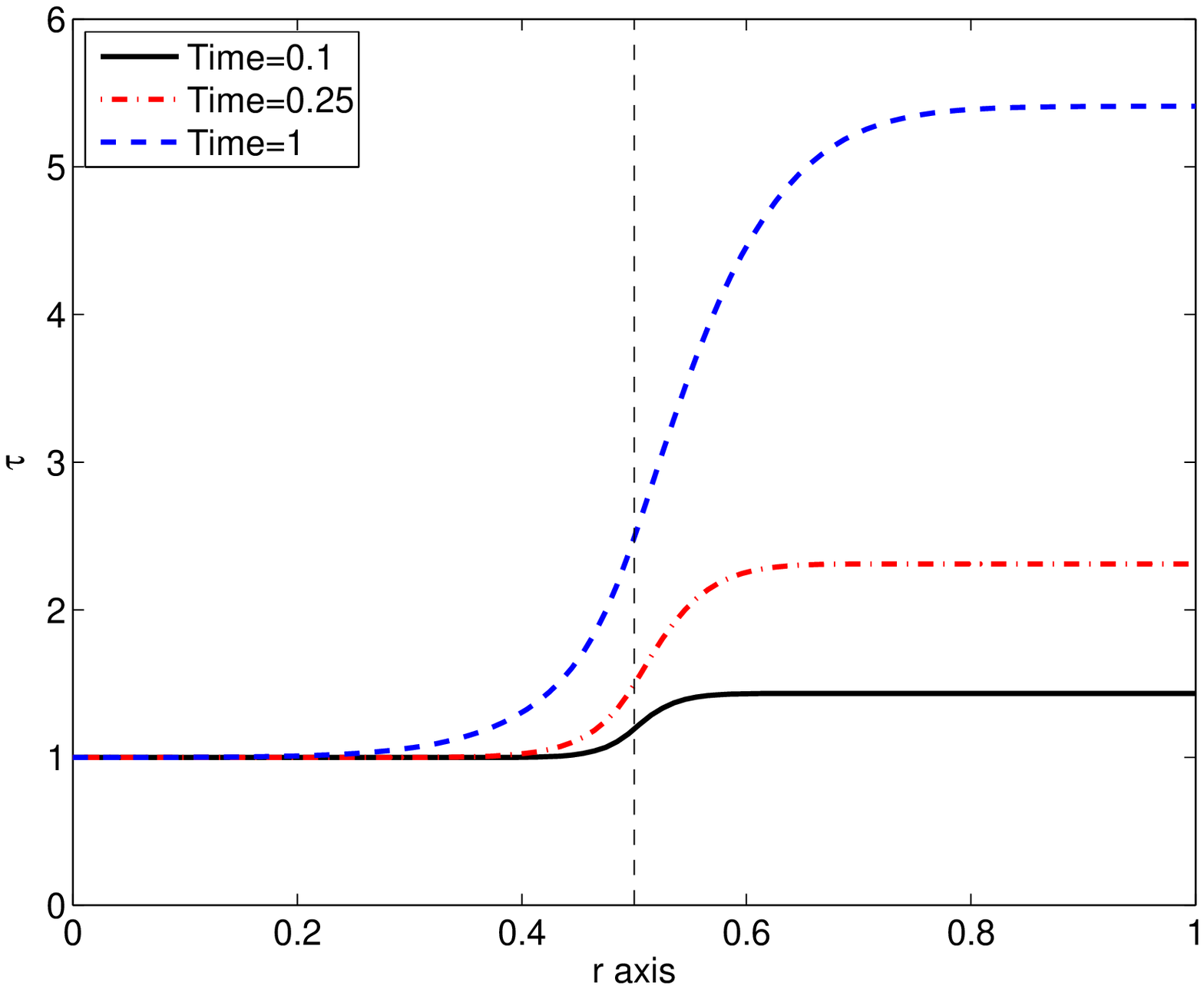}
\\
(c) $s=10^{-2}$ &
(d) $s=1/2$
\end{tabular}    
\caption{Ratio $\tau=T_i/T_e$ at section  $r=1/4$, $r=3/4$, $s=10^{-2}$ and $s=1/2$ at time $t=0.1$, $0.25$ and $1$ respectively.}
\label{fig2Dcoupes_couplage}
\end{figure}
\end{center}

\begin{center}
\begin{figure}[htbp]
\begin{tabular}{cc} 
\includegraphics[width=7.5cm]{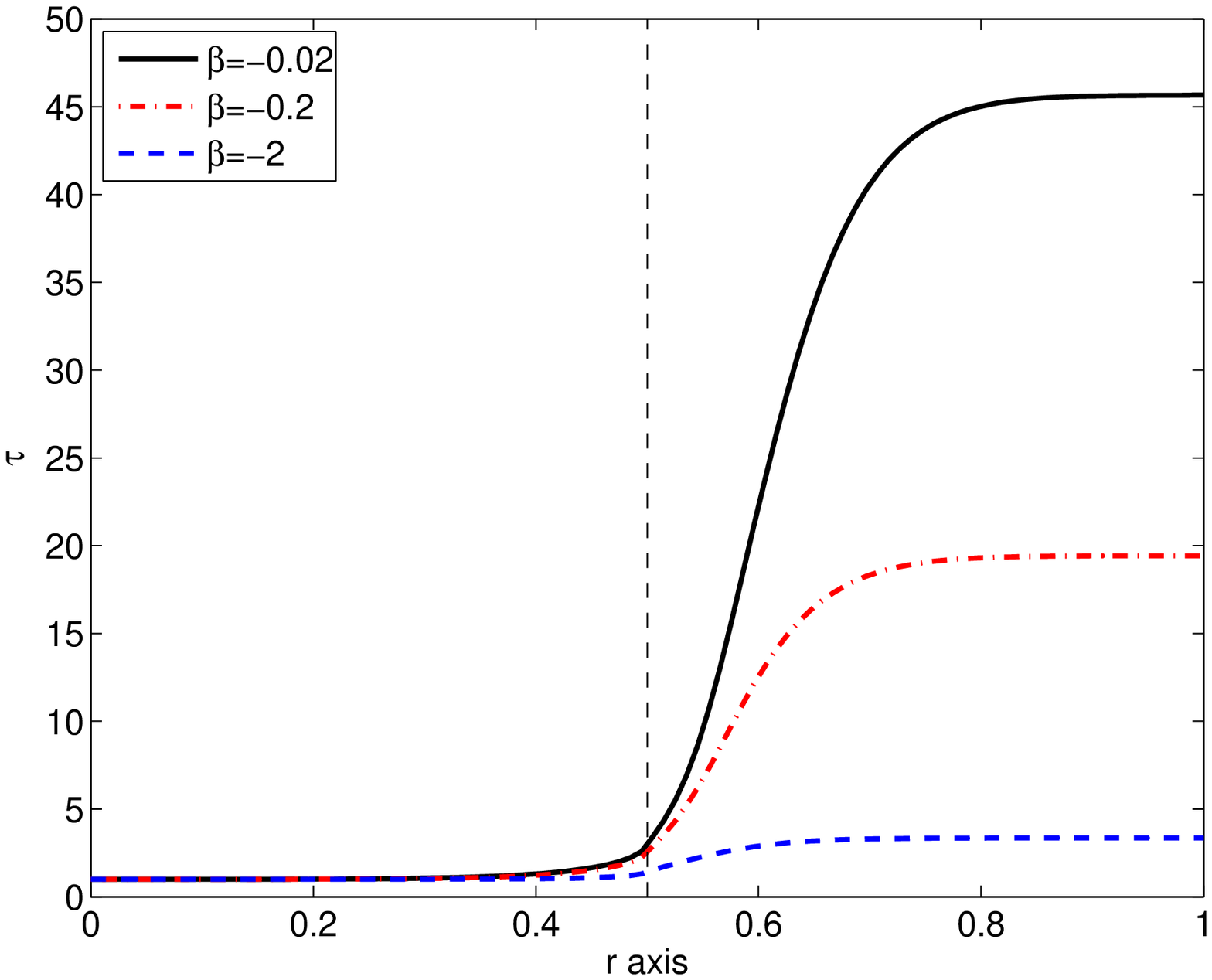}&
\includegraphics[width=7.5cm]{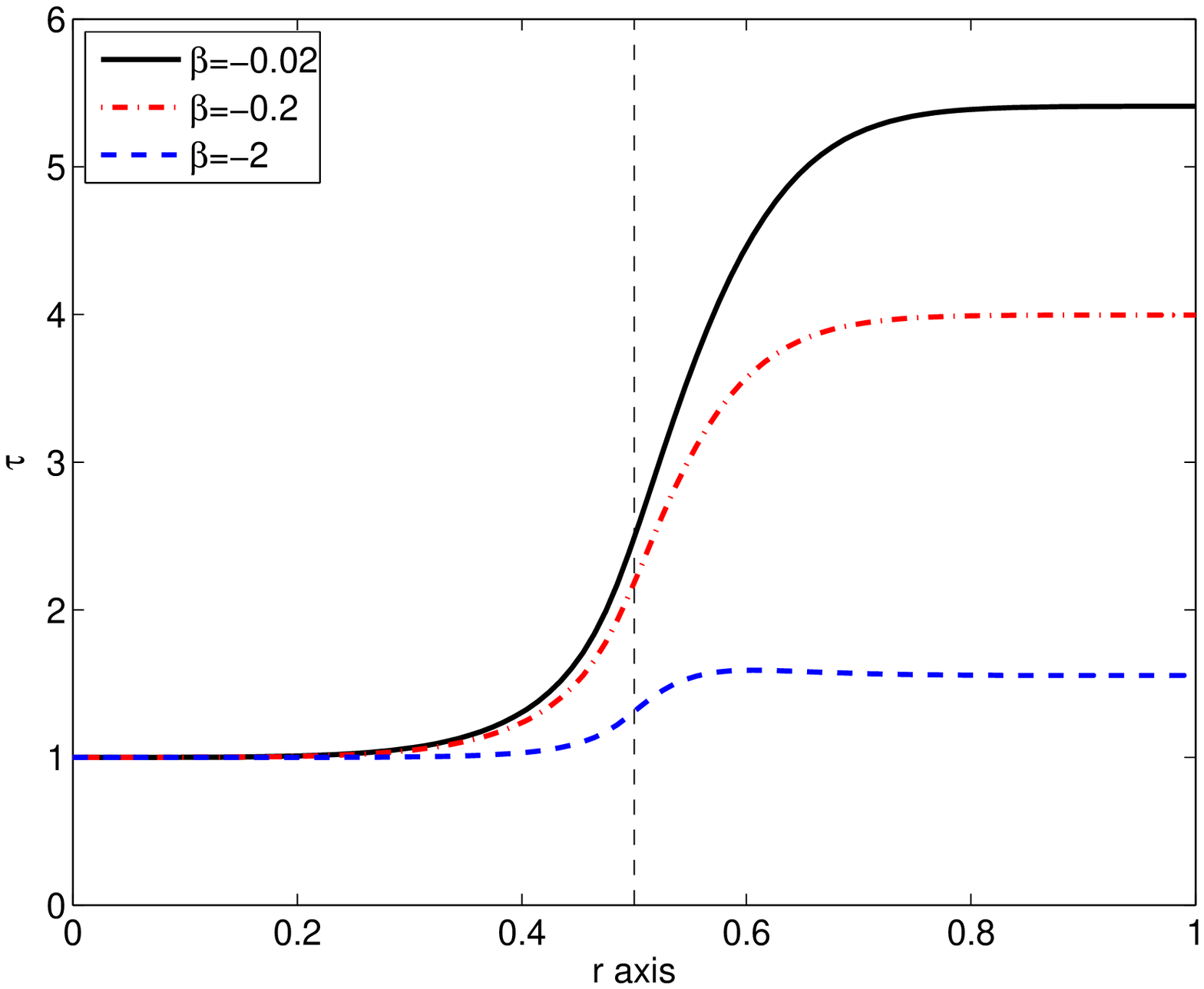}
\\
(a) $s=10^{-2}$ &
(b) $s=1/2$
\end{tabular}
\caption{Ratio $\tau=T_i/T_e$ at section $s=10^{-2}$ and $s=1/2$ for different parameters $\beta=-2\times10^{-2}$, $-2\times10^{-1}$ and $-2$ at time $t=1$.}
\label{fig2Dcoupes_equilibrium}
\end{figure}
\end{center}

\section{\bf Conclusion}
\label{sec:5}
\setcounter{equation}{0}

 We have presented various numerical approximations for a nonlinear temperature balance equation describing the heat evolution of a magnetically confined  plasma in the edge region of a tokamak. Numerical comparisons show that an IMEX scheme based on a ``smart'' decomposition of the nonlinear diffusive operator coupled with a splitting strategy gives an efficient numerical scheme in terms of accuracy, stability and reasonable computational cost. The next step would consists to couple the present model with the transport equations for the plasma density and momentum.


\bibliographystyle{plain}

\begin{flushleft} \signff \end{flushleft}
\vspace*{-44mm}
\begin{flushright} \signcn \end{flushright}
\begin{flushleft} \signcy \end{flushleft}

\end{document}